\documentclass[11pt]{article} 
\usepackage{amsfonts,amsmath,latexsym,amssymb,mathrsfs,amsthm,comment}
\usepackage[colorlinks=true, linkcolor=black, citecolor=black]{hyperref}
\usepackage{caption}
\usepackage{graphicx}
\usepackage{xcolor}
\usepackage{booktabs}

\evensidemargin0cm
\oddsidemargin0cm
\textwidth16cm
\textheight22.8cm
\topmargin-1.7cm  

%\setlength{\topmargin}{0in}
%\setlength{\headheight}{0in}
%\setlength{\headsep}{0in}
%\setlength{\footheight}{.25in}
%\setlength{\footskip}{.25in}
%\setlength{\textheight}{9in}
%\setlength{\oddsidemargin}{0in}
%\setlength{\evensidemargin}{0in}
%\setlength{\marginparsep}{0in}
%\setlength{\marginparwidth}{0in}
%\setlength{\textwidth}{6.5in}
%\def\baselinestretch{1.5}
%\begin{document}

%\newtheorem{truethm}{\indent Theorem}
%\newtheorem{lemma}[truethm]{\indent Lemma}
%\newtheorem{Lemma}{\indent Lemma}[section]
%\newtheorem{cor}{Corollary}
%\newtheorem{fact}{\indent Fact}
%\newtheorem{definition}{Definition}
%\newtheorem{Theorem}{\indent Theorem}
%\newtheorem{Corollary}{\indent Corollary}
%\def\Box{[]}

\let\OLDthebibliography\thebibliography
\renewcommand\thebibliography[1]{
  \OLDthebibliography{#1}
  \setlength{\parskip}{1pt}
  \setlength{\itemsep}{0pt plus 0.0ex}
}

%Eqn numbers within sections and theorems, lemmas ... within sections
% remember to put \setcounter{equation}{0} at start of every section

%

\def\numberlikeadb{\global\def\theequation{\thesection.\arabic{equation}}}
\numberlikeadb
\newtheorem{theorem}{Theorem}[section]
\newtheorem{lemma}[theorem]{Lemma}
\newtheorem{corollary}[theorem]{Corollary}

\newtheorem{proposition}[theorem]{Proposition}
\newtheorem{remark}[theorem]{Remark}

\usepackage{color}

\usepackage{lscape}
\usepackage{caption}
\usepackage{multirow}
\allowdisplaybreaks
\begin{document} 
\title{The variance-gamma product distribution
%The distribution of the product of $N$ independent symmetric variance-gamma random variables
}
\author{Robert E. Gaunt\footnote{Department of Mathematics, The University of Manchester, Oxford Road, Manchester M13 9PL, UK, robert.gaunt@manchester.ac.uk; siqi.li-8@postgrad.manchester.ac.uk; heather.sutcliffe@manchester.ac.uk},\: Siqi L$\mathrm{i}^{*}$  and Heather L. Sutcliff$\mathrm{e}^{*}$}

\date{} 
\maketitle

\vspace{-8mm}

\begin{abstract} We derive the exact probability density function of the product of $N$ independent variance-gamma random variables with zero location parameter. We then apply this formula to derive formulas for the cumulative distribution function and characteristic function, as well as asymptotic approximations for the density, tail probabilities and quantile function. From our general results, we deduce closed-form formulas for the density,  cumulative distribution function and characteristic function of the product of $N$ independent asymmetric Laplace random variables and mixed products of independent Laplace and centred normal random variables.
%and the product of $2N$ jointly correlated zero mean normal random variables with a block diagonal covariance matrix.  
\end{abstract}

\noindent{{\bf{Keywords:}}} Variance-gamma distribution; product distribution; Laplace distribution; normal distribution; Meijer $G$-function

\noindent{{{\bf{AMS 2010 Subject Classification:}}} Primary 60E05; 62E15; Secondary 33C60; 41A60}

\section{Introduction}

The variance-gamma (VG) distribution with parameters $m > -1/2$, $0\leq|\beta|<\alpha$, $\mu \in \mathbb{R}$, which we denote by $\mathrm{VG}(m,\alpha,\beta,\mu)$, has probability density function (PDF)
\begin{equation}\label{vgpdf} f(x) = \frac{\gamma^{2m+1}}{\sqrt{\pi}(2\alpha)^m \Gamma(m+1/2)} \mathrm{e}^{\beta (x-\mu)}|x-\mu|^{m} K_{m}(\alpha|x-\mu|), \quad x\in \mathbb{R},
\end{equation}
where $\gamma^2=\alpha^2-\beta^2$ and $K_{m} (x) =\int_0^\infty \mathrm{e}^{-x\cosh(t)}\cosh(m t)\,\mathrm{d}t$ is the modified Bessel function of the second kind. 
%The parameters have the following interpretation: $\nu$ is a shape parameter, $\alpha$ is a scale parameter, $\beta$ is a skewness parameter, and $\mu$ is a location parameter. 
The VG distribution is also referred to as the Bessel function distribution \cite{m32}, the generalized Laplace distribution \cite[Section 4.1]{kkp01} and the McKay Type II distribution \cite{ha04}. Other parametrisations can also be found in \cite{gaunt vg,kkp01,mcc98}.
%Interest in the VG distribution dates as far back as 1929 in which the VG PDF (\ref{vgpdf}) arose as thePDF of the sample covariance for a random sample drawn from a bivariate normal population \cite{p29}. 
%The VG distribution has a rich distributional theory with special and limiting cases including, amongst others, the normal, gamma and Laplace distributions, and the product of correlated zero mean normal random variables. 
Following the seminal works \cite{mcc98,madan}, the VG distribution has been widely used in financial modelling; further application areas are given in the review \cite{vg review} and Chapter 4 of the book \cite{kkp01}. In this paper, we set $\mu=0$. On further setting $\beta=0$, the PDF (\ref{vgpdf}) is symmetric about the origin, and in this paper we will refer to the $\mathrm{VG}(m,\alpha,0,0)$ distribution as the \emph{symmetric variance-gamma} distribution.

Let $X_1,\ldots,X_N$ be independent VG random variables with $X_i\sim \mathrm{VG}(m_i,\alpha_i,\beta_i,0)$, $m_i>-1/2$, $0\leq|\beta_i|<\alpha_i$, $i=1,\ldots,N$. In this paper, we obtain an exact formula for the PDF of the product $Z=\prod_{i=1}^N X_i$, which is expressed as an infinite series in terms of the Meijer $G$-function. Our formula generalises one of \cite{gl24} for the product of two independent VG random variables (the $N=2$ case) and one of \cite{gaunt algebra} for the PDF of the product $Z$ in the case $\beta_1=\cdots=\beta_N=0$ (a product of symmetric VG random variables). We derive our formula using the theory of Mellin transforms, a method which has been used to find exact formulas for the PDFs of products of many important continuous random variables; see, for example, \cite{e48,ks69,l67,s79,st66,product normal,wells}.  

In the symmetric case $\beta_1=\cdots=\beta_N=0$, the PDF of the product $Z$ reduces to a single Meijer $G$-function, whilst when $m_1,\ldots,m_N$ are half-integers the PDF of $Z$ can be represented as a finite summation of Meijer $G$-functions. In these more tractable cases, we apply our formulas for the PDF to deduce closed-form formulas for the cumulative distribution function (CDF) and characteristic function. Returning to the general case $m_i>-1/2$, $0\leq|\beta_i|<\alpha_i$, $i=1,\ldots,N$, we also derive asymptotic approximations for the PDF, tail probabilities and the quantile function. 
%In particular, this analysis implies that the distribution of the product $Z$ is unimodal with mode at the origin for the full range of parameter values when $N\geq2$. 
We state and prove these results in Section \ref{sec2}. 

The $\mathrm{VG}(1/2,\alpha,\beta,0)$ distribution corresponds to the asymmetric Laplace distribution (with zero location parameter) and the classical symmetric Laplace distribution is obtained from further setting $\beta=0$.
%; see the excellent book \cite{kkp01} for a comprehensive account of the distributional theory and application areas. 
Thus from our general results of Section \ref{sec2} we obtain formulas for the PDF, CDF and characteristic function of the product of $N$ independent asymmetric Laplace random variables. These formulas (which are given in Section \ref{sec3.1}) fill a surprising gap in the literature, as such formulas were previously only available in \cite{gl24,nad07} for the $N=2$ case. 
%Surprisingly, these formulas (which are given in Section \ref{sec3.1}) appear to be new, with such formulas previously only available in \cite{gl24,nad07} for the $N=2$ case. 
The product of two independent zero mean normal random variables is also VG distributed \cite{gaunt vg}, and in Section \ref{sec3.2} we deduce new results for the distribution of mixed products of independent Laplace and zero mean normal random variables. 
More generally, the product of two correlated zero mean normal random variables is VG distributed \cite{gaunt prod}, and so we are also able to deduce an exact formula for the PDF of the product of $2N$ jointly correlated zero mean normal random variables with a block diagonal covariance matrix (see Section \ref{sec3.3}). 
%in Section \ref{sec3.3} 
%we derive new results for the product of $2N$ jointly correlated zero mean normal random variables with a block diagonal covariance matrix. 

\vspace{2mm}

\noindent \emph{Notation.} 
%We will also use the following notation. 
We let $S_N$ denote the power set of $\{1,\ldots,N\}$, that is the set of all subsets of $\{1,\ldots,N\}$. We let $\sigma\in S_N$ and define its complement  $\sigma^{c}=\{1,\ldots,N\}\setminus\sigma$. We further let $S_N^+=\{\sigma\in S_N\,:\,\text{$|\sigma|$ is even}\}$ and $S_N^-=\{\sigma\in S_N\,:\,\text{$|\sigma|$ is odd}\}$.
%We further let $S_N^+\subset S_N$ be the set of all subsets of $\{1,\ldots,N\}$ with an even number of elements and let $S_N^-\subset S_N$ be the set of all subsets of $\{1,\ldots,N\}$ with an odd number of elements. 
Also, $\mathrm{sgn}(x)$ denotes the sign function, $\mathrm{sgn}(x)=-1$ for $x<0$, $\mathrm{sgn}(0)=0$, $\mathrm{sgn}(x)=1$ for $x>0$. Lastly, we use the convention that the empty product is set to 1.
%given by $\prod_{i=1}^0a_i=1$

In order to simplify formulae, we let $\eta = 1/\prod^N_{i=1} \Gamma(m_i+1/2)$ and $\xi=\prod^N_{i=1} \alpha_i$.
%$\alpha =(\prod^N_{i=1} \alpha_i)^{1/N}$, so that $\alpha^N=\prod^N_{i=1} \alpha_i$. 
We also let $\gamma_i^2 = \alpha_i^2-\beta_i^2$ and $\lambda_i^{\pm}=\alpha_i\pm\beta_i$, $i=1,\ldots,N$. Finally,  for $\sigma\in S_N$, we let $\omega_\sigma=\prod_{k \in \sigma}\prod_{l \in \sigma^c}(\alpha_k+\beta_k)(\alpha_l-\beta_l)$. In the case $(\alpha_i,\beta_i)=(\alpha,\beta)$ for $i=1,\ldots,N$, we have $\omega_\sigma=(\alpha+\beta)^{|\sigma|}(\alpha-\beta)^{N-|\sigma|}$.

\section{The VG product distribution
%The distribution of the product of $N$ independent variance-gamma random variables
}\label{sec2}

\subsection{Probability density function}\label{sec2.1}
We begin by obtaining an exact formula for the PDF of the product $Z = \prod_{i=1}^{N}X_i$. The formula is expressed as an infinite series given in terms of the Meijer $G$-function (defined in Appendix \ref{appa}). 
%Plots of the PDF are given in Figures \ref{fig1} and \ref{fig2}. The left and right panels of Figure \ref{fig1} show the effect of varying the number of products $N$ and the shape parameter $m$, respectively. Figure \ref{fig2} illustrates the effect of varying the skewness parameter $\beta$. In all plots, $\alpha_i=1$, $i=1,\ldots,N$. 

%The Mellin transform of the variance gamma distribution can be written as an infinite summation involving gamma functions. (\textcolor{blue}{Please see the main results in \cite{gl24}.}) Its inverse transform allows us to generalise the case for the product of $k$ independent variance-gamma distributions, resulting in $k$ infinite summations. We denote the following constants to organize coefficients among the summations. Let $S_N$ be the set of all subsets of $\{1,\ldots,N\}$. For a random variable $V$, we define its positive part $V^+$ by $V^+=\max\{V,0\}$ and its negative part $V^-$ by $V^-=\min\{-V,0\}$. For ease of exposition, let $\alpha =\prod^k_{i=1} \alpha_i$, $\eta = \prod^k_{i=1} \frac{1}{\Gamma(m_i+\frac{1}{2})} $ and $\gamma_i^2 = \alpha_i^2-\beta_i^2$ for $i=1,\ldots,N.$

%We denote $S_k^+  \subset S $ such that its complement $(S_k^+)^c$ has an even number of elements. Similarly, let $S_k^-  \subset S$ such that $(S_k^-)^c$ has odd umber of elements. For ease of exposition, let $\alpha =	 \prod^k_{i=1} \alpha_i$, $\eta = \prod^k_{i=1} \frac{1}{\Gamma(m_i+\frac{1}{2})} $ and $\gamma_i^2 = \alpha_i^2-\beta_i^2$ for $i=1,\ldots,k.$

\begin{theorem}\label{thm1}
	Let $X_1,\ldots,X_N$ be $N$ independent VG distributed random variables, where $X_i \sim \mathrm{VG}(m_i,\alpha_i,\beta_i,0)$ with $m_i> -1/2$ and $0 \leq |\beta_i| < \alpha_i $, for $i=1,\ldots,N$. Let $Z = \prod_{i=1}^{N}X_i$. Then
	\begin{align}
		f_Z(z) & = \frac{\eta}{2^{2N-1}\pi^{N/2}}\bigg\{\prod_{i=1}^{N}\frac{\gamma_i^{2m_i+1}}{\alpha_i^{2m_i}}\bigg\}
		\sum^\infty_{j_1=0} \ldots \sum^\infty_{j_N=0} \frac{(2\beta_{1}/\alpha_1)^{j_1} }{ j_1! } \ldots \frac{(2\beta_N/\alpha_N)^{j_N}}{j_N!} a_{j_1,\ldots,j_N}(z) \nonumber
		\\ & \quad \times G^{2N,0}_{0,2N} \bigg( \frac{\xi^{2}}{4^N}z^2\, \bigg|\, { - \atop   \tfrac{j_1}{2},\ldots, \tfrac{j_N}{2}, m_1+\tfrac{j_1}{2}, \ldots, m_N+\tfrac{j_N}{2} }   \bigg),\quad z\in\mathbb{R}, \label{eq:1}
	\end{align}
	where $a_{j_1,\ldots,j_N}(z)=\sum_{\sigma \in S_N^+ } \prod_{k \in \sigma} (-1)^{j_k}$ for $z>0$ and $a_{j_1,\ldots,j_N}(z)=\sum_{\sigma \in S_N^-} \prod_{k \in \sigma} (-1)^{j_k}$ for $z<0$.
\end{theorem}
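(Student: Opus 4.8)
The plan is to use Mellin transforms. First I would compute the Mellin transform of a single $\mathrm{VG}(m_i,\alpha_i,\beta_i,0)$ random variable, or rather of its absolute value restricted to the positive and negative half-lines separately. Since the VG density involves $\mathrm{e}^{\beta x}|x|^m K_m(\alpha|x|)$, the natural move is to expand $\mathrm{e}^{\beta_i x}$ as a power series $\sum_{j_i\ge 0}(\beta_i x)^{j_i}/j_i!$; this converts each half-line contribution into a sum of terms of the form $\int_0^\infty x^{s-1+j_i+m_i}K_{m_i}(\alpha_i x)\,\mathrm{d}x$, each of which is a standard Mellin–Barnes integral for the Bessel $K$ function evaluable in terms of Gamma functions (it produces a product $\Gamma(\tfrac{s+j_i}{2})\Gamma(\tfrac{s+j_i}{2}+m_i)$ up to explicit powers of $2$ and $\alpha_i$). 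I would track the sign bookkeeping carefully: the factor $(-1)^{j_i}$ appears precisely when the contribution comes from the negative half-line, since there $x$ is replaced by $-|x|$ in $x^{j_i}$.

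Next, I would use the multiplicativity of the Mellin transform under products of independent random variables: $\mathcal{M}[f_Z](s) = \prod_{i=1}^N \mathcal{M}[X_i](s)$, but since $Z$ can be positive or negative I would split according to the sign of $Z$, which is the sign of the product of the signs of the $X_i$. Collecting the $2^N$ sign patterns: $Z>0$ corresponds to an even number of negative factors (the sets $\sigma\in S_N^+$ recording which factors are negative) and $Z<0$ to an odd number ($\sigma\in S_N^-$). Summing the corresponding products of half-line Mellin transforms, with the $(-1)^{j_k}$ arising for $k\in\sigma$, is exactly what produces the combinatorial coefficient $a_{j_1,\ldots,j_N}(z)$. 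After this step the Mellin transform of $f_Z$ on $z>0$ (and separately $z<0$) is an explicit series in $j_1,\ldots,j_N$ whose generic term is a constant times $\bigl(\tfrac{\xi^2}{4^N}\bigr)^{-s/2}\prod_i \Gamma(\tfrac{s+j_i}{2})\Gamma(\tfrac{s+j_i}{2}+m_i)$.

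Then I would invert the Mellin transform term by term. A product of $2N$ Gamma functions of the form $\prod \Gamma(\tfrac{s}{2}+b_\ell)$ inverts, via the Mellin–Barnes representation, to a Meijer $G$-function $G^{2N,0}_{0,2N}$ with lower parameters $b_\ell$ and argument a power of $z$; the substitution $s\mapsto s/2$ (equivalently, working with $z^2$ as the argument) accounts for the $z^2$ appearing in \eqref{eq:1} and for the lower parameters being $\tfrac{j_i}{2}$ and $m_i+\tfrac{j_i}{2}$. Assembling the constants — the prefactors $\gamma_i^{2m_i+1}/(\sqrt\pi(2\alpha_i)^{m_i}\Gamma(m_i+1/2))$ from each VG density, the powers of $2$ from the Bessel Mellin integrals, and $\eta$, $\xi$ — gives the stated normalisation $\eta/(2^{2N-1}\pi^{N/2})$ times $\prod_i \gamma_i^{2m_i+1}/\alpha_i^{2m_i}$, together with the factors $(2\beta_i/\alpha_i)^{j_i}/j_i!$.

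The main obstacle I anticipate is rigour in the interchange of limits: one must justify swapping the ($N$-fold) infinite summation over the $j_i$ with the Mellin inversion integral, and confirm that the resulting double/iterated series converges and indeed represents $f_Z$ pointwise for all $z\in\mathbb{R}$ (including controlling behaviour near $z=0$, where the Meijer $G$-function has a known logarithmic/algebraic singularity structure). This requires absolute-convergence estimates on the Bessel moments and on the $\Gamma$-function products along the Bronwich contour, uniformly in the $j_i$, using $|\beta_i|<\alpha_i$ to get geometric decay in each $j_i$. The sign/parity combinatorics leading to $a_{j_1,\ldots,j_N}(z)$ is routine once set up, as is the constant-chasing, though the latter is the most error-prone bookkeeping.
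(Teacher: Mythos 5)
Your proposal is correct and follows essentially the same route as the paper: Mellin transforms of the positive and negative parts of each $X_i$ (obtained by expanding $\mathrm{e}^{\beta_i x}$ and using the Mellin--Barnes integral for $K_{m_i}$, which is exactly how the formulas quoted from \cite{gl24} arise), the decomposition of $\mathcal{M}_{f_{Z^{\pm}}}$ over sign patterns $\sigma\in S_N^{\pm}$ yielding the coefficients $a_{j_1,\ldots,j_N}(z)$, and term-by-term Mellin inversion via the contour-integral definition of $G^{2N,0}_{0,2N}$. The paper does not dwell on the interchange-of-limits justification you flag; otherwise the two arguments coincide.
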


\begin{remark}\label{firstrem}
In the case of a product of independent symmetric VG random variables ($\beta_1=\cdots=\beta_N=0$), the PDF $f_Z(z)$ is symmetric about the origin and reduces to a single $G$-function:
	\begin{equation}
		f_Z(z) = \frac{\xi \eta}{2^N \pi^{N/2}} G^{2N,0}_{0,2N} \bigg( \frac{\xi^{2}}{4^N}z^2\, \bigg|\, \begin{aligned} - \atop  0, \ldots,0, \, m_1,\ldots, m_N  \end{aligned}   \bigg),\quad z\in\mathbb{R},\label{simp}
	\end{equation}
    a formula that had earlier been derived by \cite{gaunt algebra}.
	When $N-1$ of the skewness parameters are equal to zero, the PDF (\ref{eq:1}) simplifies to a single infinite series. Without loss of generality, set $\beta_2=\cdots=\beta_N=0$. Then, for $z\in\mathbb{R}$, 
	\begin{align*}
		f_Z(z)
        %&=\frac{\gamma_1^{2m_1+1}\alpha_2\cdots\alpha_N\eta}{2^{2N-1}\pi^{N/2}\alpha_1^{2m_1}}\sum_{j=0}^{\infty}\bigg(\frac{2\beta_1}{\alpha_1}\bigg)^{j}\frac{b_{j}(z)}{j!}G^{2N,0}_{0,2N} \bigg( \frac{\alpha^2}{4^N}z^2\, \bigg| { - \atop \tfrac{j}{2},0,\ldots,0,m_1+\tfrac{j}{2},m_2, \ldots, m_N }   \bigg)\\
        &=\frac{\gamma_1^{2m_1+1}\alpha_2\cdots\alpha_N\eta}{2^{N}\pi^{N/2}\alpha_1^{2m_1}}\sum_{k=0}^{\infty}\frac{1}{(2k)!}\bigg(\frac{2\beta_1}{\alpha_1}\bigg)^{2k}G^{2N,0}_{0,2N} \bigg( \frac{\xi^2}{4^N}z^2\, \bigg|\, { - \atop k,0,\ldots,0,m_1+k,m_2, \ldots, m_N }   \bigg),
	\end{align*}
	where we used that $
    %b_{j}(z):=
    a_{j,0,\ldots,0}(z)=2^{N-2}(1+(-1)^{j})$. More generally, if $N-k$ of the skewness parameters are equal to zero then the PDF (\ref{eq:1}) reduces to a $k$-fold infinite series.
	\begin{comment}
	When we set $N=2$ in the PDF (\ref{eq:1}) our formula reduces to
    \begin{align}
f_Z(z)  =  \frac{\gamma_1^{2m_1+1} \gamma_2^{2m_2+1} }{4 \pi  \alpha_1^{2m_1} \alpha_2^{2m_2}\Gamma(m_1+1/2) \Gamma(m_2+1/2)  } &\sum^\infty_{i=0} \sum^\infty_{j=0} \frac{b_{ij}(\mathrm{sgn}(z))^i}{i!j!} \bigg(\frac{2\beta_1}{\alpha_1}\bigg)^i\bigg(\frac{2\beta_2}{\alpha_2}\bigg)^j \nonumber
\\ &  \times G^{4,0}_{0,4} \bigg( \frac{\alpha_1^2 \alpha_2^2}{16}z^2\, \bigg| \, { - \atop \frac{i}{2}, \frac{j}{2},m_1+\frac{i}{2},m_2+\frac{j}{2}  }   \bigg), \quad z\in\mathbb{R}, \nonumber
\end{align}
where $b_{ij}=(1+(-1)^{i+j})/2$, $i,j\geq0$, so that $b_{ij}=1$ if $i+j$ is even, and $b_{ij}=0$ if $i+j$ is odd. This formula can be easily rewritten as 
\begin{align}
f_Z(z)  =  \frac{\gamma_1^{2m_1+1} \gamma_2^{2m_2+1} }{4 \pi \alpha_1^{2m_1} \alpha_2^{2m_2} \Gamma(m_1+1/2) \Gamma(m_2+1/2)  } \sum^\infty_{i=0} \sum^{2i}_{j=0} \frac{(\mathrm{sgn}(z))^j}{j!(2i-j)!} \bigg(\frac{2\beta_1}{\alpha_1}\bigg)^j\bigg(\frac{2\beta_2}{\alpha_2}\bigg)^{2i-j} &\nonumber
\\   \times G^{4,0}_{0,4} \bigg( \frac{\alpha_1^2 \alpha_2^2}{16}z^2\, \bigg|\, { - \atop \frac{j}{2}, i-\frac{j}{2},m_1+\frac{j}{2},m_2+i-\frac{j}{2}  }   \bigg),& \quad z\in\mathbb{R},\nonumber
\end{align}
which we recognise as the PDF of \cite[Theorem 2.1]{gl24} for the product of two independent VG random variables.	
\end{comment}
\end{remark}

%\vspace{3mm}
%\noindent{\emph{Proof of Theorem \ref{thm1}.
%}}
\begin{proof}
We begin by considering the case $z>0$. We will need the following notation. For a random variable $V$, we denote its positive and negative parts $V^+$ and $V^-$ by $V^+=\max\{V,0\}$ and $V^-=\max\{-V,0\}$. Recall that $Z=\prod_{i=1}^NX_i$, where $X_1,\ldots,X_N$ are independent with $X_i \sim \mathrm{VG}(m_i,\alpha_i,\beta_i,0)$, $i=1,\ldots,N$. The Mellin transform of $Z^+$ is then given by 
%We employ a similar technique to find Mellin transforms for each $X_i^+$ and $X_i^-.$ We contribute to this proof by noticing that 
\begin{align}
	\mathcal{M}_{f_{Z^+}}(s) &= \sum_{\sigma \in S_N^+} \bigg\{\prod_{k \in \sigma} \prod_{l \in \sigma^c }\mathcal{M}_{f_{X_k^-}}(s) \mathcal{M}_{f_{X_l^+}}(s)\bigg\}, \label{eq:2}
\end{align}
where the Mellin transforms for $X_l^+$ and $X_k^-$, which were obtained by \cite{gl24}, are given by
\begin{align}
\mathcal{M}_{f_{X_l^+}}(s)&=\frac{(\gamma_l/\alpha_l)^{2m_l+1}}{2\sqrt{\pi}\Gamma(m_l+1/2)}\sum_{j_l=0}^{\infty}\frac{(2\beta_l/\alpha_l)^{j_l}}{(j_l)!}\frac{2^s}{\alpha_l^s}\Gamma\Big(\frac{s}{2}+\frac{j_l+1}{2}\Big)\Gamma\Big(\frac{s}{2}+m_l+\frac{j_l+1}{2}\Big),\label{mel+} \\
\mathcal{M}_{f_{X_k^-}}(s)&=\frac{(\gamma_k/\alpha_k)^{2m_k+1}}{2\sqrt{\pi}\Gamma(m_k+1/2)}\sum_{j_k=0}^{\infty}\frac{(-2\beta_k/\alpha_k)^{j_k}}{(j_k)!}\frac{2^s}{\alpha_k^s}\Gamma\Big(\frac{s}{2}+\frac{j_k+1}{2}\Big)\Gamma\Big(\frac{s}{2}+m_k+\frac{j_k+1}{2}\Big).\label{mel-}
\end{align}
The PDF of $Z^+$ can be found on taking inverse Mellin transform of (\ref{eq:2}), and so we calculate
%\begin{align}
%
%	 & & \mathcal{M}_{f_{Z^-}}=\sum_{s \in S_k^-} \prod_{j \in s} \prod_{l \in s^c} \mathcal{M}_{f_{X_j^-}} %\mathcal{M}_{f_{X_l^+}}.
%\end{align}
%To derive the corresponding density function, we find the inverse Mellin transform of the following expression:
\begin{align}
	& \mathcal{M}^{-1} \bigg(  \frac{(2^N)^s}{\xi^s} \prod^N_{i=1} \Gamma\Big(\frac{s}{2}+\frac{j_i+1}{2}\Big) \Gamma\Big(\frac{s}{2}+m_i+\frac{j_i+1}{2}\Big) \bigg) \nonumber
	\\  &\quad=  \frac{1}{2 \pi \mathrm{i}} \lim_{U \rightarrow \infty} \int^{c+\mathrm{i}U}_{c-\mathrm{i}U} z^{-s-1}  \frac{(2^N)^{s}}{\xi^s } \prod^N_{i=1} \Gamma\Big(\frac{s}{2}+\frac{j_i+1}{2}\Big) \Gamma\Big(\frac{s}{2}+m_i+\frac{j_i+1}{2}\Big)\, \mathrm{d}s \nonumber
	\\ &\quad=  \frac{\xi}{2^{N-1}} G^{2N,0}_{0,2N} \bigg( \frac{\xi^2}{4^N}z^2\, \bigg|\, \begin{aligned} -\atop \tfrac{j_1}{2},\ldots, \tfrac{j_N}{2}, m_1+\tfrac{j_1}{2}, \ldots,  m_N+\tfrac{j_N}{2}  \end{aligned}   \bigg),\label{eq:3}
\end{align}
where $c > 0$. Here we calculated the contour integral by using  the contour integral representation (\ref{mdef}) of the $G$-function. The desired formula for the PDF of $Z$ for $z>0$ now follows from (\ref{eq:2}), (\ref{mel+}), (\ref{mel-}) and (\ref{eq:3}). For $z<0$, we use that the Mellin transform of $Z^-$ is given by 
\begin{align}
	\mathcal{M}_{f_{Z^-}}(s)=\sum_{\sigma\in S_N^-}\bigg\{ \prod_{k \in \sigma} \prod_{l \in \sigma^c} \mathcal{M}_{f_{X_k^-}}(s)\mathcal{M}_{f_{X_l^+}}(s)\bigg\},\nonumber
\end{align}
and by a similar argument we obtain the desired formula (\ref{eq:1}) for the PDF of $Z$ for $z<0$. 
%By combining (2.4) in \cite{gl24} and (\ref{eq:2}), we are able to find the mellin transform of $\mathcal{M}_{f_{Z^+}}$ and $\mathcal{M}_{f_{Z^-}}$. Notice that their inverse transforms are $N$ infinite summations involving (\ref{eq:3}). With this context, we are able to obtain the desired results. 
\end{proof}
%\qed

We now provide a simplified formula for the PDF of $Z=\prod_{i=1}^{N}X_i$ in the special case that $m_1,\ldots,m_N$ are half-integers. In this case, the PDF can be expressed as an $N$-fold finite sum rather than an $N$-fold infinite series. The Meijer $G$-functions in the series are also of lower order. 
%We make use of the formula (\ref{special}), where in this case, the modified Bessel function of the second kind takes on an elementary form. 
\begin{proposition}\label{prop2.3}
	Suppose that $m_1-1/2,\ldots,m_N-1/2\geq0$ are non-negative integers. Then
    %Then (\ref{eq:1}) can be simplified into a product of $N$ finite summations expressed in terms of a reduced Meijer $G$-function:
	\begin{align}
		f_Z(z)&=\bigg\{\prod_{i=1}^{N}\frac{\gamma_i^{2m_i+1}}{(2\alpha_i)^{m_i+1/2}(m_i-1/2)!}\bigg\}\sum_{j_1=0}^{m_1-1/2}\cdots\sum_{j_N=0}^{m_N-1/2}\frac{(m_1-1/2+j_1)!}{(m_1-1/2-j_1)!j_1!}\times\cdots\nonumber\\
		&\quad\times\frac{(m_N-1/2+j_N)!}{(m_N-1/2-j_N)!j_N!}\sum_{\sigma\in A(z)}\bigg\{\prod_{k \in \sigma}\prod_{l \in \sigma^c}\frac{(\alpha_k+\beta_k)^{j_k+1/2-m_k}}{(2\alpha_k)^{j_k}}\cdot\frac{(\alpha_l-\beta_l)^{j_l+1/2-m_l}}{(2\alpha_l)^{j_l}}\bigg\}\nonumber\\
		&\quad\times G^{N,0}_{0,N} \bigg( \omega_\sigma|z|\, \bigg|\, \begin{aligned} -\atop  m_1-\tfrac{1}{2}-j_1, \ldots, m_N-\tfrac{1}{2}-j_N \end{aligned}   \bigg),\quad z\in\mathbb{R}, \label{redf}
	\end{align}
	where $A(z)=S_N^+$ for $z>0$ and $A(z)=S_N^-$ for $z<0$.
	\end{proposition}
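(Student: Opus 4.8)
The plan is to re-run the Mellin-transform argument of Theorem \ref{thm1}, but starting from the half-integer closed form of the modified Bessel function rather than from its integral representation. Write $m_i=n_i+1/2$ with $n_i=m_i-1/2$ a non-negative integer. Using $K_{n+1/2}(x)=\sqrt{\pi/(2x)}\,\mathrm{e}^{-x}\sum_{j=0}^{n}\frac{(n+j)!}{j!(n-j)!(2x)^{j}}$, the $\mathrm{VG}(m_i,\alpha_i,\beta_i,0)$ density restricted to $x>0$ becomes a finite linear combination of the functions $x^{\,n_i-j_i}\mathrm{e}^{-\lambda_i^-x}$, $0\le j_i\le n_i$, and restricted to $x<0$ it is, via $x\mapsto -x$, the corresponding combination of $|x|^{\,n_i-j_i}\mathrm{e}^{-\lambda_i^+|x|}$. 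Integrating $x^{s-1}$ against these yields the closed-form Mellin transforms
\[
\mathcal{M}_{f_{X_l^+}}(s)=\frac{\gamma_l^{2m_l+1}}{(2\alpha_l)^{m_l+1/2}(m_l-1/2)!}\sum_{j_l=0}^{m_l-1/2}\frac{(m_l-1/2+j_l)!}{(m_l-1/2-j_l)!\,j_l!\,(2\alpha_l)^{j_l}}\;\frac{\Gamma\!\bigl(s+m_l-\tfrac12-j_l\bigr)}{(\lambda_l^-)^{\,s+m_l-1/2-j_l}},
\]
and the analogous expression for $\mathcal{M}_{f_{X_k^-}}(s)$ with $\lambda_l^-$ replaced by $\lambda_k^+$ (for $m_i=1/2$ these reduce to the Laplace Mellin transforms, and they agree with the $N=2$ formulas of \cite{gl24}).

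Next I would substitute these into the decompositions $\mathcal{M}_{f_{Z^+}}(s)=\sum_{\sigma\in S_N^+}\prod_{k\in\sigma}\mathcal{M}_{f_{X_k^-}}(s)\prod_{l\in\sigma^c}\mathcal{M}_{f_{X_l^+}}(s)$ and its counterpart for $Z^-$ with $S_N^-$, already established in the proof of Theorem \ref{thm1}. Expanding the finite products of finite sums produces an $N$-fold finite sum over $(j_1,\dots,j_N)$ together with the sum over $\sigma$; after extracting the $s$-free constants, the part of each summand depending on $s$ is, writing $p_i=m_i-\tfrac12-j_i$ and $c_i=\lambda_i^+$ for $i\in\sigma$, $c_i=\lambda_i^-$ for $i\in\sigma^c$,
\[
\Bigl(\textstyle\prod_{i=1}^{N}c_i^{-p_i}\Bigr)\,\omega_\sigma^{-s}\,\prod_{i=1}^{N}\Gamma(s+p_i),
\]
since $\prod_{i=1}^{N}c_i=\prod_{i\in\sigma}\lambda_i^{+}\prod_{i\in\sigma^c}\lambda_i^{-}=\omega_\sigma$. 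Inverting term by term via the contour-integral representation (\ref{mdef}) of the Meijer $G$-function — which is immediate here, as only finitely many terms are involved so no interchange of a limit with an infinite sum is needed — gives $\mathcal{M}^{-1}\bigl(\omega_\sigma^{-s}\prod_{i}\Gamma(s+p_i)\bigr)(z)=G^{N,0}_{0,N}\bigl(\omega_\sigma z\mid p_1,\dots,p_N\bigr)$ for $z>0$. Collecting the constants, using $c_i^{-p_i}=(\alpha_i+\beta_i)^{j_i+1/2-m_i}$ for $i\in\sigma$ and $(\alpha_i-\beta_i)^{j_i+1/2-m_i}$ for $i\in\sigma^c$ (and bundling the $\sigma$-independent factor $\prod_i(2\alpha_i)^{-j_i}$ into the $\sigma$-sum termwise), and repeating the computation for $z<0$, where the $Z^-$ decomposition picks out $S_N^-$, yields (\ref{redf}).

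There is no serious obstacle here; the argument is essentially bookkeeping. The two points that need a little care are (i) verifying that the scaling factors combine so that the $G$-function argument is exactly $\omega_\sigma|z|$, which is where the identity $\prod_{i\in\sigma}\lambda_i^{+}\prod_{i\in\sigma^c}\lambda_i^{-}=\omega_\sigma$ enters, and (ii) correctly tracking the powers of $2\alpha_i$ and of $\gamma_i,\lambda_i^{\pm}$ through the expansion of the products and the Legendre-type consolidation of Gamma factors. One could instead attempt to deduce (\ref{redf}) directly from (\ref{eq:1}) by applying Gamma-function multiplication formulae to collapse each order-$2N$ $G$-function to an order-$N$ one and to resum the infinite $j$-series, but this appears more delicate than simply re-running the Mellin-transform computation with the finite-sum density, so I would not pursue that route.
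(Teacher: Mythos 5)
Your proposal is correct and follows essentially the same route as the paper's own proof: both compute the Mellin transforms of $X_i^{\pm}$ in closed form via the elementary representation of $K_{m+1/2}$, combine them through the $S_N^{\pm}$ decomposition of $\mathcal{M}_{f_{Z^{\pm}}}$, and invert termwise using the contour-integral definition (\ref{mdef}) of the Meijer $G$-function. The only difference is the Mellin normalisation ($x^{s-1}$ versus the paper's $x^{s}$), which merely shifts where the extra factor of $\omega_\sigma$ appears; your bookkeeping of the powers of $\lambda_i^{\pm}$ and $2\alpha_i$ is consistent with (\ref{redf}).
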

	\begin{remark}
		%When specialised to the case $N=2$, formula (\ref{redf}) can be reduced to a finite sum expressed in terms of the modified Bessel function of the second kind (via an application of equation (\ref{km})), which is in agreement with formula (2.9) of \cite{gl24}. 
			When $\beta_1=\cdots=\beta_N=0$ the PDF (\ref{redf}) simplifies to the following. For $z\in\mathbb{R}$,
		\begin{align}
			f_Z(z)&=\frac{\xi}{2}\bigg\{\prod_{i=1}^{N}\frac{2^{1/2-m_i}}{(m_i-1/2)!}\bigg\}\sum_{j_1=0}^{m_1-1/2}\cdots\sum_{j_N=0}^{m_N-1/2}\frac{(m_1-1/2+j_1)!}{(m_1-1/2-j_1)!j_1!2^{j_1}}\times\cdots\nonumber\\
			&\quad\times\frac{(m_N-1/2+j_N)!}{(m_N-1/2-j_N)!j_N!2^{j_N}}G^{N,0}_{0,N} \bigg( \xi|z|\,\bigg|\, \begin{aligned} -\atop m_1-\tfrac{1}{2}-j_1, \ldots, m_N-\tfrac{1}{2}-j_N \end{aligned}   \bigg)\label{beta0}.
		\end{align}
	%	and we make use of this formula in Proposition \ref{p2.7}.
	\end{remark}
		%\noindent{\emph{Proof of Proposition \ref{prop2.3}.}}
        \begin{proof}
		We first consider the case $z>0$. We begin by computing the Mellin transform of $X_i^+$:
		\begin{align}
			\mathcal{M}f_{X_i^+}(s)&=\int_{0}^{\infty}x^s\frac{\gamma_i^{2m_i+1}}{\sqrt{\pi}(2\alpha_i)^{m_i}(m_i-1/2)!}\mathrm{e}^{\beta_ix}x^{m_i}K_{m_i}(\alpha_ix)\,\mathrm{d}x\nonumber\\
			&=\frac{\gamma_i^{2m_i+1}}{(2\alpha_i)^{m_i+1/2}(m_i-1/2)!}\sum_{j_i=0}^{m_i-1/2}\frac{(m_i-1/2+j_i)!}{(m_i-1/2-j_i)!j_i!}(2\alpha_i)^{-j_i}\nonumber\\
			&\quad\times\int_{0}^{\infty}\mathrm{e}^{-(\alpha_i-\beta_i)x}x^{m_i-1/2-j_i+s}\,\mathrm{d}x\nonumber\\
			%&=\frac{\gamma_i^{2m_i+1}}{(2\alpha_i)^{m_i+1/2}\Gamma(m_i+1/2)}\sum_{j_i=0}^{m_i-1/2}\frac{(m_i-1/2+j_i)!}{(m_i-1/2-j_i)!j_i!}(2\alpha_i)^{-j_i}\nonumber\\
			%&\quad\times(\alpha_i-\beta_i)^{-(m_i+s-1/2)}\Gamma(m_i+s+1/2-j_i)\nonumber\\
			&=\frac{\gamma_i^{2m_i+1}}{(2\alpha_i)^{m_i+1/2}(m_i-1/2)!}\sum_{j_i=0}^{m_i-1/2}\frac{(m_i-1/2+j_i)!}{(m_i-1/2-j_i)!j_i!}(2\alpha_i)^{-j_i}\nonumber\\
			&\quad\times(\alpha_i-\beta_i)^{j_i-m_i-1/2}\frac{\Gamma(m_i+s+1/2-j_i)}{(\alpha_i-\beta_i)^s},\nonumber
		\end{align}
		where in the second step we used the elementary representation of the modified Bessel function of the second kind ${K}	_{m+1/2}(x)=\sqrt{\pi/(2x)}\sum_{j=0}^m\frac{(m+j)!}{(m-j)!j!}(2x)^{-j}\mathrm{e}^{-x}$ for $m=0,1,2,\ldots$,  and in the final step we used the well-known integral representation of the gamma function to evaluate the integral. 
		Similarly, for $X_i^-$ we have 
		\begin{align*}
			\mathcal{M}f_{X_i^-}(s)&=\frac{\gamma_i^{2m_i+1}}{(2\alpha_i)^{m_i+1/2}(m_i-1/2)!}\sum_{j_i=0}^{m_i-1/2}\frac{(m_i-1/2+j_i)!}{(m_i-1/2-j_i)!j_i!}(2\alpha_i)^{-j_i}\nonumber\\
			&\quad\times(\alpha_i+\beta_i)^{j_i-m_i-1/2}\frac{\Gamma(m_i+s+1/2-j_i)}{(\alpha_i+\beta_i)^s}.
		\end{align*}
		The Mellin transform of $Z^+$ is given by
		\begin{align}
			\mathcal{M}f_{Z^+}(s)
			&=\sum_{\sigma \in S_N^+}\bigg\{\prod_{k \in \sigma}\prod_{l \in \sigma^c}\mathcal{M}_{f_{X_k^-}}(s)\mathcal{M}_{f_{X_l^+}}(s)\bigg\}\nonumber\\
			%&=\sum_{\sigma \in S_N^+}\prod_{k \in \sigma}\prod_{l \in \sigma^c}\frac{\gamma_k^{2m_k+1}}{(2\alpha_k)^{m_k+1/2}\Gamma(m_k+1/2)(\alpha_k+\beta_k)^{m_k+1/2}}\sum_{j_k=0}^{m_k-1/2}\frac{(m_k-1/2+j_k)!}{(m_k-1/2-j_k)!j_k!}\nonumber\\
			%&\quad\times\bigg(\frac{\alpha_k+\beta_k}{2\alpha_k}\bigg)^{j_k}\frac{\Gamma(m_k+s+1/2-j_k)}{(\alpha_k+\beta_k)^s}\frac{\gamma_l^{2m_l+1}}{(2\alpha_l)^{m_l+1/2}\Gamma(m_l+1/2)(\alpha_l-\beta_l)^{m_l+1/2}}\nonumber\\
			%&\quad\times\sum_{j_l=0}^{m_l-1/2}\frac{(m_l-1/2+j_l)!}{(m_l-1/2-j_l)!j_l!}\bigg(\frac{\alpha_l-\beta_l}{2\alpha_l}\bigg)^{j_l}\frac{\Gamma(m_l+s+1/2-j_l)}{(\alpha_l-\beta_l)^s}\label{zplus}.
			&=\bigg\{\prod_{i=1}^{N}\frac{\gamma_i^{2m_i+1}}{(2\alpha_i)^{m_i+1/2}(m_i-1/2)!}\bigg\}\sum_{j_1=0}^{m_1-1/2}\cdots\sum_{j_N=0}^{m_N-1/2}\frac{(m_1-1/2+j_1)!}{(m_1-1/2-j_1)!j_1!}\times\cdots\nonumber\\
			&\quad\times\frac{(m_N-1/2+j_N)!}{(m_N-1/2-j_N)!j_N!}\sum_{\sigma\in S_N^+}\bigg\{\prod_{k \in \sigma}\prod_{l \in \sigma^c}\frac{(\alpha_k+\beta_k)^{j_k-1/2-m_k}}{(2\alpha_k)^{j_k}}\cdot\frac{(\alpha_l-\beta_l)^{j_l-1/2-m_l}}{(2\alpha_l)^{j_l}}\nonumber\\
			&\quad\times\frac{\Gamma(m_k+s+1/2-j_k)}{(\alpha_k-\beta_k)^{s}}\frac{\Gamma(m_l+s+1/2-j_l)}{(\alpha_l+\beta_l)^{s}}\bigg\}.\label{zplus}
		\end{align}
		Taking the inverse Mellin transform of (\ref{zplus}) yields the PDF of $Z^+$ and we are left to find
		\begin{align*}
			&\mathcal{M}^{-1}\bigg(\prod_{k \in \sigma}\prod_{l \in \sigma^c }\frac{\Gamma(m_k+s+1/2-j_k)}{(\alpha_k+\beta_k)^s}\frac{\Gamma(m_l+s+1/2-j_l)}{(\alpha_l-\beta_l)^s}\bigg)\nonumber\\
			&=\mathcal{M}^{-1}\bigg(\bigg\{\prod_{i=1}^{N}\Gamma(m_i+s+1/2-j_i)\bigg\}\bigg\{\prod_{k \in \sigma}\prod_{l \in \sigma^c}(\alpha_k+\beta_k)^{-s}(\alpha_l-\beta_l)^{-s}\bigg\}\bigg)\nonumber\\
			&=\frac{1}{2\pi\mathrm{i}}\lim_{U \rightarrow \infty}\int_{c+\mathrm{i}U}^{c-\mathrm{i}U}z^{-s-1}\bigg\{\prod_{i=1}^{N}\Gamma(m_i+s+1/2-j_i)\bigg\}\bigg\{\prod_{k\in\sigma}\prod_{l \in \sigma^c}(\alpha_k+\beta_k)^{-s}(\alpha_l-\beta_l)^{-s}\bigg\}\,\mathrm{d}s\nonumber\\
			&=\bigg\{\prod_{k\in\sigma}\prod_{l \in \sigma^c}(\alpha_k+\beta_k)(\alpha_l-\beta_l)\bigg\}\nonumber\\
			&\quad\times G^{N,0}_{0,N} \bigg( z\prod_{k \in \sigma}\prod_{l \in \sigma^c}(\alpha_k+\beta_k)(\alpha_l-\beta_l)\,\bigg| \,\begin{aligned} -\atop  m_1-\tfrac{1}{2}-j_1, \ldots, m_N-\tfrac{1}{2}-j_N \end{aligned}\bigg),
		\end{align*}
		where $c>0$ and in the final step we evaluated the contour integral by using a change of variables and the contour integral representation (\ref{mdef}) of the Meijer $G$-function. We now have
		\begin{align}
			f_{Z^+}(z)&=\bigg\{\prod_{i=1}^{N}\frac{\gamma_i^{2m_i+1}}{(2\alpha_i)^{m_i+1/2}(m_i-1/2)!}\bigg\}\sum_{j_1=0}^{m_1-1/2}\cdots\sum_{j_N=0}^{m_N-1/2}\frac{(m_1-1/2+j_1)!}{(m_1-1/2-j_1)!j_1!}\cdots\nonumber\\
			&\quad\times\frac{(m_N-1/2+j_N)!}{(m_N-1/2-j_N)!j_N!}\sum_{\sigma\in S_N^+}\bigg\{\prod_{k \in \sigma}\prod_{l \in \sigma^c}\frac{(\alpha_k+\beta_k)^{j_k+1/2-m_k}}{(2\alpha_k)^{j_k}}\cdot\frac{(\alpha_l-\beta_l)^{j_l+1/2-m_l}}{(2\alpha_l)^{j_l}}\bigg\}\nonumber\\
			&\quad\times G^{N,0}_{0,N} \bigg(\omega_\sigma z \,\bigg|\, \begin{aligned} -\atop  m_1-\tfrac{1}{2}-j_1, \ldots, m_N-\tfrac{1}{2}-j_N \end{aligned}\bigg).\nonumber
			%&=\prod_{i=1}^{N}\frac{\gamma_k^{2m_i+1}}{(2\alpha_i)^{m_i+1/2}\Gamma(m_i+1/2)}\sum_{j_i=0}^{m_i-1/2}\frac{(m_i-1/2+j_i)!}{(m_i-1/2-j_i)!j_i!}\nonumber\\
			%&\quad\times\sum_{\sigma \in S_N^+}\prod_{k \in \sigma}\prod_{l \in \sigma^c}
			%	\frac{(\alpha_k+\beta_k)^{j_k+1/2-m_k}}{(2\alpha_k)^{j_k}}\cdot\frac{(\alpha_l-\beta_l)^{j_l+1/2-m_l}}{(2\alpha_l)^{j_l}}
			%	\nonumber\\
			%&\quad\times G^{N,0}_{0,N} \bigg( \prod_{k \in \sigma}\prod_{l \in \sigma^c}(\alpha_k+\beta_k)(\alpha_l-\beta_l)z\bigg| \begin{aligned} &- \\ m_1-\tfrac{1}{2}-j_1, &\ldots, m_N-\tfrac{1}{2}-j_N \end{aligned}\bigg).
		\end{align}
		By a similar argument we can obtain the desired formula for the PDF of $Z$ for $z<0$.
        \end{proof}
\subsection{Cumulative distribution function}\label{sec2.2}
Denote the CDF of $Z = \prod^N_{i=1} X_i $ by $F_Z(z) = \mathbb{P}(Z \leq z) $. We now obtain an exact formula for the CDF of the product of $N$ independent symmetric VG random variables.
\begin{theorem}
	Suppose that $\beta_1=\cdots=\beta_N=0$ and $m_1,\ldots,m_N>-1/2$, $\alpha_1,\ldots,\alpha_N>0$. Then, for $z \in \mathbb{R}$,
	\begin{align}F_Z(z)&=\frac{1}{2}+ \frac{\xi\eta z }{  2^{N+1} \pi^{N/2}  } G^{2N,1}_{1,2N+1}\bigg( \frac{\xi^2}{4^{N}}z^2 \,\bigg|\,{ \frac{1}{2} \atop 0,\ldots,0,m_1,\ldots,m_N,-\frac{1}{2}}\bigg)\label{cdf1}\\
		&=\frac{1}{2}+ \frac{\eta\,\mathrm{sgn}(z)}{  2 \pi^{N/2}  } G^{2N,1}_{1,2N+1}\bigg( \frac{\xi^2}{4^{N}}z^2\, \bigg|\,{ 1 \atop \frac{1}{2},\ldots,\frac{1}{2},m_1+\frac{1}{2},\ldots,m_N+\frac{1}{2},{0}}\bigg)\label{cdf2}.
	\end{align}
\end{theorem}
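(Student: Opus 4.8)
The plan is to derive (\ref{cdf1}) by integrating the density over $(0,z)$ and then to obtain (\ref{cdf2}) from (\ref{cdf1}) via an elementary $G$-function identity. Since $\beta_1=\cdots=\beta_N=0$, the PDF of $Z$ is the single Meijer $G$-function in (\ref{simp}), which is an even function of $z$; hence $\mathbb{P}(Z=0)=0$, $F_Z(0)=\tfrac12$, and $F_Z(z)-\tfrac12=\int_0^z f_Z(t)\,\mathrm{d}t$ is odd in $z$. The right-hand side of (\ref{cdf1}) is likewise $\tfrac12$ plus an odd function of $z$ (its prefactor is proportional to $z$, and the $G$-function depends on $z$ only through $z^2$), so it is enough to prove (\ref{cdf1}) for $z>0$; the case $z<0$ then follows from $F_Z(z)=1-F_Z(-z)$, and $z=0$ is immediate. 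For $z>0$, I would substitute $u=t^2$ to get
\[
\int_0^z f_Z(t)\,\mathrm{d}t=\frac{\xi\eta}{2^{N+1}\pi^{N/2}}\int_0^{z^2}u^{-1/2}\,G^{2N,0}_{0,2N}\bigg(\frac{\xi^2}{4^N}u\,\bigg|\,{-\atop 0,\ldots,0,\,m_1,\ldots,m_N}\bigg)\,\mathrm{d}u.
\]

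I would then apply the standard formula for the incomplete integral of a Meijer $G$-function,
\[
\int_0^y u^{\alpha-1}\,G^{m,n}_{p,q}\bigg(\omega u\,\bigg|\,{a_1,\ldots,a_p\atop b_1,\ldots,b_q}\bigg)\,\mathrm{d}u=y^{\alpha}\,G^{m,n+1}_{p+1,q+1}\bigg(\omega y\,\bigg|\,{1-\alpha,a_1,\ldots,a_p\atop b_1,\ldots,b_q,\,-\alpha}\bigg),
\]
with $\alpha=\tfrac12$, $\omega=\xi^2/4^N$, $y=z^2$. Since $(z^2)^{1/2}=z$, this yields exactly the $G$-function in (\ref{cdf1}) (new upper parameter $\tfrac12$, new lower parameter $-\tfrac12$), and multiplying by $\xi\eta/(2^{N+1}\pi^{N/2})$ gives (\ref{cdf1}).

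For (\ref{cdf2}), rather than integrate again, I would apply the scaling property $x^{\rho}\,G^{m,n}_{p,q}\big(x\mid{a_1,\ldots,a_p\atop b_1,\ldots,b_q}\big)=G^{m,n}_{p,q}\big(x\mid{a_1+\rho,\ldots,a_p+\rho\atop b_1+\rho,\ldots,b_q+\rho}\big)$ with $\rho=\tfrac12$ and $x=\xi^2z^2/4^N$. Adding $\tfrac12$ to every parameter turns the $G$-function of (\ref{cdf1}) into that of (\ref{cdf2}) ($\tfrac12\mapsto1$ on top; $0\mapsto\tfrac12$, $m_i\mapsto m_i+\tfrac12$, $-\tfrac12\mapsto0$ on the bottom), while $x^{1/2}=\xi|z|/2^N$ converts the prefactor $\xi\eta z/(2^{N+1}\pi^{N/2})$ into $\eta\,\mathrm{sgn}(z)/(2\pi^{N/2})$; this is (\ref{cdf2}).

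The step needing the most care is checking that the $G$-function integration formula is legitimately applicable, i.e. that $u^{-1/2}G^{2N,0}_{0,2N}(\xi^2 u/4^N\mid\cdots)$ is integrable on $(0,z^2)$. At $u=0$ the $G$-function is $O(u^{b})$ up to logarithmic factors, where $b=\min\{0,m_1,\ldots,m_N\}>-\tfrac12$, so the integrand is $O(u^{b-1/2})$ with $b-\tfrac12>-1$; at infinity it decays super-exponentially. Hence the integral converges, and one may equally justify the computation directly: insert the Mellin--Barnes representation (\ref{mdef}) of $G^{2N,0}_{0,2N}$, interchange the (absolutely convergent) $s$- and $u$-integrals, and use $\int_0^{z^2}u^{s-1/2}\,\mathrm{d}u=(z^2)^{s+1/2}/(s+\tfrac12)$; the extra factor $1/(s+\tfrac12)$, a ratio of two gamma functions, is precisely what raises the order of the $G$-function to $(2N{+}1,1)$ and supplies its additional parameters $\tfrac12$ and $-\tfrac12$.
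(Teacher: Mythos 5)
Your proof is correct and follows essentially the same route as the paper: integrate the even density (\ref{simp}) from $0$ to $z$, reduce to $\int_0^{(\cdot)} u^{-1/2}G^{2N,0}_{0,2N}(\cdot)\,\mathrm{d}u$ via the substitution $u=t^2$, apply the incomplete-integral formula (\ref{mint}) (with the boundary term at $0$ vanishing), and then pass to (\ref{cdf2}) with the shift identity (\ref{meijergidentity}). Your added checks — the parity argument reducing to $z>0$ and the integrability estimate near $u=0$ — are sound and only make explicit what the paper leaves implicit.
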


\begin{proof} 
	We only consider the case $z>0$, since the PDF (\ref{simp}) is symmetric about the origin when $\beta_1=\cdots=\beta_N=0$. Now, for $z>0$, 
	\begin{align}
		F_Z(z)&=\frac{1}{2}+ \frac{\xi \eta }{ 2^N \pi^{N/2}  } \int_0^z G^{2N,0}_{0,2N}\bigg( \frac{\xi^2 }{2^{2N}} y^2\, \bigg|\,{- \atop 0,\ldots,0,m_1,\ldots,m_N}\bigg)\,\mathrm{d}y  \nonumber
		\\&=\frac{1}{2}+ \frac{\eta }{ 2\pi^{N/2}  } \int_0^{\tfrac{\xi^2z^2}{2^{2N}}}u^{-1/2} G^{2N,0}_{0,2N}\bigg( u \, \bigg|\,{- \atop 0,\ldots,0,m_1,\ldots,m_N}\bigg)\,\mathrm{d}u \nonumber\\
		&=\frac{1}{2}+\frac{\eta}{2\pi^{N/2}}\bigg[u^{1/2}G^{2N,1}_{1,2N+1}\bigg( u \, \bigg|\,{\tfrac{1}{2} \atop 0,\ldots,0,m_1,\ldots,m_N, -\tfrac{1}{2}}\bigg)\bigg]^{\tfrac{\xi^2z^2}{2^{2N}}}_{0} \label{sym}
		\\ & = \frac{1}{2}+ \frac{\xi \eta z }{  2^{N+1} \pi^{N/2}  } G^{2N,1}_{1,2N+1}\bigg( \frac{\xi^2}{4^{N}}z^2\, \bigg|\,{ \frac{1}{2} \atop 0,\ldots,0,m_1,\ldots,m_N,-\frac{1}{2}}\bigg)\nonumber .
	\end{align}
	Here we evaluated the integral using (\ref{mint}) and in the last step 
	used that the $G$-function in (\ref{sym}) evaluated at $u = 0$ is equal to zero, which follows easily from the contour integral definition (\ref{mdef}) of the Meijer $G$-function. Lastly, (\ref{cdf2}) follows from (\ref{cdf1}) due to the relation (\ref{meijergidentity}). \end{proof}

When $m_1,\ldots,m_N$ are half-integers, the CDF of $Z$ can be expressed as an $N$-fold finite sum involving Meijer $G$-functions of lower order.
%, even for non-zero skewness parameters. 

\begin{proposition}\label{p2.7}
	Suppose that $m_1-1/2,\ldots,m_N-1/2\geq0$ are non-negative integers. %that $0\leq|\beta_i|<\alpha_i$, $i=1,\ldots,N$. 
    Also, let $\beta_1=\cdots=\beta_N=0$ and $\alpha_1,\ldots,\alpha_N>0$. Then, for $z \in \mathbb{R}$,
	\begin{align}
		F_Z(z)&=\frac{1}{2}+\frac{\mathrm{sgn}(z)}{2}\bigg\{\prod_{i=1}^{N}\frac{2^{1/2-m_i}}{(m_i-1/2)!}\bigg\}\sum_{j_1=0}^{m_1-1/2}\cdots\sum_{j_N=0}^{m_N-1/2}\frac{(m_1-1/2+j_1)!}{(m_1-1/2-j_1)!j_1!2^{j_1}}\times\cdots\nonumber\\
		&\quad\times\frac{(m_N-1/2+j_N)!}{(m_N-1/2-j_N)!j_N!2^{j_N}} G^{N,1}_{1,N+1} \bigg( \xi|z|\,\bigg|\, \begin{aligned} &1 \\ m_1+\tfrac{1}{2}-j_1, \ldots&, m_N+\tfrac{1}{2}-j_N,0 \end{aligned}   \bigg).\label{p2.73}
	\end{align}
\end{proposition}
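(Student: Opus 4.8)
The plan is to integrate the half-integer PDF formula (\ref{beta0}) (which is exactly the $\beta_1=\cdots=\beta_N=0$ case of (\ref{redf})) term by term, in direct analogy with the proof of the preceding CDF theorem but starting from the finite-sum representation rather than the single-$G$-function one. Since all skewness parameters vanish, the PDF (\ref{beta0}) is symmetric about the origin, so it suffices to establish (\ref{p2.73}) for $z>0$ and then deduce the cases $z<0$ and $z=0$ from $F_Z(-z)=1-F_Z(z)$ and $F_Z(0)=\tfrac{1}{2}$; this is exactly where the factor $\mathrm{sgn}(z)$ and the absolute value $|z|$ in (\ref{p2.73}) come from. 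Thus fix $z>0$ and write $F_Z(z)=\tfrac{1}{2}+\int_0^z f_Z(y)\,\mathrm{d}y$. The $N$-fold sum in (\ref{beta0}) is finite, so summation and integration may be interchanged without comment, and everything reduces to evaluating
\[
\int_0^z G^{N,0}_{0,N}\bigg(\xi y\,\bigg|\,{-\atop m_1-\tfrac{1}{2}-j_1,\ldots,m_N-\tfrac{1}{2}-j_N}\bigg)\,\mathrm{d}y
\]
for each multi-index $(j_1,\ldots,j_N)$ appearing in the sum, that is with $0\le j_i\le m_i-\tfrac{1}{2}$.

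For each such term I would substitute $u=\xi y$ and apply the integration formula (\ref{mint}) for the Meijer $G$-function (the identity already used in the proof of the preceding theorem, now with the auxiliary power $u^{\alpha-1}$ equal to $u^0$ rather than $u^{-1/2}$); this raises the order of the $G$-function by one and yields a multiple of $u\,G^{N,1}_{1,N+1}\big(u\,\big|\,{0\atop m_1-\tfrac{1}{2}-j_1,\ldots,m_N-\tfrac{1}{2}-j_N,-1}\big)$ evaluated between $u=0$ and $u=\xi z$. Because each $m_i-\tfrac{1}{2}-j_i\ge 0>-1$, the original integrand is integrable at $y=0$, and the contribution of the lower endpoint is zero since this $G^{N,1}_{1,N+1}$ multiplied by $u$ tends to $0$ as $u\to 0$, a fact immediate from the contour-integral definition (\ref{mdef}). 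Applying the homogeneity relation (\ref{meijergidentity}) with shift $\mu=1$ then absorbs the factor $u$ into the $G$-function, turning the bottom parameters $m_i-\tfrac{1}{2}-j_i$ into $m_i+\tfrac{1}{2}-j_i$, the extra bottom parameter $-1$ into $0$, and producing the top parameter $1$; this is precisely the $G^{N,1}_{1,N+1}$ that appears in (\ref{p2.73}).

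It then remains only to reassemble the constants: the prefactor $\tfrac{\xi}{2}\prod_{i=1}^N 2^{1/2-m_i}/(m_i-1/2)!$ of (\ref{beta0}) cancels against the $1/\xi$ produced by the change of variables $u=\xi y$ to give the stated $\tfrac{1}{2}\prod_{i=1}^N 2^{1/2-m_i}/(m_i-1/2)!$, while the combinatorial coefficients $(m_i-1/2+j_i)!/\big((m_i-1/2-j_i)!\,j_i!\,2^{j_i}\big)$ pass through unchanged. Together with the symmetry reduction this gives (\ref{p2.73}). I expect the only real obstacle to be the Meijer $G$-function parameter bookkeeping—tracking which parameters get shifted and where the new entries $1$ and $0$ are inserted when (\ref{mint}) and (\ref{meijergidentity}) are applied—together with the routine verification that the boundary term at the origin vanishes; everything else is a mechanical computation.
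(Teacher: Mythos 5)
Your proposal is correct and follows essentially the same route as the paper: symmetry reduction to $z>0$, termwise integration of the finite sum in (\ref{beta0}) after the substitution $u=\xi y$, and evaluation via the Meijer $G$ integration identity with the boundary term at $u=0$ vanishing by the contour-integral definition. The only cosmetic difference is that you apply (\ref{mint}) with $\alpha=1$ followed by (\ref{meijergidentity}), whereas the paper invokes (\ref{mint1}) directly, which is exactly that two-step combination.
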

\begin{proof} 
%Since $\beta_1=\cdots=\beta_N=0$, the PDF $f_Z(z)$ is symmetric about the origin. We can therefore obtain (\ref{p2.73} by evaluating the integral $F_Z(z)=1/2+\mathrm{sgn}(z)\int_{0}^{|z|}f_Z(y)\,\mathrm{d}y$ using the indefinite integral 
Since $\beta_1=\cdots=\beta_N=0$, the PDF of $Z$, as given by (\ref{beta0}), is symmetric about the origin, and therefore $F_Z(z)=1/2+\mathrm{sgn}(z)\int_{0}^{|z|}f_Z(y)\,\mathrm{d}y$.
We then have, for $z\in\mathbb{R}$,
\begin{align}
\!\!\!F_Z(z)&=\frac{1}{2}+\frac{\mathrm{sgn}(z)}{2}\bigg\{\prod_{i=1}^N\frac{2^{1/2-m_i}}{(m_i-1/2)!}\bigg\}\sum_{j_1=0}^{m_1-1/2}\cdots\sum_{j_N=0}^{m_N-1/2}\frac{(m_1-1/2+j_1)!}{(m_1-1/2-j_1)!j_1!2^{j_1}}\times\cdots\nonumber\\
&\quad\times\frac{(m_N-1/2+j_N)!}{(m_N-1/2-j_N)!j_N!2^{j_N}}\int_{0}^{\xi|z|}G_{0,N}^{N,0}\bigg(u \; \bigg| \;{-\atop m_1-\frac{1}{2}-j_1,\ldots,m_N-\frac{1}{2}-j_N}\bigg)\,\mathrm{d}u\nonumber\\
&=\frac{1}{2}+\frac{\mathrm{sgn}(z)}{2}\bigg\{\prod_{i=1}^{N}\frac{2^{1/2-m_i}}{(m_i-1/2)!}\bigg\}\sum_{j_1=0}^{m_1-1/2}\cdots\sum_{j_N=0}^{m_N-1/2}\frac{(m_1-1/2+j_1)!}{(m_1-1/2-j_1)!j_1!2^{j_1}}\times\cdots\nonumber\\
&\quad\times\frac{(m_N-1/2+j_N)!}{(m_N-1/2-j_N)!j_N!2^{j_N}}\bigg[G_{1,N+1}^{N,1}\bigg(u \; \bigg| \;{1\atop m_1+\frac{1}{2}-j_1,\ldots,m_N+\frac{1}{2}-j_N,0}\bigg)\bigg]^{\xi|z|}_0\label{mg}\\
&=\frac{1}{2}+\frac{\mathrm{sgn}(z)}{2}\bigg\{\prod_{i=1}^{N}\frac{2^{1/2-m_i}}{(m_i-1/2)!}\bigg\}\sum_{j_1=0}^{m_1-1/2}\cdots\sum_{j_N=0}^{m_N-1/2}\frac{(m_1-1/2+j_1)!}{(m_1-1/2-j_1)!j_1!2^{j_1}}\times\cdots\nonumber\\
&\quad\times\frac{(m_N-1/2+j_N)!}{(m_N-1/2-j_N)!j_N!2^{j_N}}G_{1,N+1}^{N,1}\bigg(\xi |z| \; \bigg| \;{1\atop m_1+\frac{1}{2}-j_1,\ldots,m_N+\frac{1}{2}-j_N,0}\bigg)\nonumber,
\end{align}
where in the third step we evaluated the integral using (\ref{mint1}). The final step follows from the fact that the  $G$-function in (\ref{mg}) evaluated at $u=0$ is equal to zero, which can be readily seen from the contour integral representation (\ref{mdef}) of the Meijer $G$-function. 
\end{proof}

In the following proposition, we provide a closed-form formula for the probability $\mathbb{P}(Z\leq0)$, expressed in terms of the Gaussian hypergeometric function (see \cite[Chapter 15]{olver} for a definition). We make use of the following formula of \cite{gaunt on}. Let $m>-1/2$, $0\leq|\beta|<\alpha$. Then, for $X\sim \mathrm{VG}(m,\alpha,\beta,0)$,
\begin{equation*}
	\mathbb{P}(X\leq0)=P_{m,\alpha,\beta}:=\frac{1}{2}-\frac{(\beta/\alpha)\Gamma(m+1)}{\sqrt{\pi}\Gamma(m+1/2)}\bigg(1-\frac{\beta^2}{\alpha^2}\bigg)^{m+1/2} {_2F_1}\bigg(1,m+1;\frac{3}{2};\frac{\beta^2}{\alpha^2}\bigg).\label{2.16}
\end{equation*}

\begin{proposition}
	Let $m_1,\ldots,m_N>-1/2$, $0\leq|\beta_i|<\alpha_i$, $i=1,\ldots,N$. Then
	\begin{align}
		\mathbb{P}(Z\leq0)=\sum_{i=1}^{N}P_{m_i,\alpha_i,\beta_i}+\sum_{\substack{\sigma\in S_N: \\|\sigma|\geq2 } }(-2)^{|\sigma|-1}\prod_{j \in \sigma}P_{m_j,\alpha_j,\beta_j}.\label{prob}
	\end{align}
In the case $(m_i,\alpha_i,\beta_i)=(m,\alpha,\beta)$ for $i=1,\ldots,N$, we have the simpler formula
\begin{align}\label{prob2}
   \mathbb{P}(Z\leq0)=\frac{1}{2}-\frac{1}{2}(1-2P_{m,\alpha,\beta})^N. 
\end{align}
\end{proposition}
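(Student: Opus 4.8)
The plan is to reduce the event $\{Z\le 0\}$ to a statement about the sign of the product $\prod_{i=1}^N\mathrm{sgn}(X_i)$ and then evaluate the resulting probability by a short sign--generating--function argument. First I would note that, since each $X_i$ has a density, $\mathbb{P}(X_i=0)=0$, hence $\mathbb{P}(Z=0)=0$ and $\le$ may be replaced by $<$ throughout; moreover the cited formula of \cite{gaunt on} gives precisely $\mathbb{P}(X_i\le 0)=\mathbb{P}(X_i<0)=P_{m_i,\alpha_i,\beta_i}$. Writing $\varepsilon_i=\mathrm{sgn}(X_i)\in\{-1,+1\}$ almost surely, we have $\mathrm{sgn}(Z)=\prod_{i=1}^N\varepsilon_i$ almost surely and $\mathbb{E}[\varepsilon_i]=\mathbb{P}(X_i>0)-\mathbb{P}(X_i<0)=1-2P_{m_i,\alpha_i,\beta_i}$.

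By independence of $X_1,\ldots,X_N$, we get $\mathbb{E}\big[\prod_{i=1}^N\varepsilon_i\big]=\prod_{i=1}^N(1-2P_{m_i,\alpha_i,\beta_i})$, while on the other hand $\mathbb{E}[\mathrm{sgn}(Z)]=\mathbb{P}(Z>0)-\mathbb{P}(Z<0)$ and $\mathbb{P}(Z>0)+\mathbb{P}(Z<0)=1$. Combining these gives $\mathbb{P}(Z\le 0)=\mathbb{P}(Z<0)=\tfrac12\big(1-\prod_{i=1}^N(1-2P_{m_i,\alpha_i,\beta_i})\big)$, which is already the formula (\ref{prob2}) in the case $(m_i,\alpha_i,\beta_i)=(m,\alpha,\beta)$ for all $i$.

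To recover (\ref{prob}) in general, I would expand the product over subsets, $\prod_{i=1}^N(1-2P_{m_i,\alpha_i,\beta_i})=\sum_{\sigma\in S_N}(-2)^{|\sigma|}\prod_{j\in\sigma}P_{m_j,\alpha_j,\beta_j}$; subtracting from $1$ removes the $\sigma=\varnothing$ term, and dividing by $2$ changes each coefficient $(-2)^{|\sigma|}$ into $(-2)^{|\sigma|-1}$, so that $\mathbb{P}(Z\le 0)=\sum_{\sigma\in S_N,\,|\sigma|\ge 1}(-2)^{|\sigma|-1}\prod_{j\in\sigma}P_{m_j,\alpha_j,\beta_j}$. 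Separating off the singleton subsets, for which $(-2)^{|\sigma|-1}=1$, splits this into $\sum_{i=1}^N P_{m_i,\alpha_i,\beta_i}$ plus the sum over subsets with $|\sigma|\ge 2$, which is exactly the right-hand side of (\ref{prob}).

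The argument involves no real obstacle; the only points requiring any care are the measure-theoretic remark that $\le$ and $<$ coincide here (so that the parity of the number of negative factors governs the sign of $Z$), and the correct identification of $P_{m_i,\alpha_i,\beta_i}$ with $\mathbb{P}(X_i\le 0)$ via \cite{gaunt on}, after which everything is elementary bookkeeping.
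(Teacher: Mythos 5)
Your proof is correct, and it takes a genuinely different route from the paper's. The paper proves the two-factor identity $\mathbb{P}(UV\le 0)=\mathbb{P}(U\le 0)+\mathbb{P}(V\le 0)-2\,\mathbb{P}(U\le 0)\mathbb{P}(V\le 0)$ for independent continuous $U,V$ and then establishes (\ref{prob}) by induction on $N$, with formula (\ref{prob2}) obtained from ``a similar inductive argument.'' You instead observe that $\mathbb{E}[\mathrm{sgn}(Z)]=\prod_{i=1}^N\mathbb{E}[\mathrm{sgn}(X_i)]$ by independence, which immediately yields the closed form
\begin{equation*}
\mathbb{P}(Z\le 0)=\tfrac12\Big(1-\prod_{i=1}^N\big(1-2P_{m_i,\alpha_i,\beta_i}\big)\Big)
\end{equation*}
for \emph{arbitrary} (not necessarily equal) parameters; both (\ref{prob}) and (\ref{prob2}) then drop out, the former by expanding the product over subsets and the latter by specialisation. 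What your approach buys is that it bypasses the induction entirely and exhibits the subset expansion in (\ref{prob}) as nothing more than the multinomial expansion of a single product, which also makes the relationship between the two displayed formulas transparent; the paper's two-factor identity (\ref{uveqn}) is of course the $N=2$ instance of the same fact, so the two arguments rest on the same elementary observation, packaged differently. The only hypotheses you use --- that each $X_i$ is continuous (so $\mathrm{sgn}(X_i)=\pm1$ almost surely) and that the $X_i$ are independent --- are exactly those the paper uses, and your identification of $P_{m_i,\alpha_i,\beta_i}$ with $\mathbb{P}(X_i\le 0)$ via the cited formula is the intended one. No gaps.
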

\begin{proof} A simple calculation shows that, for independent continuous random variables $U$ and $V$, 
\begin{align}\label{uveqn}
\mathbb{P}(UV\leq0)=\mathbb{P}(U\leq0)+\mathbb{P}(V\leq0)-2\mathbb{P}(U\leq0)\mathbb{P}(V\leq0).
\end{align} 
Therefore in the $N=2$ case we have that $\mathbb{P}(Z\leq0)=P_{m_1,\alpha_1,\beta_1}+P_{m_2,\alpha_2,\beta_2}-2P_{m_1,\alpha_1,\beta_1}P_{m_2,\alpha_2,\beta_2}$. This confirms that equation (\ref{prob}) holds for $N=2$. The general case follows by a straightforward induction on $N$ using equation (\ref{uveqn}).
The proof of formula (\ref{prob2}) in the case $(m_i,\alpha_i,\beta_i)=(m,\alpha,\beta)$, $i=1,\ldots,N$, follows from a similar inductive argument.
\end{proof}

\subsection{Characteristic function}

In the following theorem, we obtain a closed-form formula for the characteristic function of the product of independent symmetric VG random variables. We write $\varphi_Z(t)=\mathbb{E}[\mathrm{e}^{\mathrm{i}tZ}]$ for the characteristic function of  $Z$. 

\begin{theorem}Let $\beta_1=\cdots=\beta_N=0$ and $m_1,\ldots,m_N>-1/2$, $\alpha_1,\ldots,\alpha_N>0$. Then
\begin{align}
	\varphi_Z(t)&=\frac{\xi\eta}{2^{N-1}\pi^{(N-1)/2}}|t|^{-1}G^{2N-1,1}_{1,2N-1} \bigg( \frac{\xi^2}{4^{N-1}t^2} \,\bigg|\, \begin{aligned} \tfrac{1}{2}\atop  0, \ldots,0, \,m_1,\ldots, m_N  \end{aligned}   \bigg),\quad t\in\mathbb{R}.
    \label{char1}
	%&=\frac{\eta}{\pi^{(N-1)/2}}G^{2N-1,1}_{1,2N-1} \bigg( \frac{\alpha^2}{4^{N-1}t^2}\, \bigg| \begin{aligned} 1\atop \tfrac{1}{2}, \ldots,\tfrac{1}{2}, \,m_1+\tfrac{1}{2},\ldots, m_N+\tfrac{1}{2}  \end{aligned}   \bigg)\label{char2}.
\end{align}
\end{theorem}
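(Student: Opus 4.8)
The plan is to compute the characteristic function directly from its defining integral, using the symmetric-case PDF formula (\ref{simp}) from Remark \ref{firstrem} together with a known Mellin-type integral of the Meijer $G$-function against $\cos(tx)$ (equivalently against $\mathrm{e}^{\mathrm{i}tx}$, exploiting symmetry). Since $\beta_1=\cdots=\beta_N=0$, the PDF $f_Z$ is even, so $\varphi_Z(t)=\mathbb{E}[\cos(tZ)]=2\int_0^\infty \cos(tz)f_Z(z)\,\mathrm{d}z$, and it suffices to treat $t>0$ (the answer depends only on $|t|$, and $\varphi_Z(0)=1$ can be checked separately or by continuity).

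First I would substitute (\ref{simp}) and interchange the order of integration, reducing the problem to evaluating
\[
\int_0^\infty \cos(tz)\, G^{2N,0}_{0,2N}\!\bigg( \frac{\xi^2}{4^N}z^2\, \bigg|\, { - \atop 0,\ldots,0,\,m_1,\ldots,m_N }\bigg)\,\mathrm{d}z.
\]
The standard route is to write $\cos(tz)=\sqrt{\pi/(tz)}\,\tfrac12\, z^{1/2}\cdot(\text{const})\cdot G^{1,0}_{0,2}\big(\tfrac{t^2z^2}{4}\,\big|\,{-\atop 0,\,1/2}\big)$ — that is, express the cosine itself as a Meijer $G$-function — and then apply the classical formula for the integral of a product of two Meijer $G$-functions (the Mellin convolution formula, e.g.\ Gradshteyn--Ryzhik 7.811 or the corresponding entry in the DLMF / Luke). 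This collapses the product of the $G^{2N,0}_{0,2N}$ with the $G^{1,0}_{0,2}$ into a single higher-order $G$-function; after the change of variable $u=z^2$ and bookkeeping of the argument (the $z^2$-scaling sends $\xi^2/4^N$ and $t^2/4$ into a combined argument $\xi^2/(4^{N-1}t^2)$), the parameter lists merge so that one "$0$" from the cosine's numerator-type parameter cancels against one of the $N$ zeros among the bottom parameters of the original $G$-function, dropping the order from $2N+1$ down to $2N-1$ and producing exactly the indices $0,\ldots,0,m_1,\ldots,m_N$ (now $N-1$ zeros) with top parameter $1/2$. Tracking the prefactor — powers of $2$, $\pi$, $\xi$, and the $|t|^{-1}$ that comes from the $z^{-1/2}$ in the cosine's $G$-representation and the Jacobian of $u=z^2$ — should reproduce the constant $\xi\eta/(2^{N-1}\pi^{(N-1)/2})$ in (\ref{char1}).

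The main obstacle is justifying the interchange of the infinite integral with the (already convergent) $G$-function integral and ensuring the product-of-$G$-functions formula applies: one must verify the convergence conditions on the orders and parameters (the relevant condition here is essentially that $2N > 1$, i.e.\ the original $G$-function decays fast enough at infinity, which holds since all bottom parameters are $\geq 0$, giving algebraic-times-exponential decay of $f_Z$), and that the contour-shifting / Mellin-Barnes manipulation underlying the convolution formula is valid for $t>0$. A cleaner alternative that sidesteps some of this is to work on the Mellin side throughout: the Mellin transform of $f_Z$ in the symmetric case is the product $\prod_i \mathcal{M}_{f_{|X_i|}}(s)$, a ratio of gamma functions, and the characteristic function is recovered via the Mellin transform of $\cos$, $\int_0^\infty z^{s-1}\cos(tz)\,\mathrm{d}z = \Gamma(s)\cos(\pi s/2)\,t^{-s} = \tfrac12\sqrt\pi\,t^{-s}\,\Gamma(s/2)/\Gamma((1-s)/2)$ (by the duplication/reflection formulas); multiplying, shifting the contour, and recognising the resulting Mellin--Barnes integral as a single Meijer $G$-function gives (\ref{char1}) directly. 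Either way, the final step is routine identification of the Mellin--Barnes integrand with the definition (\ref{mdef}) of the $G$-function and collecting constants; I would present the $G$-function-convolution version as the main line and remark that (\ref{char1}) can equivalently be obtained by the Mellin-transform computation.
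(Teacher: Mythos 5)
Your proposal is correct and follows essentially the same route as the paper: the authors also use evenness to write $\varphi_Z(t)=2\int_0^\infty\cos(tz)f_Z(z)\,\mathrm{d}z$, evaluate the cosine transform of the $G$-function via Luke's tabulated formula 5.6.3(18) (which is exactly the cosine-as-$G^{1,0}_{0,2}$ convolution you describe), and then cancel the extra parameter pair using the reduction identity (\ref{lukeformula}) to pass from $G^{2N,1}_{2,2N}$ to $G^{2N-1,1}_{1,2N-1}$. Your alternative Mellin-side computation is a fine remark but is not needed.
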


\begin{proof}As the PDF (\ref{simp}) of $Z$ is an even function in $z$, $\varphi_Z(t)=\mathbb{E}[\cos(tZ)]$. We then get that
\begin{align}
	\varphi_Z(t)&=2\int_{0}^{\infty}\frac{\xi\eta}{2^N\pi^{N/2}}\cos(tz)G^{2N,0}_{0,2N} \bigg( \frac{\xi^2}{4^N}z^2\, \bigg|\, \begin{aligned} -\atop 0, \ldots,0, \,m_1,\ldots, m_N  \end{aligned}   \bigg)\,\mathrm{d}z \nonumber\\
	&=\frac{\xi\eta}{2^{N-1}\pi^{N/2}}\frac{\sqrt{\pi}}{|t|}G^{2N,1}_{2,2N} \bigg( \frac{\xi^2}{4^{N-1}t^2}\, \bigg|\, \begin{aligned} {\tfrac{1}{2},0}\atop 0, \ldots,0, \,m_1,\ldots, m_N  \end{aligned}   \bigg)\nonumber\\
    &=\frac{\xi\eta}{2^{N-1}\pi^{(N-1)/2}}|t|^{-1}G^{2N-1,1}_{1,2N-1} \bigg( \frac{\xi^2}{4^{N-1}t^2} \,\bigg|\, { \tfrac{1}{2}\atop  0, \ldots,0, \,m_1,\ldots, m_N}\bigg),\nonumber
\end{align}
where the integral was evaluated using equation 5.6.3(18) of \cite{luke} and the third equality was obtained by using (\ref{lukeformula}).
\end{proof}
When $m_1,\ldots,m_N$ are half-integers, the characteristic function
%can be represented as a product of $N$ finite summations expressed in terms of a reduced Meijer $G$-function. 
can be expressed as an $N$-fold finite sum involving Meijer $G$-functions of lower order, even for non-zero skewness parameters.
\begin{proposition}
Suppose that $m_1-1/2,\ldots,m_N-1/2\geq0$ are non-negative integers and that $0\leq|\beta_i|<\alpha_i$, $i=1,\ldots,N$. Then
\begin{align}
	\varphi_Z(t)&=\frac{\mathrm{i}}{t}\bigg\{\prod_{i=1}^{N}\frac{\gamma_i^{2m_i+1}}{(2\alpha_i)^{m_i+1/2}(m_i-1/2)!}\bigg\}\sum_{j_1=0}^{m_1-1/2}\cdots\sum_{j_N=0}^{m_N-1/2}\frac{(m_1-1/2+j_1)!}{(m_1-1/2-j_1)!j_1!}\times\cdots\nonumber\\
	&\quad\times\frac{(m_N-1/2+j_N)!}{(m_N-1/2-j_N)!j_N!}\bigg\{\sum_{\sigma\in S_N^+}\bigg\{\prod_{k \in \sigma}\prod_{l \in \sigma^c}\frac{(\alpha_k+\beta_k)^{j_k+1/2-m_k}}{(2\alpha_k)^{j_k}}\cdot\frac{(\alpha_l-\beta_l)^{j_l+1/2-m_l}}{(2\alpha_l)^{j_l}}\bigg\}\nonumber\\
	&\quad\times G^{N,1}_{1,N} \bigg(\frac{\mathrm{i}\omega_\sigma}{t}\, \bigg|\, \begin{aligned} &0 \\ m_1-\tfrac{1}{2}-j_1, \ldots&,m_N-\tfrac{1}{2}-j_N  \end{aligned}   \bigg)\nonumber\\
	&\quad-\sum_{\sigma\in S_N^-}\bigg\{\prod_{k \in \sigma}\prod_{l \in \sigma^c}\frac{(\alpha_k+\beta_k)^{j_k+1/2-m_k}}{(2\alpha_k)^{j_k}}\cdot\frac{(\alpha_l-\beta_l)^{j_l+1/2-m_l}}{(2\alpha_l)^{j_l}}\bigg\}\nonumber\\
	&\quad\times G^{N,1}_{1,N} \bigg(-\frac{\mathrm{i}\omega_\sigma}{t}\, \bigg|\, \begin{aligned} &0 \\ m_1-\tfrac{1}{2}-j_1, \ldots&,m_N-\tfrac{1}{2}-j_N  \end{aligned}   \bigg)\bigg\},\quad t\in\mathbb{R}.\label{Cfhalf}
\end{align}
\end{proposition}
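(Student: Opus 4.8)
The plan is to mimic the proof of Proposition \ref{prop2.3}, but starting from the Mellin transforms of the positive and negative parts $X_i^\pm$ for half-integer $m_i$ that were already computed there, and then using a characteristic-function version of the Mellin-type factorisation in place of a density computation. Recall that for $X\sim\mathrm{VG}(m,\alpha,\beta,0)$ we may write $X=X^+-X^-$ where $X^+,X^-$ are supported on $[0,\infty)$, and that the characteristic function $\varphi_Z(t)=\mathbb{E}[\mathrm{e}^{\mathrm{i}tZ}]$ can be obtained by splitting over the sign of $Z$: $\varphi_Z(t)=\mathbb{E}[\mathrm{e}^{\mathrm{i}tZ};Z>0]+\mathbb{E}[\mathrm{e}^{\mathrm{i}tZ};Z<0]$, where the positive-$Z$ contribution corresponds to $\sigma\in S_N^+$ (an even number of the $X_i$ being negative) and the negative-$Z$ contribution to $\sigma\in S_N^-$. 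For a fixed $\sigma$, the relevant piece of the law of $|Z|$ is the multiplicative convolution of the laws of $X_k^-$ for $k\in\sigma$ and $X_l^+$ for $l\in\sigma^c$, whose Mellin transform is the product of the individual Mellin transforms in (\ref{mel+})--(\ref{mel-}); but for half-integer $m_i$ those collapse to the finite sums already displayed in the proof of Proposition \ref{prop2.3}.

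The key analytic input is a formula expressing $\int_0^\infty \mathrm{e}^{\mathrm{i}ty}g(y)\,\mathrm{d}y$, where $g$ is a density whose Mellin transform is a product of gamma factors $\prod_{i=1}^N\Gamma(m_i+s+1/2-j_i)$ times a power $c^{-s}$, as a Meijer $G$-function in the variable $\mathrm{i}c/t$ (equivalently $c/(-\mathrm{i}t)$). Concretely, after the change of variables already used in Proposition \ref{prop2.3} one has
\begin{align*}
g(y)=\bigg\{\prod_{k\in\sigma}\prod_{l\in\sigma^c}(\alpha_k+\beta_k)(\alpha_l-\beta_l)\bigg\}\,G^{N,0}_{0,N}\bigg(\omega_\sigma y\,\bigg|\,{-\atop m_1-\tfrac12-j_1,\ldots,m_N-\tfrac12-j_N}\bigg),
\end{align*}
and the needed identity is the Laplace/Fourier transform of a $G^{N,0}_{0,N}$: $\int_0^\infty \mathrm{e}^{-py}G^{N,0}_{0,N}(\omega y\mid\cdots)\,\mathrm{d}y = p^{-1}G^{N,1}_{1,N}(\omega/p\mid {0\atop\cdots})$, valid by the Mellin convolution theorem (i.e.\ termwise against the contour integral (\ref{mdef}) and the Mellin transform $\int_0^\infty\mathrm{e}^{-py}y^{s-1}\,\mathrm{d}y=\Gamma(s)p^{-s}$). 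Taking $p=-\mathrm{i}t$ gives the factor $\mathrm{i}/t$ out front and the arguments $\pm\mathrm{i}\omega_\sigma/t$ seen in (\ref{Cfhalf}); the sign split ($+\mathrm{i}\omega_\sigma/t$ for $S_N^+$ coming from $Z>0$, and $-\mathrm{i}\omega_\sigma/t$ for $S_N^-$ coming from $Z<0$, i.e.\ $p=+\mathrm{i}t$ there) and the overall minus sign on the $S_N^-$ block then drop out of $\varphi_Z(t)=\int_0^\infty\mathrm{e}^{\mathrm{i}ty}(\cdots)\,\mathrm{d}y-\int_0^\infty\mathrm{e}^{-\mathrm{i}ty}(\cdots)\,\mathrm{d}y$ rearranged as a single display.

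The main obstacle is justifying the Fourier-transform-of-$G$ identity for $p=\mp\mathrm{i}t$ purely imaginary, rather than $\mathrm{Re}(p)>0$: the integral $\int_0^\infty\mathrm{e}^{\mp\mathrm{i}ty}G^{N,0}_{0,N}(\omega_\sigma y\mid\cdots)\,\mathrm{d}y$ is only conditionally convergent, so one should either invoke analytic continuation in $p$ from the right half-plane (the $G^{N,1}_{1,N}$ on the right-hand side being analytic there, with the Mellin--Barnes contour suitably indented), or verify convergence directly from the known $y\to\infty$ decay of $G^{N,0}_{0,N}$, which is exponential-type in $y^{1/N}$ and hence makes the oscillatory integral converge. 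One must also keep track of where the parameters $m_i-\tfrac12-j_i$ sit so that the contour in (\ref{mdef}) separates the poles correctly, exactly as in Proposition \ref{prop2.3}. Once that single transform identity is in hand, the remainder is bookkeeping: substitute the half-integer Mellin transforms into $\mathcal{M}_{f_{Z^+}}$ and $\mathcal{M}_{f_{Z^-}}$ as in (\ref{zplus}), invert, integrate against $\mathrm{e}^{\mathrm{i}ty}$ term by term over the finite sums, collect the constants $\prod_i\gamma_i^{2m_i+1}/((2\alpha_i)^{m_i+1/2}(m_i-1/2)!)$ and the ratios $(\alpha_k+\beta_k)^{j_k+1/2-m_k}/(2\alpha_k)^{j_k}$ as in (\ref{redf}), and arrive at (\ref{Cfhalf}).
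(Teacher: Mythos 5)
Your proposal is correct and follows essentially the same route as the paper: the paper's proof simply integrates the half-integer PDF (\ref{redf}) against $\mathrm{e}^{\mathrm{i}tz}$ over $z>0$ and $z<0$ separately and evaluates each piece with Luke's formula 5.6.3(1), which is exactly the Laplace-transform identity $\int_0^\infty\mathrm{e}^{-py}G^{N,0}_{0,N}(\omega y\mid\cdots)\,\mathrm{d}y=p^{-1}G^{N,1}_{1,N}(\omega/p\mid{0\atop\cdots})$ that you invoke with $p=\mp\mathrm{i}t$, and your sign bookkeeping (the $\mathrm{i}/t$ prefactor, the arguments $\pm\mathrm{i}\omega_\sigma/t$, and the minus sign on the $S_N^-$ block) matches the stated formula. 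Your additional remarks on justifying the transform for purely imaginary $p$ go slightly beyond what the paper records but do not change the argument.
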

\begin{proof}
Recall that under the conditions of the proposition a formula for the PDF of $Z$ is given by (\ref{redf}). We then calculate $\varphi_Z(t)=\mathbb{E}[\mathrm{e}^{\mathrm{i}tZ}]=\int_{0}^{\infty}\mathrm{e}^{\mathrm{i}tz}f_Z(z)\,\mathrm{d}z+\int_{-\infty}^{0}\mathrm{e}^{\mathrm{i}tz}f_Z(z)\,\mathrm{d}z$, evaluating the integrals using formula 5.6.3(1) of \cite{luke}. 
\end{proof}

\subsection{Asymptotic behaviour of the distribution}
%The representations of the PDF and CDF from Sections 2.1 and 2.2 are rather complicated and difficult to parse on first inspection. Some insights can be gained from the following asymptotic approximations.

Given that the PDF of the product $Z$ for general parameter values takes a rather complicated form, it is of interest to study the asymptotic behaviour of the distribution. In this section, we derive asymptotic approximations for the PDF, tail probabilities and quantile function. We recall that we use the convention that the empty product is set to 1.

\subsubsection{Asymptotic behaviour of the density at the origin}

\begin{proposition}\label{vg0}
Let $m_1,\ldots,m_N>-1/2$, $0\leq|\beta_i|<\alpha_i$, $i=1,\ldots,N$, where $N\geq2$. 
%Suppose $Z=\prod^N_{i=1} X_i$ with $X_i \sim \mathrm{VG}(m_i,\alpha_i,\beta_i,0)$, where $m_i>-\frac{1}{2}$ and $ 0<|\beta_i |< \alpha_i$ for all $ i=1,\ldots,N. $ Then, as $z \rightarrow 0,$ the density function for $Z$ has the following growth rate with respect to cases below: 

\vspace{2mm}

\noindent{(i)} Suppose that $m_1,\ldots,m_N\geq0$, and let $ 0 \leq t \leq N$ be the number of zeros among them. Without loss of generality, set $m_1= \cdots =m_t=0$ and $m_{t+1},\ldots, m_N > 0.$ Then, as $z \rightarrow 0,$ 
\begin{align}
	f_Z(z) \sim  \frac{2^{t-1} \gamma_1\cdots\gamma_t\gamma_{t+1}^{2m_{t+1}+1}\cdots\gamma_N^{2m_N+1}  \eta  }{  (N+t-1)! \pi^{N/2} \alpha_{t+1}^{2m_{t+1}}\cdots\alpha_N^{2m_N} } \bigg\{ \prod^{N-t}_{i=1} \Gamma(m_{t+i}) \bigg\} (- \ln|z|)^{N+t-1}.\label{lim1}
\end{align}

\noindent{(ii)} Suppose that $m_1 = \min\{m_1,\ldots,m_N\} < 0.$ Without loss of generality, set $m_1= \cdots =m_t<m_{t+1},\ldots,m_N$ for $1\leq t\leq N$. Then, as $z \rightarrow 0$, 
\begin{align}
	f_Z(z) &\sim \frac{\gamma_1^{2m_1+1}\cdots\gamma_N^{2m_N+1} (\Gamma(-m_1))^N\eta }{2^{ N (1+2m_1)} \pi^{N/2}  } \bigg( \frac{\alpha_{t+1}^{2m_1}\cdots\alpha_N^{2m_1} }{\alpha_{t+1}^{2m_{t+1}}\cdots\alpha_N^{2m_N} } \bigg) \bigg\{\prod^N_{i=t+1} \Gamma(m_i-m_1)\bigg\}\nonumber\\
	&\quad\times\frac{(-2\ln|z|)^{t-1}}{(t-1)!}|z|^{2m_1}.\label{lim2}
\end{align}

\noindent (iii) When $N\geq2$, the distribution of $Z$ is unimodal with mode 0 for all parameter constellations.
\end{proposition}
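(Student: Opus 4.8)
The plan is to show that $f_Z$ is non-increasing on $(0,\infty)$ and non-decreasing on $(-\infty,0)$; since parts~(i) and~(ii) already give $f_Z(z)\to\infty$ as $z\to0$ whenever $N\geq2$, this is precisely the assertion that the distribution of $Z$ is unimodal with mode $0$. The engine of the argument is a multiplicative-convolution lemma of Khinchine type: \emph{if $V$ has a density $g$ that is non-increasing on $(0,\infty)$ and non-decreasing on $(-\infty,0)$, and $W$ is any independent random variable with $\mathbb{P}(W=0)=0$, then $VW$ has a density with the same monotonicity property.} This follows from $f_{VW}(z)=\mathbb{E}\big[|W|^{-1}g(z/W)\big]$, because for each fixed $w\neq0$ the map $z\mapsto g(z/w)$ is non-increasing on $(0,\infty)$ — directly if $w>0$, and because $z/w<0$ decreases as $z$ increases while $g$ is non-decreasing on $(-\infty,0)$ if $w<0$ — and this is preserved under the averaging over $W$; the case $z<0$ is symmetric.

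The symmetric case $\beta_1=\cdots=\beta_N=0$ then falls out at once: by~(\ref{simp}), $f_Z(z)$ is a positive constant times $G^{2N,0}_{0,2N}\big(\tfrac{\xi^2}{4^N}z^2\,\big|\,{-\atop 0,\ldots,0,m_1,\ldots,m_N}\big)$, and this Meijer $G$-function, evaluated at $\tfrac{\xi^2}{4^N}z^2$, is (up to a constant) the density at that point of a product of $N$ independent $\mathrm{Exp}(1)$ variables and the variables $\Gamma(m_1+1,1),\ldots,\Gamma(m_N+1,1)$; applying the $(0,\infty)$-part of the lemma with $V\sim\mathrm{Exp}(1)$ shows this density is non-increasing, so $f_Z$ is non-increasing in $|z|$. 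For general skewness parameters the lemma reduces everything to $N=2$: once $X_1X_2$ is known to have a unimodal-about-$0$ density, writing $Z=(X_1X_2)X_3\cdots X_N$ and applying the lemma $N-2$ times (with $V$ the partial product) gives the result.

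The main obstacle is thus the base case $N=2$ with general $\beta_1,\beta_2$. The difficulty is genuine: in the series of Theorem~\ref{thm1} for $f_{X_1X_2}$, the terms with both summation indices positive are Meijer $G$-functions that are \emph{not} monotone on $(0,\infty)$ — they vanish at the origin, rise, and only then fall — so monotonicity of $f_{X_1X_2}$ cannot be read off term by term. I would handle this probabilistically: split into the four sign-sectors for $(X_1,X_2)$ and use that $\mathrm{VG}(m,\alpha,\beta,0)\stackrel{d}{=}\Gamma(m+\tfrac12,\alpha-\beta)-\Gamma(m+\tfrac12,\alpha+\beta)$, equivalently that the positive part of a VG density equals $x^{2m}$ times the Laplace transform of a positive measure supported in $(\alpha-\beta,\infty)$, so that $X_i\mid\{X_i>0\}$ is a scale mixture of $\Gamma(2m_i+1,\cdot)$ laws. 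The multiplicative convolution of two such mixtures reduces, via $\int_0^\infty x^{\mu-1}e^{-ax-b/x}\,\mathrm{d}x=2(b/a)^{\mu/2}K_\mu(2\sqrt{ab})$, to an integral of Bessel-$K$ expressions against the two mixing measures, and the crux is to show that this integral is non-increasing in $z$ \emph{even though the integrand is not} — here I expect the derivative identity $\tfrac{\mathrm{d}}{\mathrm{d}x}\big(x^\nu K_\nu(x)\big)=-x^\nu K_{\nu-1}(x)$ and the fact that the mixing measures extend to $+\infty$ (which is what produces the blow-up of $f_Z$ at $z=0$ in parts~(i)--(ii)) to be the decisive ingredients.
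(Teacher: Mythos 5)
Your write-up addresses only part (iii) of the proposition; parts (i) and (ii) are taken as inputs (``parts (i) and (ii) already give $f_Z(z)\to\infty$''), yet these two asymptotic formulas are the substantive content of the statement. The paper proves them by writing the Meijer $G$-functions in the series (\ref{eq:1}) as Mellin--Barnes contour integrals via (\ref{mdef}) and extracting the dominant residue: in case (i) the pole at $s=0$ has order $N+t$ and, using $x^{-2s}=\sum_i(-2\ln x)^is^i/i!$ together with $\Gamma(s)=1/s-\gamma+o(1)$, yields the $(-\ln|z|)^{N+t-1}$ growth; in case (ii) the dominant pole is at $s=-m_1$ of order $t$, yielding $|z|^{2m_1}(-2\ln|z|)^{t-1}/(t-1)!$. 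The explicit constants in (\ref{lim1}) and (\ref{lim2}) come from this residue computation plus the check that only the $j_1=\cdots=j_N=0$ term of the series contributes at leading order. None of this appears in your proposal, so the two limiting forms are simply unproved.

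For part (iii) itself, your Khinchine-type lemma is correct, and your symmetric-case argument (identifying $G^{2N,0}_{0,2N}$ with a product of Gamma densities and peeling off an $\mathrm{Exp}(1)$ factor) genuinely proves monotonicity of $f_Z$ on each side of the origin there --- more than the paper claims. But the base case $N=2$ with general $\beta_1,\beta_2$ is left as an admitted ``crux'' (``I expect \dots to be the decisive ingredients''), and this gap cannot be closed because the monotonicity you are after is false in general. Take $m_1,m_2$ large and $\beta_i$ close to $\alpha_i$: each factor density concentrates near a large positive mode $x_0\approx(m-1/2)/(\alpha-\beta)$, while by (\ref{lim1}) the coefficient of the logarithmic singularity of $f_{X_1X_2}$ at $0$ is proportional to $\gamma_1\gamma_2(\gamma_1/\alpha_1)^{2m_1}(\gamma_2/\alpha_2)^{2m_2}$ and is therefore vanishingly small. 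The product density then has a thin logarithmic spike at $0$, is exponentially small at intermediate points such as $z\approx x_0$, and has a substantial local maximum near $z\approx x_0^2$, so it is not non-increasing on $(0,\infty)$. The paper's part (iii) is the weaker assertion that the argmax of the density sits at $0$: by (i) and (ii) the density is unbounded at the origin whenever $N\geq2$, and it is bounded away from the origin --- a two-line consequence of the parts you skipped, and all that is actually claimed.
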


\begin{comment}
\begin{remark} From Proposition \ref{vg0} we get the following limiting forms for $N=2$. As $z\rightarrow0$,
\begin{align*}
f_Z(z)\sim  \begin{cases}\displaystyle - \frac{\gamma_1^{2m_1+1} \gamma_2^{2m_2+1}   }{2 \pi \alpha_1^{2m_1} \alpha_2^{2m_2}} \frac{\Gamma(m_1)\Gamma(m_2)}{\Gamma(m_1+1/2) \Gamma(m_2+1/2) }\ln |z|, &   \: m_1,m_2> 0, \\[10pt]
\displaystyle\frac{\gamma_1 \gamma_2^{2m_2+1}   }{2 \pi^{3/2}  \alpha_2^{2m_2}    }  \frac{\Gamma(m_2)}{\Gamma(m_2+1/2)}(\ln |z|)^2, & \:  m_1 = 0,\,m_2>0, \\[10pt]
\displaystyle -\frac{\gamma_1 \gamma_2 }{3 \pi^2  } (\ln |z|)^3, & \: m_1=m_2=0, \\[10pt]
\displaystyle\frac{\gamma_1^{2m_1+1} \gamma_2^{2m_2+1}  }{2^{4m_1+2} \pi\alpha_2^{2(m_2-m_1)}   } \frac{(\Gamma(-m_1))^2 \Gamma(m_2-m_1)}{\Gamma(m_1+1/2) \Gamma(m_2+1/2)}  |z|^{2m_1}, & \:  m_1<0,\,m_1<m_2, \\[10pt]
\displaystyle-\frac{(\gamma_1\gamma_2)^{2m_1+1}}{2^{4m_1+1} \pi   }  \frac{(\Gamma(-m_1))^2}{(\Gamma(m_1+1/2))^2} |z|^{2m_1}\ln|z|, & \: m_1=m_2<0.
\end{cases}   
\end{align*}
Here we have corrected a typo from part 4 of Proposition 2.13 of \cite{gl24} in which the limiting form in the case $m_1<0$, $m_1<m_2$ was missing a factor of $\alpha_2^{2(m_1-m_2)}$.
\end{remark}
\end{comment}

\begin{proof}
%\textcolor{blue}{Might need to compile the proof of Proposition 2.13 in \cite{gl24} as well.} 

Recall that the density function (\ref{eq:1}) for the product $Z$ is an infinite summation of $G$-functions. Set $x=\xi z/2^N$. We will derive the limiting forms (\ref{lim1}) and (\ref{lim2}) by considering the
asymptotic behaviour of the functions
\begin{equation*}
	g_{j_1,\ldots, j_N, m_1,\ldots,m_N}(x)=G^{2N,0}_{0,2N} \bigg(x^2\, \bigg|\, \begin{aligned} -\atop  \tfrac{j_1}{2},\ldots,\tfrac{j_N}{2}, m_1+\tfrac{j_1}{2}, \ldots, m_N+\tfrac{j_N}{2}  \end{aligned}   \bigg),
    %\label{gfn}
\end{equation*}
in the limit $x\downarrow0$, where $j_1,\ldots,j_N\geq0$. Note that since $g_{j_1,\ldots, j_N, m_1,\ldots,m_N}(x)$ is an even function in $x$ it does indeed suffice to consider the limit $x\downarrow0$. We can express $g_{j_1,\ldots, j_N, m_1,\ldots,m_N}(x)$ as a contour integral using (\ref{mdef}):
\begin{align}
	g_{j_1,\ldots, j_N, m_1,\ldots,m_N}(x)&=\frac{1}{2\pi\mathrm{i}}\int_{L}\bigg\{\prod_{i=1}^{N}\Gamma\Big(s+\frac{j_i}{2}\Big)\Gamma\Big(s+m_i+\frac{j_i}{2}\Big)\bigg\}x^{-2s}\,\mathrm{d}s \nonumber \\
    &=\sum_{s^*\in B}\mathrm{Res}\bigg[\bigg\{\prod_{i=1}^{N}\Gamma\Big(s+\frac{j_i}{2}\Big)\Gamma\Big(s+m_i+\frac{j_i}{2}\Big)\bigg\}x^{-2s}, \,s=s^*\bigg],\label{res}
\end{align}
where the contour $L$ is a loop encircling the poles of the gamma functions and $B$ is the set of these poles.  
We compute the residues in (\ref{res}) for the two cases (\text{i}) and (ii) separately.
%which impose different conditions on $m_1,\ldots,m_N$, as given in the statement of the proposition. 

\vspace{2mm}

\noindent{(i)} Suppose $m_1,\ldots,m_N\geq0$ with $m_1=\cdots=m_t=0$ and $m_{t+1},\ldots ,m_N>0$. We will first consider the case $j_1=\cdots=j_N=0$. We calculate
\begin{align}
	\mathrm{Res}\bigg[(\Gamma(s))^{t+N}\bigg\{\prod_{i=t+1}^{N}\Gamma(s+m_i)\bigg\}x^{-2s},\, s=s^*\bigg]\label{res1}
\end{align}
for all poles $s^*$ of the gamma functions in (\ref{res1}). We begin with the pole at $s=0$. 
%We calculate the residue at this pole by determining the coefficient of $s^{-1}$ in the Laurent expansion of 
%\begin{align}
	%(\Gamma(s))^{t+N}\bigg\{\prod_{i=t+1}^{N}\Gamma(s+m_i)\bigg\}x^{-2s}.\label{gam1}
%\end{align} 
 On using the expansion
$x^{-2s} =\exp(-2s\ln(x)) = \sum^\infty_{i=0} (-2 \ln (x))^i  s^i/i!$, and the asymptotic expansion $\Gamma(s)=1/s-\gamma +o(1)$ as $s\rightarrow0$ ($\gamma$ is the Euler-Mascheroni constant), we obtain that,
%an asymptotic approximation for the coefficient of $s^{-1}$ in the Laurent expansion of (\ref{gam1}) in the limit $x\downarrow0$, from which it follows that, 
as $x\downarrow0$, 
\begin{align}
	\mathrm{Res}\bigg[(\Gamma(s))^{t+N}\bigg\{\prod_{i=t+1}^{N}\Gamma(s+m_i)\bigg\}x^{-2s},\, s=0\bigg]\sim\bigg\{\prod_{i=t+1}^{N}\Gamma(m_{i})\bigg\}\frac{(-2\ln(x))^{N+t-1}}{(N+t-1)!}.\label{res2}
\end{align}
 It is readily seen that for all other poles the residue (\ref{res1}) is of a smaller asymptotic
order in the limit $x\downarrow0$ than (\ref{res2}). It is also easily seen that for all other choices of $j_1,\ldots,j_N\geq0$, besides the case we considered of $j_1=\cdots=j_N=0$, that the residues in (\ref{res}) are of a smaller asymptotic order in the limit $x\downarrow0$ than  (\ref{res1}). Thus, as $x\rightarrow0$ (equivalently as $z\rightarrow0$),
\begin{align}
	g_{0,\ldots, 0, m_{t+1}, \ldots, m_N}(x)
    &\sim\bigg\{\prod_{i=t+1}^{N}\Gamma(m_{i})\bigg\}\frac{(-2\ln|x|)^{N+t-1}}{(N+t-1)!}\nonumber\\
    &\sim\bigg\{\prod_{i=t+1}^{N}\Gamma(m_{i})\bigg\}\frac{(-2\ln|z|)^{N+t-1}}{(N+t-1)!}.\label{glim}
\end{align}
Thus, since $g_{j_1,\ldots, j_N, m_1,\ldots,m_N}(x)$ is of smaller asymptotic order in the limit $x\rightarrow0$ (equivalently $z\rightarrow0$) for all other choices of $j_1,\ldots,j_N\geq0$, on multiplying (\ref{glim}) through by the multiplicative constant of $g_{0,\ldots, 0, m_{t+1}, \ldots, m_N}(x)$ from the PDF (\ref{eq:1}) we obtain the limiting form (\ref{lim1}). 

\vspace{2mm}

\noindent{(ii)} Suppose $m_1=\min\{m_1,\ldots,m_N\}<0$ and that $m_1=\cdots=m_t<m_{t+1},\ldots,m_N$ for $1\leq t\leq N$.
Since $m_1<0$, the dominant residue of (\ref{res}) is now  at the pole $s=-m_1$, and we have 
\begin{comment}
To find the residue at this pole, it suffices to find the coefficient of $(s+m_1)^{-1}$ in the Laurent expansion of 
\begin{align}
	(\Gamma(s))^{N}(\Gamma(s+m_1))^t\prod_{i=t+1}^{N}\Gamma(s+m_i)x^{-2s}.\label{gam3}
\end{align}
By using the expansion (\ref{gamma}) we can write (\ref{gam3}) as 
\begin{align*}
	(\Gamma (-m_1))^N\bigg( \frac{1}{s+m_1}+\gamma+o(1) \bigg)^t \prod^N_{i=t+1} \Gamma(m_i-m_1)  x^{2m_1},
\end{align*}
and by some further simple calculations, it is easy to see that the coefficient of $(s+m_1)^{-1}$ is given by
	\begin{align*}
		\Big( \Gamma (-m_1)  \Big)^Nt \gamma^{t-1}  \prod^N_{i=t+1} \Gamma\Big(m_i-m_1\Big)  x^{2m_1}.
	\end{align*}
	Therefore
	\end{comment}
	\begin{align*}
		&\mathrm{Res}\bigg[(\Gamma(s))^{N}(\Gamma(s+m_1))^t\bigg\{\prod_{i=t+1}^{N}\Gamma(s+m_i)\bigg\}x^{-2s},\, s=-m_1\bigg]\\
		&\quad=x^{2m_1}\mathrm{Res}\bigg[(\Gamma(u-m_1))^{N}(\Gamma(u))^t\bigg\{\prod_{i=t+1}^{N}\Gamma(u-m_1+m_i)\bigg\}x^{-2u},\, u=0\bigg]\\
		&\quad\sim\big( \Gamma (-m_1)  \big)^N\frac{(-2\ln(x))^{t-1}}{(t-1)!} \bigg\{\prod^N_{i=t+1} \Gamma\Big(m_i-m_1\Big)\bigg\}  x^{2m_1}, \quad x\downarrow0,
	\end{align*}
and all other residues are of lower order as $x\downarrow0$.    
	On arguing similarly to in part $(i)$
    %, as $x\rightarrow0$,
	%\begin{align}
	%	g_{0,\ldots,0,m_1,\ldots,m_1,m_{t+1},\ldots,m_N}(x)&\sim\big( \Gamma (-m_1)  \big)^N\frac{(-2\ln(x))^{t-1}}{(t-1)!} \bigg\{\prod^N_{i=t+1} \Gamma\Big(m_i-m_1\Big)\bigg\}  x^{2m_1}\nonumber\\
	%	&\sim\big( \Gamma (-m_1)  \big)^N\frac{(-2\ln(z))^{t-1}}{(t-1)!} \bigg\{\prod^N_{i=t+1} \Gamma\Big(m_i-m_1\Big)\bigg\}\bigg(\frac{\alpha z}{2^{N}}\bigg)^{2m_1}\nonumber,
	%\end{align}
	%and we thus 
    we obtain the limiting form (\ref{lim2}).
	%This completes the scenarios ($a = 2^{k-1} ,x=\frac{\alpha}{2^k} z $). We also note that the asymptotic behaviour is slower for any $j_i \neq 0$, $ 2 \leq i \leq k.$ 

\vspace{2mm}

\noindent (iii) Parts (i) and (ii) imply that the PDF $f_Z(z)$ has a singularity at $z=0$ for all parameter values. Moreover, the PDF is bounded everywhere except for the singularity at the origin, and therefore the distribution of $Z$ is unimodal with mode 0 for all parameter constellations.
\end{proof}

\subsubsection{Tail behaviour of the distribution}

\begin{proposition}\label{p2.9} Suppose $m_1,\ldots,m_N>-1/2$, $0\leq|\beta_i|<\alpha_i$, $i=1,\ldots,N.$ Denote the complementary CDF by $\bar{F}_Z(z)=\mathbb{P}(Z>z)$. Also, let $\mu_N=N^{-1}\sum_{i=1}^Nm_i$. Then
\begin{align}
	\bar{F}_Z(z)&\sim\frac{(2\pi)^\frac{N-1}{2}\eta}{\sqrt{N}}\bigg\{\prod_{i=1}^{N}\frac{\gamma_i^{2m_i+1}}{(2\alpha_i)^{m_i+1/2}}\bigg\}z^{\mu_N-\tfrac{1}{2N}}\nonumber\\
    &\quad\times\!\sum_{\sigma \in S_N^+}\!\bigg\{\prod_{k \in \sigma}\prod_{l \in \sigma^c}(\lambda_k^+)^{\mu_N-m_k-\tfrac{N+1}{2N}}(\lambda_l^-)^{\mu_N-m_l-\tfrac{N+1}{2N}}\bigg\}\mathrm{e}^{-N\omega_\sigma^{1/N}z^{1/N}}, \:\,\,z\rightarrow\infty,\label{lim3}\\
	{F}_Z(z)&\sim\frac{(2\pi)^\frac{N-1}{2}\eta}{\sqrt{N}}\bigg\{\prod_{i=1}^{N}\frac{\gamma_i^{2m_i+1}}{(2\alpha_i)^{m_i+1/2}}\bigg\}(-z)^{\mu_N-\tfrac{1}{2N}}\nonumber\\
    &\quad\times\!\sum_{\sigma \in S_N^-}\!\bigg\{\prod_{k \in \sigma}\prod_{l \in \sigma^c}(\lambda_k^+)^{\mu_N-m_k-\tfrac{N+1}{2N}}(\lambda_l^-)^{\mu_N-m_l-\tfrac{N+1}{2N}}\bigg\}\mathrm{e}^{-N\omega_\sigma^{1/N}(-z)^{1/N}},\:\,\, z\rightarrow-\infty.\label{lim4}
\end{align}
\end{proposition}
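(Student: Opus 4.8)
The plan is to reduce the two-sided tail of $Z$ to right tails of products of $N$ independent \emph{positive} random variables, and then invoke the known large-argument asymptotics of the Meijer $G$-function. First I would split according to the sign pattern of $(X_1,\ldots,X_N)$: for $z>0$, on the event $A_\sigma=\{X_i<0\ \text{for }i\in\sigma,\ X_i>0\ \text{for }i\in\sigma^c\}$ we have $Z=(-1)^{|\sigma|}W_\sigma$ with $W_\sigma:=\prod_{k\in\sigma}X_k^-\prod_{l\in\sigma^c}X_l^+$, and since $\{W_\sigma>z\}\subseteq A_\sigma$ when $z>0$, only $\sigma\in S_N^+$ contribute and $\bar F_Z(z)=\sum_{\sigma\in S_N^+}\mathbb{P}(W_\sigma>z)$ (the probabilistic counterpart of the Mellin identity (\ref{eq:2})). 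The same reasoning gives $F_Z(z)=\sum_{\sigma\in S_N^-}\mathbb{P}(W_\sigma>-z)$ for $z<0$. It therefore suffices to obtain, for each fixed $\sigma$, the behaviour of $\mathbb{P}(W_\sigma>w)$ as $w\to\infty$.

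Next I would record that each factor of $W_\sigma$ has a gamma-type tail: from (\ref{vgpdf}) and $K_m(x)=\sqrt{\pi/(2x)}\,\mathrm{e}^{-x}(1+O(1/x))$ as $x\to\infty$, the densities of $X_l^+$ and $X_k^-$ satisfy $f_{X_l^+}(y)\sim c_l\,y^{m_l-1/2}\mathrm{e}^{-\lambda_l^-y}$ and $f_{X_k^-}(y)\sim c_k\,y^{m_k-1/2}\mathrm{e}^{-\lambda_k^+y}$ as $y\to\infty$, where $c_i=\gamma_i^{2m_i+1}/(\Gamma(m_i+1/2)(2\alpha_i)^{m_i+1/2})$, while near the origin each such density is integrable (of size at most $O(y^{2m_i})$, or $O(\ln(1/y))$ when $m_i=0$). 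Thus $W_\sigma$ is a product of $N$ independent positive random variables, the $i$-th of which has a density $g_i$ with $g_i(y)\sim c_i\,y^{m_i-1/2}\mathrm{e}^{-\theta_iy}$ as $y\to\infty$, where $\theta_i=\lambda_i^+$ for $i\in\sigma$ and $\theta_i=\lambda_i^-$ for $i\in\sigma^c$, so that $\prod_{i=1}^N\theta_i=\omega_\sigma$.

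The core of the argument is the tail of such a product. Fix $\epsilon>0$ and pick $y_\epsilon$ with $(1-\epsilon)\tilde g_i(y)\le g_i(y)\le(1+\epsilon)\tilde g_i(y)$ for all $y>y_\epsilon$ and all $i$, where $\tilde g_i(y):=c_i\,y^{m_i-1/2}\mathrm{e}^{-\theta_iy}$. On the event that some factor $Y_i\le y_\epsilon$, the event $\{W_\sigma>w\}$ forces $\prod_{j\ne i}Y_j>w/y_\epsilon$, hence (using only that each factor has an exponential tail, so that some $Y_j$ must exceed $(w/y_\epsilon)^{1/(N-1)}$) has probability $O\big(\mathrm{e}^{-\delta w^{1/(N-1)}}\big)=o\big(\mathrm{e}^{-N\omega_\sigma^{1/N}w^{1/N}}\big)$ for some $\delta>0$, and the same bound holds with each $g_i$ replaced by $\tilde g_i$. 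On the complementary event the joint density is squeezed between $(1\pm\epsilon)^N\prod_i\tilde g_i$, so $\mathbb{P}(W_\sigma>w)=(1+O(\epsilon))\int_{\{\prod_iy_i>w\}}\prod_i\tilde g_i(y_i)\,\mathrm{d}y_i+o\big(\mathrm{e}^{-N\omega_\sigma^{1/N}w^{1/N}}\big)$. The substitution $y_i\mapsto y_i/\theta_i$ turns this integral into $\big(\prod_ic_i\theta_i^{-(m_i+1/2)}\big)\eta^{-1}\,\mathbb{P}\big(\prod_iG_i>w\,\omega_\sigma\big)$, where $G_1,\ldots,G_N$ are independent with $G_i\sim\Gamma(m_i+1/2,1)$; matching Mellin transforms shows that $\prod_iG_i$ has density $\eta\,G^{N,0}_{0,N}\big(\,\cdot\,\big|\,{-\atop m_1-\tfrac{1}{2},\ldots,m_N-\tfrac{1}{2}}\big)$, so integrating the classical asymptotic $G^{N,0}_{0,N}\big(x\,\big|\,{-\atop b_1,\ldots,b_N}\big)\sim\frac{(2\pi)^{(N-1)/2}}{\sqrt N}\,x^{\frac1N\sum_{i=1}^Nb_i-\frac{N-1}{2N}}\mathrm{e}^{-Nx^{1/N}}$ (see, e.g., \cite{luke}) gives $\mathbb{P}\big(\prod_iG_i>x\big)\sim\frac{(2\pi)^{(N-1)/2}\eta}{\sqrt N}\,x^{\mu_N-\frac1{2N}}\mathrm{e}^{-Nx^{1/N}}$. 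Letting $\epsilon\downarrow0$, and using $\prod_ic_i\theta_i^{-(m_i+1/2)}\cdot\omega_\sigma^{\mu_N-\frac1{2N}}=\eta\big\{\prod_i\gamma_i^{2m_i+1}/(2\alpha_i)^{m_i+1/2}\big\}\prod_i\theta_i^{\mu_N-m_i-\frac{N+1}{2N}}$ together with $\omega_\sigma=\prod_i\theta_i$, one gets, for each $\sigma$,
\begin{align*}
\mathbb{P}(W_\sigma>w)&\sim\frac{(2\pi)^{(N-1)/2}\eta}{\sqrt N}\bigg\{\prod_{i=1}^N\frac{\gamma_i^{2m_i+1}}{(2\alpha_i)^{m_i+1/2}}\bigg\}w^{\mu_N-\frac1{2N}}\\
&\quad\times\bigg\{\prod_{k\in\sigma}(\lambda_k^+)^{\mu_N-m_k-\frac{N+1}{2N}}\prod_{l\in\sigma^c}(\lambda_l^-)^{\mu_N-m_l-\frac{N+1}{2N}}\bigg\}\mathrm{e}^{-N\omega_\sigma^{1/N}w^{1/N}}.
\end{align*}
Summing over $\sigma\in S_N^+$ (a finite sum of positive functions, so term-by-term asymptotic equivalence is preserved) gives (\ref{lim3}), and summing over $\sigma\in S_N^-$ with $w=-z$ gives (\ref{lim4}).

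I expect the main obstacle to be in the third paragraph: making precise that the single-factor tail equivalence propagates to the $N$-fold product while rigorously discarding the boundary regions where some factor is $O(1)$, and invoking the Meijer $G$-function asymptotics with exactly the right sub-exponential constant $(2\pi)^{(N-1)/2}/\sqrt N$ and algebraic exponent, after which the bookkeeping of the powers of $w$ and of the $\lambda_i^\pm$ must be done carefully. Equivalently, one could prove the product-tail estimate directly by a multivariate Laplace argument applied to the $(N-1)$-fold convolution of the densities of $\log Y_1,\ldots,\log Y_N$: the maximiser on $\{\sum_iu_i=\log w\}$ sits near $u_i^*=\tfrac1N\log(w\,\omega_\sigma)-\log\theta_i$, the Hessian there is $\approx-(w\omega_\sigma)^{1/N}(I_{N-1}+\mathbf{1}\mathbf{1}^{\top})$ with determinant $N(w\omega_\sigma)^{(N-1)/N}$, and one must check that the $O(1)$ corrections coming from the polynomial prefactors cancel so that the exponential rate is precisely $N\omega_\sigma^{1/N}w^{1/N}$.
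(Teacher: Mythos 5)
Your proposal is correct and arrives at exactly the right constants, but it proves the key tail estimate by a genuinely different route than the paper. Both arguments start from the same (essentially forced) decomposition $\bar F_Z(z)=\sum_{\sigma\in S_N^+}\mathbb{P}(W_\sigma>z)$ into sign patterns, where $W_\sigma=\prod_{k\in\sigma}X_k^-\prod_{l\in\sigma^c}X_l^+$. From there the paper obtains $\mathbb{P}(W_\sigma>z)$ by induction on $N$, feeding one exponential-tailed factor at a time into Lemma 2.1 of Arendarczyk and D\c{e}bicki on products of two independent Weibull-tailed variables, with the $N=2$ base case imported from \cite{gl24}; you instead prove the $N$-fold product tail directly, by sandwiching each factor's density between multiples of its gamma-type tail $c_iy^{m_i-1/2}\mathrm{e}^{-\theta_iy}$, discarding the region where some factor is $O(1)$ (your bound $O(\mathrm{e}^{-\delta w^{1/(N-1)}})=o(\mathrm{e}^{-Cw^{1/N}})$ is the right reason this region is negligible), rescaling to a product of independent $\Gamma(m_i+1/2,1)$ variables, and invoking the large-$x$ asymptotics (\ref{asymg}) of $G^{N,0}_{0,N}$ together with the elementary estimate $\int_x^\infty u^a\mathrm{e}^{-bu^r}\,\mathrm{d}u\sim(br)^{-1}x^{a-r+1}\mathrm{e}^{-bx^r}$. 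Your route is more self-contained (it needs neither the external lemma nor the $N=2$ result, and it makes transparent where the constant $(2\pi)^{(N-1)/2}/\sqrt{N}$ and the exponents $\mu_N-m_i-\tfrac{N+1}{2N}$ come from), at the cost of having to justify the density-sandwiching and boundary-region estimates yourself; the paper's induction outsources precisely that analytic work to \cite{ad11}. Two trivial remarks: the parenthetical ``of size at most $O(y^{2m_i})$'' near the origin is only accurate for $m_i<0$ (for $m_i>0$ the density of $X_i^{\pm}$ tends to a positive constant there), but integrability is all you use, so nothing breaks; and the $g_i$ are sub-probability densities because $X_i^{\pm}$ has an atom at $0$, which again is harmless since the atom contributes nothing to $\{W_\sigma>w\}$ for $w>0$.
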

\begin{corollary}\label{cor2.11}Suppose $m_1,\ldots,m_N>-1/2$, $0\leq|\beta_i|<\alpha_i$, $i=1,\ldots,N.$ Then
\begin{align}
	f_Z(z)&\sim\frac{(2\pi)^\frac{N-1}{2}\eta}{\sqrt{N}}\bigg\{\prod_{i=1}^{N}\frac{\gamma_i^{2m_i+1}}{(2\alpha_i)^{m_i+1/2}}\bigg\}z^{\mu_N+\tfrac{1}{2N}-1}\nonumber\\
    &\quad\times\!\sum_{\sigma \in S_N^+}\!\bigg\{\prod_{k \in \sigma}\prod_{l \in \sigma^c}(\lambda_k^+)^{\mu_N-m_k+\tfrac{1-N}{2N}}(\lambda_l^-)^{\mu_N-m_l+\tfrac{1-N}{2N}}\bigg\}\mathrm{e}^{-N\omega_\sigma^{1/N}z^{1/N}},\:\,\, z\rightarrow\infty,\label{lim5}\\
	f_Z(z)&\sim\frac{(2\pi)^\frac{N-1}{2}\eta}{\sqrt{N}}\bigg\{\prod_{i=1}^{N}\frac{\gamma_i^{2m_i+1}}{(2\alpha_i)^{m_i+1/2}}\bigg\}(-z)^{\mu_N+\tfrac{1}{2N}-1}\nonumber\\
    &\quad\times\!\sum_{\sigma \in S_N^-}\!\bigg\{\prod_{k \in \sigma}\prod_{l \in \sigma^c}(\lambda_k^+)^{\mu_N-m_k+\tfrac{1-N}{2N}}(\lambda_l^-)^{\mu_N-m_l+\tfrac{1-N}{2N}}\bigg\}\mathrm{e}^{-N\omega_\sigma^{1/N}(-z)^{1/N}},\:\,\, z\rightarrow-\infty.\label{lim6}
\end{align}
\end{corollary}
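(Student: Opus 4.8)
The plan is to deduce (\ref{lim5}) and (\ref{lim6}) from the tail‑probability asymptotics (\ref{lim3}) and (\ref{lim4}) of Proposition \ref{p2.9} by differentiation. Since $\bar{F}_Z(z)=\int_z^{\infty}f_Z(t)\,\mathrm{d}t$ for $z>0$ and $F_Z(z)=\int_{-\infty}^{z}f_Z(t)\,\mathrm{d}t$ for $z<0$, we have $f_Z(z)=-\bar{F}_Z'(z)$ for $z>0$ and $f_Z(z)=F_Z'(z)$ for $z<0$, so it suffices to show that the asymptotic relations of Proposition \ref{p2.9} may be differentiated, and then to carry out the differentiation. The legitimacy of differentiating follows from a monotone density theorem (in the form valid for smoothly varying tails; see, e.g., the monograph of Bingham, Goldie and Teugels), whose only non‑immediate hypothesis is that $f_Z$ is ultimately monotone on each of $(0,\infty)$ and $(-\infty,0)$; I return to this point below.

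Granting this, the remaining computation is routine. Write the right‑hand side of (\ref{lim3}) as $C\,z^{\mu_N-\frac{1}{2N}}\sum_{\sigma\in S_N^+}d_\sigma\,\mathrm{e}^{-N\omega_\sigma^{1/N}z^{1/N}}$, where $C=\frac{(2\pi)^{(N-1)/2}\eta}{\sqrt{N}}\prod_{i=1}^{N}\frac{\gamma_i^{2m_i+1}}{(2\alpha_i)^{m_i+1/2}}$ and $d_\sigma=\prod_{k\in\sigma}\prod_{l\in\sigma^c}(\lambda_k^+)^{\mu_N-m_k-\frac{N+1}{2N}}(\lambda_l^-)^{\mu_N-m_l-\frac{N+1}{2N}}$. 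For each fixed $\sigma$,
\begin{align*}
\frac{\mathrm{d}}{\mathrm{d}z}\Big(z^{\mu_N-\frac{1}{2N}}\mathrm{e}^{-N\omega_\sigma^{1/N}z^{1/N}}\Big)&=\Big(\mu_N-\tfrac{1}{2N}\Big)z^{\mu_N-\frac{1}{2N}-1}\mathrm{e}^{-N\omega_\sigma^{1/N}z^{1/N}}-\omega_\sigma^{1/N}z^{\mu_N+\frac{1}{2N}-1}\mathrm{e}^{-N\omega_\sigma^{1/N}z^{1/N}}\\
&\sim -\omega_\sigma^{1/N}z^{\mu_N+\frac{1}{2N}-1}\mathrm{e}^{-N\omega_\sigma^{1/N}z^{1/N}},\qquad z\to\infty,
\end{align*}
since $-\tfrac{1}{2N}+\tfrac{1}{N}=\tfrac{1}{2N}$ and so the second term dominates. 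Hence $f_Z(z)=-\bar{F}_Z'(z)\sim C\,z^{\mu_N+\frac{1}{2N}-1}\sum_{\sigma\in S_N^+}d_\sigma\,\omega_\sigma^{1/N}\,\mathrm{e}^{-N\omega_\sigma^{1/N}z^{1/N}}$ as $z\to\infty$. Finally, $\omega_\sigma=\prod_{k\in\sigma}(\alpha_k+\beta_k)\prod_{l\in\sigma^c}(\alpha_l-\beta_l)=\prod_{k\in\sigma}\lambda_k^+\prod_{l\in\sigma^c}\lambda_l^-$, so multiplying $d_\sigma$ by $\omega_\sigma^{1/N}$ raises the exponent of every $\lambda^{\pm}$ factor by $\tfrac{1}{N}$, replacing $\mu_N-m_k-\tfrac{N+1}{2N}$ by $\mu_N-m_k+\tfrac{1-N}{2N}$; thus $d_\sigma\omega_\sigma^{1/N}$ is exactly the product appearing in (\ref{lim5}), which gives (\ref{lim5}). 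Relation (\ref{lim6}) is obtained in the same way from (\ref{lim4}), with $S_N^+$ replaced by $S_N^-$ and $z$ replaced by $-z$.

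The step that requires genuine care — and which I expect to be the main obstacle — is the ultimate monotonicity of $f_Z$ underlying the monotone density theorem. I would extract this from the series representation (\ref{eq:1}): for $z>0$ the coefficients $a_{j_1,\ldots,j_N}(z)$ are constant in $z$, each summand $z\mapsto G^{2N,0}_{0,2N}\!\big(\tfrac{\xi^2}{4^N}z^2\,\big|\,\cdots\big)$ is eventually decreasing (its derivative is eventually negative by the large‑argument asymptotics of $G^{2N,0}_{0,2N}$, every summand moreover carrying the common factor $\mathrm{e}^{-N\xi^{1/N}z^{1/N}}$), and a uniform estimate on the differentiated series then gives $f_Z'(z)<0$ for large $z$; the case $z\to-\infty$ is symmetric. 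Alternatively, one can avoid the monotonicity issue altogether by proving (\ref{lim5})--(\ref{lim6}) directly in the spirit of Theorem \ref{thm1}: Watson's lemma applied to the VG tail $f_{X_i}(x)\sim\frac{\gamma_i^{2m_i+1}}{(2\alpha_i)^{m_i+1/2}\Gamma(m_i+1/2)}x^{m_i-1/2}\mathrm{e}^{-\lambda_i^- x}$ gives $\mathcal{M}_{f_{X_l^+}}(s)\sim\frac{\gamma_l^{2m_l+1}}{(2\alpha_l)^{m_l+1/2}\Gamma(m_l+1/2)}(\lambda_l^-)^{-s-m_l-1/2}\Gamma\!\big(s+m_l+\tfrac12\big)$ as $s\to+\infty$ (and similarly for $\mathcal{M}_{f_{X_k^-}}(s)$ with $\lambda_k^+$), so by (\ref{eq:2}) the leading large‑$s$ behaviour of $\mathcal{M}_{f_{Z^+}}(s)$ is a constant multiple of $\sum_{\sigma\in S_N^+}(\text{const}_\sigma)\big\{\prod_{i=1}^{N}\Gamma(s+m_i+\tfrac12)\big\}\omega_\sigma^{-s}$, whose inverse Mellin transform equals $\sum_{\sigma\in S_N^+}(\text{const}_\sigma)\,z^{-1}G^{N,0}_{0,N}\!\big(\omega_\sigma z\,\big|\,-;m_1+\tfrac12,\ldots,m_N+\tfrac12\big)$; the standard asymptotic $G^{N,0}_{0,N}(w\,|\,\cdots)\sim\frac{(2\pi)^{(N-1)/2}}{\sqrt N}w^{\mu_N+\frac{1}{2N}}\mathrm{e}^{-Nw^{1/N}}$ then reproduces (\ref{lim5}). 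On this second route the delicate point is instead the transfer from the leading large‑$s$ term of $\mathcal{M}_{f_{Z^+}}(s)$ to the large‑$z$ asymptotics of $f_{Z^+}$, i.e.\ controlling the error in the inversion integral. Everything else, on either route, is direct calculation.
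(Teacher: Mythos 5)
Your proposal is correct and follows essentially the same route as the paper: the paper's proof of Corollary \ref{cor2.11} likewise obtains (\ref{lim5}) and (\ref{lim6}) by differentiating the limiting forms (\ref{lim3}) and (\ref{lim4}) via $f_Z(z)=-\bar{F}_Z'(z)$ and $f_Z(z)=F_Z'(z)$ and retaining the leading-order term, and your explicit computation (in particular the exponent shift $-\tfrac{N+1}{2N}+\tfrac1N=\tfrac{1-N}{2N}$ coming from the factor $\omega_\sigma^{1/N}$) matches the stated result. Your additional discussion of why the asymptotic relation may legitimately be differentiated (ultimate monotonicity / a monotone density argument) addresses a point the paper passes over silently, and is a welcome strengthening rather than a departure.
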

\begin{remark} The limiting forms of Proposition \ref{p2.9} and Corollary \ref{cor2.11}  take a simpler form for certain parameter constellations. To illustrate this, we let $\omega_+=\min_{\sigma\in S_N^+} \omega_\sigma$, and suppose that this minimum is attained for just a single $\sigma\in S_N^+$. We then have that $\exp(-N\omega_\sigma^{1/N}z^{1/N})\ll\exp(-N\omega_+^{1/N}z^{1/N})$, as $z\rightarrow\infty$, for all $\sigma\in S_N^+$ except for the one at which the minimum value $\omega$ is attained. In this case, the limiting form (\ref{lim3}) reduces to 
\begin{align}
	\bar{F}_Z(z)\sim\frac{(2\pi)^\frac{N-1}{2}\eta}{\sqrt{N}}\bigg\{\prod_{i=1}^{N}\frac{\gamma_i^{2m_i+1}}{(2\alpha_i)^{m_i+1/2}}\bigg\}z^{\mu_N-\tfrac{1}{2N}}\mathrm{e}^{-N\omega_+^{1/N}z^{1/N}},\quad z\rightarrow\infty\nonumber.
\end{align}

%\vspace{2mm}

%\noindent 2. 

The limiting forms of the PDF and CDF of $Z$ also take a simpler form when $\beta_1=\cdots=\beta_N=0$. In this case,
\begin{align}
f_Z(z)&\sim\frac{\pi^{(N-1)/2}\xi\eta}{2^{N(\mu_N-1)+3/2}\sqrt{N}}(\xi|z|)^{\mu_N+\frac{1}{2N}-1}\mathrm{e}^{-N(\xi|z|)^{1/N}}, \quad |z|\rightarrow\infty, \label{limform1}\\    
\bar{F}_Z(z)&\sim\frac{\pi^{(N-1)/2}\eta}{2^{N(\mu_N-1)+3/2}\sqrt{N}}(\xi z)^{\mu_N-\frac{1}{2N}}\mathrm{e}^{-N(\xi z)^{1/N}}, \quad z\rightarrow\infty, \label{limform2}\\
F_Z(z)&\sim\frac{\pi^{(N-1)/2}\eta}{2^{N(\mu_N-1)+3/2}\sqrt{N}}(-\xi z)^{\mu_N-\frac{1}{2N}}\mathrm{e}^{-N(-\xi z)^{1/N}}, \quad z\rightarrow-\infty. \label{limform3}
\end{align}
These limiting forms can also be easily derived through the following alternative approach. The limiting form (\ref{limform1}) follows immediately on applying the limiting form (\ref{asymg}) to the formula (\ref{simp}) for the PDF of $Z$ in the case $\beta_1=\cdots=\beta_N=0$. To obtain (\ref{limform2}) we apply the limiting form (\ref{limform1}) to the formula $\bar{F}_Z(z)=\int_z^\infty f_Z(x) \,\mathrm{d}x$, and (\ref{limform2}) then follows since, for $a\in\mathbb{R}$ and $b,r>0$, we have that $\int_z^\infty x^a\mathrm{e}^{-bx^r}\,\mathrm{d}x\sim(br)^{-1}z^{a-r+1}\mathrm{e}^{-bz^r}$, as $z\rightarrow\infty$, which is easily proved by a standard integration by parts argument. 
The limiting form (\ref{limform3}) follows immediately from (\ref{limform2}) since the distribution of $Z$ is symmetric about the origin in the $\beta_1=\cdots=\beta_N=0$ case.

\end{remark}

We will make use of Lemma 2.1 of \cite{ad11} in proving  Proposition \ref{p2.9}.

\begin{lemma}[Arendarczyk and D\c{e}bicki \cite{ad11}]\label{lem21} Let $X_1$ and $X_2$ be independent, non-negative random variables such that $\bar{F}_{X_i}(x)\sim A_ix^{r_i}\exp(-b_ix^{a_i})$ as $x\rightarrow\infty$, for $i=1,2$.
Then
\[\bar{F}_{X_1X_2}(x)\sim  Ax^{r}\exp(-bx^{a}), \quad x\rightarrow\infty,\]
where, for $c:=a_1+a_2$,
\begin{align*}
a&=\frac{a_1a_2}{c},\quad b=b_1^{a_2/c}b_2^{a_1/c}\bigg(\Big(\frac{a_1}{a_2}\Big)^{a_2/c}+\Big(\frac{a_2}{a_1}\Big)^{a_1/c}\bigg),  \quad r=\frac{a_1a_2+2a_1r_2+2a_2r_1}{2c} \\
  A&=\frac{\sqrt{2\pi}A_1A_2}{\sqrt{c}}(a_1b_1)^{\frac{a_2-2r_1+2r_2}{2c}}(a_2b_2)^{\frac{a_1-2r_2+2r_1}{2c}}.
\end{align*}
\end{lemma}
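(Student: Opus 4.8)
The plan is to establish the lemma by the Laplace (saddle-point) method applied to the integral representation of the product tail. By independence and Fubini,
\[
\bar F_{X_1X_2}(x)=\int_0^\infty \bar F_{X_2}(x/u)\,dF_{X_1}(u).
\]
First I would convert the tail asymptotic of $X_1$ into a density asymptotic: under the (tacit) regularity that $f_{X_1}$ is eventually monotone, the monotone density theorem for rapidly varying tails gives $f_{X_1}(u)\sim A_1a_1b_1\,u^{r_1+a_1-1}\mathrm{e}^{-b_1u^{a_1}}$ as $u\to\infty$, the consistency being confirmed by a single integration by parts in $\int_u^\infty A_1a_1b_1t^{r_1+a_1-1}\mathrm{e}^{-b_1t^{a_1}}\,dt\sim A_1u^{r_1}\mathrm{e}^{-b_1u^{a_1}}$. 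Inserting this together with $\bar F_{X_2}(x/u)\sim A_2(x/u)^{r_2}\mathrm{e}^{-b_2(x/u)^{a_2}}$ reduces the problem to the asymptotics of
\[
\bar F_{X_1X_2}(x)\sim A_1A_2a_1b_1\,x^{r_2}\int_0^\infty u^{\,a_1+r_1-1-r_2}\,\mathrm{e}^{-\psi(u)}\,du,\qquad \psi(u):=b_1u^{a_1}+b_2x^{a_2}u^{-a_2}.
\]

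Next I would locate the saddle. Solving $\psi'(u)=0$ gives $u^\ast=(a_2b_2/(a_1b_1))^{1/c}x^{a_2/c}$ with $c=a_1+a_2$, and a direct computation shows that the minimal cost is $\psi(u^\ast)=b\,x^a$ with $a=a_1a_2/c$ and $b=b_1^{a_2/c}b_2^{a_1/c}\big((a_1/a_2)^{a_2/c}+(a_2/a_1)^{a_1/c}\big)$; thus the exponent $a$ and the rate $b$ emerge precisely as the minimiser and minimal value of $\psi$. Crucially $u^\ast\asymp x^{a_2/c}\to\infty$ and $x/u^\ast\asymp x^{a_1/c}\to\infty$, so the saddle lies deep inside the region where both substituted asymptotics hold.

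The algebraic prefactor then follows from a Gaussian expansion about $u^\ast$. Using the first-order condition one computes $\psi''(u^\ast)=c\,a_1b_1(u^\ast)^{a_1-2}$, so the local integral contributes $\sqrt{2\pi/\psi''(u^\ast)}$. Collecting the factor $A_1A_2a_1b_1x^{r_2}$, the power $(u^\ast)^{a_1+r_1-1-r_2}$ and this Gaussian width, the powers of $x$ sum to $r=(a_1a_2+2a_1r_2+2a_2r_1)/(2c)$, while the constants combine — with the $\sqrt{2\pi}/\sqrt c$ surviving from the width — into exactly the stated $A$. Repeating the argument with the roles of $X_1$ and $X_2$ interchanged yields the same $a$, $b$, $r$, $A$, a reassuring symmetry check.

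The main obstacle is to turn this formal Laplace computation into a rigorous estimate, since the substituted asymptotics are valid only for large arguments. I would truncate the $u$-integral to a neighbourhood $|u-u^\ast|\le\delta_xu^\ast$ of the saddle, with $\delta_x\downarrow0$ chosen slowly enough that $\delta_x\,\psi''(u^\ast)^{1/2}u^\ast\to\infty$; on this window I would show that $f_{X_1}(u)\bar F_{X_2}(x/u)$ is uniformly asymptotic to its surrogate and that the quadratic approximation to $\psi$ is uniformly valid. The complementary task is to bound the discarded mass, which splits into the regime of small $u$ (where the $b_2x^{a_2}u^{-a_2}$ term makes $\psi$ huge), the regime $u\gtrsim x$ (where $b_1u^{a_1}$ dominates), and the intermediate range; in each case the convexity of $\psi$ together with crude monotone bounds on the two tails shows the contribution is exponentially smaller than the saddle estimate. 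An equivalent route is to set $W_i=\ln X_i$ and analyse $\bar F_{W_1+W_2}$ as a convolution of double-exponentially decaying tails, recasting the same estimate as a standard one-dimensional Laplace asymptotic. Securing this uniform concentration, and justifying the monotone density theorem in the present rapidly varying setting, is the crux; once these are in place the constants follow mechanically.
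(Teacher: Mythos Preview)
The paper does not prove this lemma at all; it is quoted verbatim as Lemma~2.1 of Arendarczyk and D\c{e}bicki \cite{ad11} and used as a black box in the proof of Proposition~\ref{p2.9}. So there is no ``paper's own proof'' to compare against.

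That said, your saddle-point outline is the natural route and is essentially what the original reference does: represent the product tail as an integral, locate the minimiser of $\psi(u)=b_1u^{a_1}+b_2x^{a_2}u^{-a_2}$, and read off $a,b,r,A$ from the Laplace expansion. Your computations of $u^\ast$, $\psi(u^\ast)$, $\psi''(u^\ast)$ and the resulting constants are correct.

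One genuine gap to flag: you invoke a ``monotone density theorem for rapidly varying tails'' to pass from the tail asymptotic of $X_1$ to a density asymptotic, under a tacit eventual-monotonicity hypothesis. No such hypothesis appears in the lemma as stated, and no monotone density theorem of Tauberian type applies directly in the Weibull-tailed (rapidly varying) regime without extra regularity. The proof in \cite{ad11} avoids this by working with the Stieltjes integral $\int\bar F_{X_2}(x/u)\,dF_{X_1}(u)$ and integrating by parts so that only the \emph{tails} $\bar F_{X_i}$ enter, then applying the Laplace method to the resulting integral in $\bar F_{X_1}$ and $\bar F_{X_2}$; this is where your sketch would need to be tightened if you want to match the stated hypotheses exactly. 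The rest of your localisation and tail-truncation plan is sound.
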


\noindent{\emph{Proof of Proposition \ref{p2.9}.}} 
Let $Z_\sigma^+=\prod_{k \in \sigma}\prod_{l \in \sigma^c}X_k^-X_l^+$, $\sigma\in S_N^+$, and $Z_\sigma^-=\prod_{k \in \sigma}\prod_{l \in \sigma^c}X_k^-X_l^+$, $\sigma\in S_N^-$.
We can then derive the following limiting form by induction on $N$: As $z\rightarrow\infty$, 
\begin{align*}
\bar{F}_{Z_\sigma^+}(z)&\sim\frac{(2\pi)^\frac{N-1}{2}}{\sqrt{N}}\bigg\{\prod_{i=1}^{N}\frac{\gamma_i^{2m_i+1}}{(2\alpha_i)^{m_i+1/2}\Gamma(m_i+1/2)}\bigg\}z^{\mu_N-\tfrac{1}{2N}}\nonumber\\
&\quad\times\bigg\{\prod_{k \in \sigma}\prod_{l \in \sigma^c}(\lambda_k^+)^{\mu_N-m_k-\tfrac{N+1}{2N}}(\lambda_l^-)^{\mu_N-m_l-\tfrac{N+1}{2N}}\bigg\}\mathrm{e}^{-N\omega_\sigma^{1/N}z^{1/N}}, \quad\sigma\in S_N^+.
\end{align*}
The base case $N=2$ is given in the proof of Proposition 2.11 of \cite{gl24} and the general case follows by induction on $N$ using Lemma \ref{lem21}. 
Similarly, we have that, as $z\rightarrow-\infty$,
\begin{align}
{F}_{Z_\sigma^-}(z)&\sim\frac{(2\pi)^\frac{N-1}{2}}{\sqrt{N}}\bigg\{\prod_{i=1}^{N}\frac{\gamma_i^{2m_i+1}}{(2\alpha_i)^{m_i+1/2}\Gamma(m_i+1/2)}\bigg\}(-z)^{\mu_N-\tfrac{1}{2N}}\nonumber\\
&\quad\times\bigg\{\prod_{k \in \sigma}\prod_{l \in \sigma^c}(\lambda_k^+)^{\mu_N-m_k-\tfrac{N+1}{2N}}(\lambda_l^-)^{\mu_N-m_l-\tfrac{N+1}{2N}}\bigg\}\mathrm{e}^{-N\omega_\sigma^{1/N}(-z)^{1/N}}, \quad\sigma\in S_N^-.\nonumber
\end{align}
The limiting forms (\ref{lim3}) and (\ref{lim4}) are now obtained from the formulas $\bar{F}_{Z}(z)=\sum_{\sigma \in S_N^+ }\bar{F}_{Z^+_\sigma}(z)$ and ${F}_{Z}(z)=\sum_{\sigma \in S_N^-}F_{Z_\sigma^-}(z)$, respectively. 
\qed 

\vspace{3mm}

\noindent{\emph{Proof of Corollary \ref{cor2.11}.}} We obtain the limiting forms (\ref{lim5}) and (\ref{lim6}) from the limiting forms (\ref{lim3}) and (\ref{lim4}), respectively, by using the relations $f_Z(z)=-\bar{F}_Z'(z)$ and $f_Z(z)=F_Z'(z)$ and retaining the leading order terms. \qed

\subsubsection{Asymptotic behaviour of the quantile function}

%For $0<p<1$, we write $Q(p)=F_Z^{-1}(p)$ for the quantile function of $Z=\prod_{i=1}^{N}X_i$. 
%In the following proposition, we give asymptotic approximations for the quantile function.

\begin{proposition}\label{propq} For $0<p<1$, write $Q(p)=F_Z^{-1}(p)$ for the quantile function of $Z=\prod_{i=1}^{N}X_i$. Let $m_1,\ldots,m_N>-1/2$, $0\leq|\beta_i|<\alpha_i$, $i=1,\ldots,N$. Denote $\omega_+=\min_{\sigma\in S_N^+} \omega_\sigma$ and $\omega_-=\min_{\sigma\in S_N^-} \omega_\sigma$. Then 
	\begin{align}
		Q(p)&\sim\frac{1}{N^N\omega_+}\big(-\ln(1-p)\big)^N,\quad p\rightarrow1, \label{Q1}\\
		Q(p)&\sim-\frac{1}{N^N\omega_-}\big(-\ln(p)\big)^N,\quad p\rightarrow0.\label{Q2}
	\end{align}
\end{proposition}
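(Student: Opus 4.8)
The plan is to invert the tail asymptotics of Proposition \ref{p2.9} (together with the simplification noted in the Remark following Corollary \ref{cor2.11}). First I would recall that, by Proposition \ref{p2.9} and the accompanying remark, as $z\rightarrow\infty$ the complementary CDF behaves like
\[
\bar F_Z(z)\sim C_+ z^{\mu_N-\tfrac{1}{2N}}\mathrm{e}^{-N\omega_+^{1/N}z^{1/N}},
\]
for an explicit constant $C_+>0$ (here one may, if desired, assume the minimiser $\sigma\in S_N^+$ is unique; otherwise $C_+$ is a sum over the minimising $\sigma$, but the exponential rate $N\omega_+^{1/N}z^{1/N}$ is unchanged). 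Write $p\rightarrow1$ and set $u=-\ln(1-p)\rightarrow\infty$, so that $1-p=\bar F_Z(Q(p))$ gives
\[
u=-\ln\bar F_Z(Q(p))=N\omega_+^{1/N}Q(p)^{1/N}-\big(\mu_N-\tfrac{1}{2N}\big)\ln Q(p)-\ln C_+ +o(1).
\]
Since $Q(p)\rightarrow\infty$, the first term dominates, so to leading order $u\sim N\omega_+^{1/N}Q(p)^{1/N}$, i.e.\ $Q(p)^{1/N}\sim u/(N\omega_+^{1/N})$, and raising to the $N$th power yields $Q(p)\sim u^N/(N^N\omega_+)$, which is exactly \eqref{Q1}. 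The case $p\rightarrow0$ is handled identically using $F_Z(z)\sim C_- (-z)^{\mu_N-\tfrac{1}{2N}}\mathrm{e}^{-N\omega_-^{1/N}(-z)^{1/N}}$ as $z\rightarrow-\infty$ from \eqref{lim4}: with $v=-\ln p\rightarrow\infty$ one gets $(-Q(p))^{1/N}\sim v/(N\omega_-^{1/N})$, hence $Q(p)\sim -v^N/(N^N\omega_-)$, giving \eqref{Q2}.

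To make this rigorous rather than heuristic, I would argue in two directions. For the upper bound on the growth of $Q$, note that for any $\varepsilon>0$ the asymptotic for $\bar F_Z$ implies $\bar F_Z(z)\geq \exp(-N(1+\varepsilon)\omega_+^{1/N}z^{1/N})$ for all large $z$; applying this at $z=Q(p)$ and using $\bar F_Z(Q(p))=1-p$ forces $Q(p)\leq \big((1+\varepsilon)^{-N}+o(1)\big)u^N/(N^N\omega_+)$. For the matching lower bound, similarly $\bar F_Z(z)\leq \exp(-N(1-\varepsilon)\omega_+^{1/N}z^{1/N})$ for large $z$ gives $Q(p)\geq \big((1-\varepsilon)^{-N}+o(1)\big)u^N/(N^N\omega_+)$. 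Letting $\varepsilon\downarrow0$ pins down the leading-order constant and proves \eqref{Q1}. One should also note that $Q(p)\rightarrow\infty$ as $p\rightarrow1$ (and $Q(p)\rightarrow-\infty$ as $p\rightarrow0$), which is immediate since $Z$ has unbounded support in both directions — e.g.\ from the tail asymptotics themselves, $\bar F_Z(z)>0$ for every $z$.

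The main obstacle, such as it is, is purely bookkeeping: one must take logarithms of the asymptotic relation for $\bar F_Z$ and verify that the polynomial correction $z^{\mu_N-1/2N}$ and the constant $C_+$ contribute only lower-order terms after inversion, i.e.\ that $\ln Q(p)=O(\ln u)=o(u)$, so they do not affect the leading $u^N$ behaviour. This is a standard ``invert a Weibull-type tail'' computation, and the $\varepsilon$-sandwiching above makes it airtight without needing a refined expansion of $Q$. No genuinely hard step is involved; the substantive work was already done in establishing Proposition \ref{p2.9}.
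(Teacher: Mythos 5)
Your argument is correct and follows essentially the same route as the paper: both invert the Weibull-type tail asymptotic of Proposition \ref{p2.9}, the paper packaging the inversion as a small standalone lemma on solving $Ax^r\exp(-ax^{1/N})(1+g(x))=z$ while you use an $\varepsilon$-sandwich. One small slip to fix: the lower bound $\bar{F}_Z(z)\geq\exp(-N(1+\varepsilon)\omega_+^{1/N}z^{1/N})$ applied at $z=Q(p)$ yields the \emph{lower} bound $Q(p)\geq(1+\varepsilon)^{-N}u^N/(N^N\omega_+)$, and the upper bound on $\bar{F}_Z$ yields the upper bound on $Q$ with constant $(1-\varepsilon)^{-N}$ --- as written your two inequalities point the wrong way (and would be mutually inconsistent since $(1+\varepsilon)^{-N}<(1-\varepsilon)^{-N}$), but the intended sandwich closes once the labels are swapped.
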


We begin by proving the following lemma.
\begin{lemma} Let $a,A,N,z>0$ and $r\in\mathbb{R}$. Let $g:(0,\infty)\rightarrow\mathbb{R}$ be a function such that $g(x)\rightarrow0$ as $x\rightarrow\infty$. Consider the equation 
	\begin{align}
		Ax^r\exp(-ax^{1/N})(1+g(x))=z,\label{eq242}
	\end{align}
	and notice that there is a unique solution $x$ provided $z$ is sufficiently small. Then, as $z\rightarrow0$,
	\begin{align}
		x\sim\frac{1}{a^N}\big(-\ln(z)\big)^N.\label{eq243}
	\end{align}
\end{lemma}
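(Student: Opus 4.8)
The plan is to solve equation \eqref{eq242} asymptotically by taking logarithms and then bootstrapping. First I would take the natural logarithm of both sides of \eqref{eq242}, obtaining
\begin{align}
\ln A + r\ln x - a x^{1/N} + \ln(1+g(x)) = \ln z.\nonumber
\end{align}
Since $z\to0$, the left-hand side tends to $-\infty$, and because $x\to\infty$ in this regime the dominant term on the left is $-ax^{1/N}$ (the term $r\ln x$ is of logarithmic order in $x^{1/N}$, and $\ln(1+g(x))\to0$). Hence $ax^{1/N}\sim-\ln z$, which already gives the crude estimate $x^{1/N}\sim a^{-1}(-\ln z)$, i.e.\ $\ln x\sim N\ln(-\ln z)$.

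Next I would feed this crude estimate back into the logarithmic equation to sharpen it. Rearranging gives
\begin{align}
a x^{1/N} = -\ln z + \ln A + r\ln x + \ln(1+g(x)) = -\ln z + O(\ln(-\ln z)),\nonumber
\end{align}
using $\ln x = O(\ln(-\ln z))$ and $g(x)=o(1)$. Therefore $x^{1/N} = a^{-1}(-\ln z)\big(1 + O(\ln(-\ln z)/(-\ln z))\big)$, and raising to the $N$-th power yields
\begin{align}
x = \frac{1}{a^N}\big(-\ln z\big)^N\Big(1 + O\big(\tfrac{\ln(-\ln z)}{-\ln z}\big)\Big)^N \sim \frac{1}{a^N}\big(-\ln z\big)^N, \quad z\to0,\nonumber
\end{align}
which is \eqref{eq243}.

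For the existence and uniqueness claim, I would note that the function $x\mapsto Ax^r\exp(-ax^{1/N})(1+g(x))$ is continuous and, since $\exp(-ax^{1/N})$ decays faster than any power, tends to $0$ as $x\to\infty$; for $z$ small enough one can restrict to a range of $x$ on which $1+g(x)>0$ and the function is eventually strictly decreasing (the leading factor $Ax^r\exp(-ax^{1/N})$ is strictly decreasing for large $x$, and $g(x)\to0$ controls the correction), giving a unique solution. I expect the main obstacle to be purely bookkeeping: making the error terms rigorous requires being careful that $g(x)\to0$ only as $x\to\infty$, so one must first establish that the solution $x=x(z)$ does indeed satisfy $x(z)\to\infty$ as $z\to0$ before invoking $g(x)=o(1)$; this is handled by the crude logarithmic estimate, which shows $x(z)\ge c(-\ln z)$ for small $z$. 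Everything else is a routine two-step bootstrap.
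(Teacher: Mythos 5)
Your proposal is correct and follows essentially the same route as the paper: both take logarithms of \eqref{eq242}, observe that $ax^{1/N}$ is the dominant term so that $x=O((-\ln z)^N)$, and then bootstrap this crude bound back in to show the corrections $r\ln x$ and $\ln(1+g(x))$ are of lower order, yielding \eqref{eq243}. The only difference is presentational (the paper solves for $x$ as $(\ln(A/z)+r\ln x+\ln(1+g(x)))^N/a^N$ before iterating, while you iterate on the logarithmic equation directly), and your remark that one must first establish $x(z)\to\infty$ before using $g(x)=o(1)$ is exactly the point the paper's proof also relies on.
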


\begin{proof} From equation ($\ref{eq242}$) we get that $x=
		(\ln(w)+r\ln(x)+\ln(h(x)))^N/a^N$, where $w=A/z$ and $h(x)=1+g(x)$.
	From this equation it is clear that $x=O((\ln(w))^N)$ as $w\rightarrow\infty$. From this, we deduce that $\ln(x)=O(\ln(\ln(w))$ as $w\rightarrow\infty$, and that $\ln(h(x))\rightarrow0$ as $w\rightarrow\infty$ (because $\ln(1+u)\rightarrow0$ as $u\rightarrow0$). 
    Thus, $x\sim(\ln(w))^N/{a^N}\sim(-\ln(z))^N/{a^N}$ as $z\rightarrow0$.
\end{proof}   

\noindent{\emph{Proof of Proposition \ref{propq}.}} We derive the limiting form (\ref{Q1}); the derivation of (\ref{Q2}) is similar and thus omitted. Since $Q(p)$ satisfies $\bar{F}_Z(Q(p))=1-p$, it can be seen upon applying the limiting form (\ref{lim3}) that $Q(p)$ solves an equation of the form (\ref{eq242}) with $z=1-p$ and $a=N\omega_+^{1/N}$. The limiting form (\ref{Q1}) thus follows upon applying (\ref{eq243}) with these values of $z$ and $a$.
%The derivation of the limiting form (\ref{Q2}) is similar, .
\qed

\section{Special cases}\label{sec3}

\subsection{Product of $N$ independent asymmetric Laplace random variables}\label{sec3.1}

In section, we apply the general formulas of Section \ref{sec2} to obtain closed-form formulas for the PDF,  CDF and characteristic function of the product of $N$ independent asymmetric Laplace random variables. We recall that the $\mathrm{VG}(1/2,\alpha,\beta,0)$ distribution corresponds to the asymmetric Laplace distribution (with zero location parameter), denoted by $\mathrm{AL}(\alpha,\beta)$, with density
%$f(x)=(2\alpha)^{-1}(\alpha^2-\beta^2)\mathrm{e}^{\beta (x-\mu)-\alpha|x-\mu|}$, $x\in\mathbb{R}$.
%As noted by \cite{kkp01}, the VG$(1/2, \alpha, \beta, 0)$ distribution corresponds to the asymmetric Laplace distribution (with zero location parameter) with PDF
$f(x)=(2\alpha)^{-1}(\alpha^2-\beta^2)\mathrm{e}^{\beta x-\alpha|x|}$, $x\in\mathbb{R}$ (see \cite{kkp01}).
%If a random variable $X$ has PDF (\ref{lap}), then we write $X\sim \mathrm{AL}(\alpha,\beta)$. 
Further setting $\beta=0$ yields the classical Laplace distribution, denoted by $\mathrm{Laplace}(\alpha)$, with density $f(x)=(\alpha/2)\mathrm{e}^{-\alpha |x|}$, $x\in\mathbb{R}$.
%\begin{equation}
	%f(x)=\frac{\alpha}{2}\mathrm{e}^{-\alpha|x|},\quad x\in\mathbb{R}.\label{lap2}
%\end{equation}
%If a random variable $X$ has PDF (\ref{lap2}), then we write $X\sim\mathrm{Laplace}(\alpha)$.

\begin{corollary}\label{cor3.1} Let $X_i\sim\mathrm{AL}(\alpha_i,\beta_i)$, $i=1,\ldots,N$ be independent asymmetric Laplace random variables, with $0\leq|\beta_i|<\alpha_i$, $i=1,\ldots,N$. Let $Z=\prod_{i=1}^{N}X_i$. Then

\vspace{2mm}

\noindent	(i) For $z\in\mathbb{R}$,
	\begin{align*}
		f_Z(z)= & \frac{1}{2^N\xi}  \bigg\{\prod_{i=1}^{N}\gamma_i^2\bigg\}
		\sum_{\sigma\in A(z)}G^{N,0}_{0,N} \bigg(\omega_\sigma|z| \,\bigg|\, \begin{aligned} -\atop  0,\ldots,0  \end{aligned}   \bigg).
	\end{align*}
	%where $ A = \left\{ \begin{aligned}
	%S_N^+& & \text{if $z > 0 $ }
	%\\ S_N^- & & \text{if $z < 0$}
%\end{aligned} \right.$.
\noindent (ii) We have
\begin{align*}
F_Z(z)&=1-\frac{1}{2^N\xi}  \bigg\{\prod_{i=1}^{N}\gamma_i^2\bigg\} \sum_{\sigma\in S_N^+}\frac{1}{\omega_\sigma}\bigg\{1-G^{N,1}_{1,N+1} \bigg( \omega_\sigma z\,\bigg|\, \begin{aligned} &1 \\ 1, \ldots&, 1,0 \end{aligned}   \bigg)\bigg\}, \quad z>0, \\
F_Z(z)&=\frac{1}{2^N\xi}  \bigg\{\prod_{i=1}^{N}\gamma_i^2\bigg\}\sum_{\sigma\in S_N^-}\frac{1}{\omega_\sigma}\bigg\{1-G^{N,1}_{1,N+1} \bigg( -\omega_\sigma z\,\bigg|\, \begin{aligned} &1 \\ 1, \ldots&, 1,0 \end{aligned}   \bigg)\bigg\}, \quad z<0.
\end{align*}
\noindent	(iii) For $t\in\mathbb{R}$,
	\begin{align}
		\varphi_Z(t)&=\frac{\mathrm{i}}{2^N\xi t}\bigg\{\prod_{j=1}^{N}\gamma_j^2\bigg\}\bigg\{\sum_{\sigma \in S_N^+}G^{N,1}_{1,N} \bigg(\frac{\mathrm{i}\omega_\sigma}{t}\, \bigg|\, \begin{aligned} 0 \atop 0, \ldots,0  \end{aligned}   \bigg)-\sum_{\sigma \in S_N^-}G^{N,1}_{1,N} \bigg(-\frac{\mathrm{i}\omega_\sigma}{t}\, \bigg|\, \begin{aligned} 0 \atop 0, \ldots,0  \end{aligned}   \bigg)\bigg\}. \label{charlap}
	\end{align}
\begin{comment}
	\noindent (iii) Suppose now that $N=3$, then we have that 
	\begin{align*}
		f_Z(z)&=\frac{1}{4}\bigg\{\prod_{i=1}^{3}\frac{\gamma_i^2}{\alpha_i}\bigg\}\bigg\{h\big(z,0,\alpha_3|z|-\beta_3z,0,2\sqrt{(\alpha_1-\beta_1)(\alpha_2-\beta_2)}\big)\\
		&\quad+h\big(z,0,\alpha_3|z|+\beta_3z,0,2\sqrt{(\alpha_1-\beta_1)(\alpha_2+\beta_2)}\big)\\
		&\quad+h\big(z,0,\alpha_3|z|-\beta_3z,0,2\sqrt{(\alpha_1+\beta_1)(\alpha_2+\beta_2)}\big)\\
		&\quad+h\big(z,0,\alpha_3|z|+\beta_3z,0,2\sqrt{(\alpha_1+\beta_1)(\alpha_2-\beta_2)}\big)\bigg\}.
	\end{align*}
\end{comment}
\end{corollary}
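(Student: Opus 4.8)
The plan is to specialise the half-integer results of Section~\ref{sec2} to $m_1=\cdots=m_N=1/2$, which is legitimate because $\mathrm{AL}(\alpha_i,\beta_i)=\mathrm{VG}(1/2,\alpha_i,\beta_i,0)$. With every $m_i=1/2$ we have $m_i-1/2=0$, so each inner summation $\sum_{j_i=0}^{m_i-1/2}$ collapses to the single term $j_i=0$; moreover $(m_i-1/2+j_i)!/((m_i-1/2-j_i)!\,j_i!)=1$ and $(\alpha_k+\beta_k)^{j_k+1/2-m_k}/(2\alpha_k)^{j_k}=1$ once $j_k=0$. The whole corollary then follows by plugging these values into formulas already established.

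\emph{Part (i).} I would substitute $m_i=1/2$, $j_i=0$ into the PDF formula~(\ref{redf}) of Proposition~\ref{prop2.3}. The leading constant reduces to $\prod_{i=1}^{N}\gamma_i^2/((2\alpha_i)\,0!)=\tfrac{1}{2^N\xi}\prod_{i=1}^{N}\gamma_i^2$, the Meijer $G$-function lower parameters $m_i-\tfrac12-j_i$ all become $0$, and~(\ref{redf}) directly gives the stated formula with $A(z)=S_N^+$ for $z>0$ and $A(z)=S_N^-$ for $z<0$.

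\emph{Part (ii).} Proposition~\ref{p2.7} cannot be invoked here because it assumes $\beta_1=\cdots=\beta_N=0$, so instead I would integrate the density of part~(i). For $z>0$, write $\bar F_Z(z)=\int_z^\infty f_Z(y)\,\mathrm{d}y$; the substitution $u=\omega_\sigma y$ reduces each summand to $\tfrac{1}{\omega_\sigma}\int_{\omega_\sigma z}^\infty G^{N,0}_{0,N}(u\mid 0,\ldots,0)\,\mathrm{d}u$. Exactly as in the proof of Proposition~\ref{p2.7} (taking $m_i=1/2$, $j_i=0$), the indefinite integral formula~(\ref{mint1}) supplies the antiderivative $G^{N,1}_{1,N+1}(u\mid 1;1,\ldots,1,0)$, which vanishes at $u=0$ by the contour representation~(\ref{mdef}) and tends to $1$ as $u\to\infty$, since $\int_0^\infty G^{N,0}_{0,N}(u\mid 0,\ldots,0)\,\mathrm{d}u=\Gamma(1)^N=1$ by the Mellin transform of the $G$-function. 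Hence $\int_{\omega_\sigma z}^\infty G^{N,0}_{0,N}(u\mid 0,\ldots,0)\,\mathrm{d}u=1-G^{N,1}_{1,N+1}(\omega_\sigma z\mid 1;1,\ldots,1,0)$, and $F_Z(z)=1-\bar F_Z(z)$ yields the $z>0$ formula. For $z<0$ I would compute $F_Z(z)=\int_{-\infty}^z f_Z(y)\,\mathrm{d}y$, apply the change of variable $y=-v$ to rewrite this as $\tfrac{1}{2^N\xi}\{\prod_i\gamma_i^2\}\sum_{\sigma\in S_N^-}\int_{-z}^\infty G^{N,0}_{0,N}(\omega_\sigma v\mid 0,\ldots,0)\,\mathrm{d}v$, and use the same $G$-function identity with argument $-\omega_\sigma z>0$.

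\emph{Part (iii).} I would specialise the half-integer characteristic function formula~(\ref{Cfhalf}) to $m_i=1/2$, $j_i=0$: the prefactor becomes $\tfrac{\mathrm{i}}{2^N\xi t}\prod_{j=1}^N\gamma_j^2$, the weights attached to each $\sigma$ collapse to $1$, and each $G^{N,1}_{1,N}(\pm\mathrm{i}\omega_\sigma/t\mid 0;m_1-\tfrac12-j_1,\ldots)$ reduces to $G^{N,1}_{1,N}(\pm\mathrm{i}\omega_\sigma/t\mid 0;0,\ldots,0)$, which is precisely~(\ref{charlap}). The only step that is more than bookkeeping is the CDF computation in part~(ii): one must evaluate the improper integral of the Meijer $G$-function carefully — in particular justify $\lim_{u\to\infty}G^{N,1}_{1,N+1}(u\mid 1;1,\ldots,1,0)=1$ — and keep track of the sign/substitution conventions for $z<0$.
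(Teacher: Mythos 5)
Your proposal is correct and follows essentially the same route as the paper: parts (i) and (iii) by specialising (\ref{redf}) and (\ref{Cfhalf}) to $m_i=1/2$, and part (ii) by integrating the part (i) density, substituting $u=\omega_\sigma y$, and applying (\ref{mint1}). The only (harmless) deviation is your justification of $\lim_{u\to\infty}G^{N,1}_{1,N+1}(u\mid 1;1,\ldots,1,0)=1$ via the total Mellin mass $\int_0^\infty G^{N,0}_{0,N}(u\mid 0,\ldots,0)\,\mathrm{d}u=\Gamma(1)^N=1$, where the paper instead uses the inversion identity (\ref{ginv}) and a residue computation; both are valid.
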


\begin{corollary}\label{cor3.2}
	Let	$X_i\sim\mathrm{Laplace}(\alpha_i)$, $i=1,\ldots,N$, be independent Laplace random variables. Let $Z=\prod_{i=1}^{N}X_i$. Then the PDF, CDF and characteristic function of $Z$ are given by
	\begin{align*}
			f_Z(z)&= \frac{\xi}{2}G^{N,0}_{0,N} \bigg(\xi|z|\, \bigg|\, \begin{aligned} -\atop  0,\ldots,0  \end{aligned}   \bigg),\quad z\in\mathbb{R}, \\
		F_Z(z)&=\frac{1}{2}+\frac{\mathrm{sgn}(z)}{2}G^{N,1}_{1,N+1}\bigg( \xi|z|\, \bigg|\,{ 1\atop 1,\ldots,1,0}\bigg),\quad z\in\mathbb{R},\\
		\varphi_Z(t)&=\frac{\xi}{2^{N-1}\pi^{(N-1)/2}}|t|^{-1}G^{2N-1,1}_{1,2N-1} \bigg( \frac{\xi^2}{4^{N-1}t^2} \,\bigg|\, \begin{aligned} \frac{1}{2}\atop  0, \ldots,0, \,\frac{1}{2},\ldots, \frac{1}{2}  \end{aligned}   \bigg),\quad t\in\mathbb{R}.
	\end{align*}
\begin{comment}	(iv) Suppose now that $N=3$, then we have that 
	\begin{align*}
		f_Z(z)=\alpha h\big(z,0,\alpha_3|z|,0,2\sqrt{\alpha_1\alpha_2}\big).
	\end{align*}
	\end{comment}
\end{corollary}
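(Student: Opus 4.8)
The plan is to obtain Corollary \ref{cor3.2} as the special case of Corollary \ref{cor3.1} in which all skewness parameters vanish, $\beta_1=\cdots=\beta_N=0$, since $\mathrm{Laplace}(\alpha)=\mathrm{AL}(\alpha,0)=\mathrm{VG}(1/2,\alpha,0,0)$. In this case each $\gamma_i^2=\alpha_i^2$, each $\lambda_i^\pm=\alpha_i$, and hence $\omega_\sigma=\alpha_1\cdots\alpha_N=\xi$ for \emph{every} $\sigma\in S_N$; the dependence on $\sigma$ disappears entirely. Thus in part (i) of Corollary \ref{cor3.1} the inner $G$-function no longer depends on $\sigma$, and the sum over $\sigma\in A(z)$ just contributes a factor $|A(z)|=|S_N^+|=|S_N^-|=2^{N-1}$. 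Combining $\{\prod_i\gamma_i^2\}=\xi^2$, the prefactor $1/(2^N\xi)$, and the count $2^{N-1}$ gives $2^{N-1}\xi^2/(2^N\xi)=\xi/2$, which is exactly the claimed PDF $f_Z(z)=(\xi/2)G^{N,0}_{0,N}(\xi|z|\,|\,{-\atop 0,\ldots,0})$.

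For the CDF, I would rather not push the $\beta\to0$ limit through part (ii) of Corollary \ref{cor3.1} (which is written only for $z>0$, $z<0$ separately and involves $1/\omega_\sigma$ terms); instead I would integrate the PDF directly, mimicking the proof of Proposition \ref{p2.7}. Since the PDF is symmetric about the origin, $F_Z(z)=\tfrac12+\mathrm{sgn}(z)\int_0^{|z|}f_Z(y)\,\mathrm{d}y=\tfrac12+\tfrac{\mathrm{sgn}(z)}{2}\int_0^{|z|}\xi G^{N,0}_{0,N}(\xi y\,|\,{-\atop 0,\ldots,0})\,\mathrm{d}y$. A change of variable $u=\xi y$ turns this into $\tfrac12+\tfrac{\mathrm{sgn}(z)}{2}\int_0^{\xi|z|}G^{N,0}_{0,N}(u\,|\,{-\atop 0,\ldots,0})\,\mathrm{d}u$, and the indefinite-integral formula (\ref{mint1}) for the Meijer $G$-function (the same one used in Proposition \ref{p2.7}) evaluates this as $\big[G^{N,1}_{1,N+1}(u\,|\,{1\atop 1,\ldots,1,0})\big]_0^{\xi|z|}$; the lower endpoint vanishes by the contour-integral representation (\ref{mdef}), giving the stated CDF. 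Alternatively, one simply specialises Proposition \ref{p2.7} with all $m_i=1/2$: then each inner sum is a single term $j_i=0$, each combinatorial factor equals $1$, the product $\prod_i 2^{1/2-m_i}=1$, and the bottom parameters $m_i+\tfrac12-j_i$ all equal $1$, reproducing the formula immediately; I would present whichever is shorter.

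For the characteristic function, I would specialise Theorem 2.8 (the $G^{2N-1,1}_{1,2N-1}$ formula (\ref{char1}) for symmetric VG products) with $m_1=\cdots=m_N=1/2$ and $\alpha_i$ arbitrary. Then $\xi=\prod_i\alpha_i$ as defined, $\eta=1/\prod_i\Gamma(1)=1$, and the bottom parameter list $0,\ldots,0,m_1,\ldots,m_N$ becomes $0,\ldots,0,\tfrac12,\ldots,\tfrac12$, so (\ref{char1}) reads $\varphi_Z(t)=\dfrac{\xi}{2^{N-1}\pi^{(N-1)/2}}|t|^{-1}G^{2N-1,1}_{1,2N-1}\!\big(\tfrac{\xi^2}{4^{N-1}t^2}\,\big|\,{\tfrac12\atop 0,\ldots,0,\tfrac12,\ldots,\tfrac12}\big)$, which is precisely the claimed expression. (As a consistency check one could instead specialise the half-integer formula (\ref{Cfhalf}) with $m_i=1/2$, where $\omega_\sigma=\xi$ for all $\sigma$ and $|S_N^+|=|S_N^-|=2^{N-1}$, and verify it agrees, but this is not needed.) None of these steps presents a genuine obstacle: the only point requiring a little care is bookkeeping of the constants in part (i) — tracking the factor $2^{N-1}$ from $|S_N^\pm|$ against the $2^N$ in the denominator and confirming $\{\prod_i\gamma_i^2\}=\xi^2$ collapses to the single $\xi/2$ — and, in the CDF derivation, noting (as in Proposition \ref{p2.7}) that the antiderivative $G$-function vanishes at $u=0$.
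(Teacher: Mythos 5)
Your proposal is correct and follows essentially the same route as the paper: the PDF by setting $\beta_1=\cdots=\beta_N=0$ in part (i) of Corollary \ref{cor3.1}, and the CDF and characteristic function by specialising (\ref{p2.73}) and (\ref{char1}) with $m_1=\cdots=m_N=1/2$. Your explicit bookkeeping (that $\omega_\sigma=\xi$ for every $\sigma$, that $|S_N^{\pm}|=2^{N-1}$, and that the prefactor collapses to $\xi/2$) correctly fills in the details the paper leaves implicit, and the alternative direct-integration derivation of the CDF you sketch is also valid but unnecessary.
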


\begin{remark} In the case $N=2$, we can apply the reduction formula (\ref{wmfor}) to formula (\ref{charlap}) to obtain the following expression for the characteristic function of $Z=X_1X_2$, where $X_1\sim\mathrm{AL}(\alpha_1,\beta_1)$ and $X_2\sim\mathrm{AL}(\alpha_2,\beta_2)$ are independent.
For $t\in\mathbb{R}$,
\begin{align}\varphi_Z(t)&=\frac{\gamma_1^2\gamma_2^2}{4\alpha_1\alpha_2}\bigg(\frac{\mathrm{i}}{t}\bigg)^{1/2}\bigg\{\frac{\mathrm{e}^{\mathrm{i}\lambda_1^-\lambda_2^-/(2t)}}{(\lambda_1^-\lambda_2^-)^{1/2}}W_{-\frac{1}{2},0}\bigg(\frac{\mathrm{i}\lambda_1^-\lambda_2^-}{t}\bigg)+\frac{\mathrm{e}^{\mathrm{i}\lambda_1^+\lambda_2^+/(2t)}}{(\lambda_1^+\lambda_2^+)^{1/2}}W_{-\frac{1}{2},0}\bigg(\frac{\mathrm{i}\lambda_1^+\lambda_2^+}{t}\bigg)\nonumber\\
&\quad+\frac{\mathrm{e}^{-\mathrm{i}\lambda_1^-\lambda_2^+/(2t)}}{(\lambda_1^-\lambda_2^+)^{1/2}}W_{-\frac{1}{2},0}\bigg(-\frac{\mathrm{i}\lambda_1^-\lambda_2^+}{t}\bigg)+\frac{\mathrm{e}^{-\mathrm{i}\lambda_1^+\lambda_2^-/(2t)}}{(\lambda_1^+\lambda_2^-)^{1/2}}W_{-\frac{1}{2},0}\bigg(-\frac{\mathrm{i}\lambda_1^+\lambda_2^-}{t}\bigg)\bigg\}, \nonumber
\end{align} 
where $W_{\kappa,\mu}(x)$ is a Whittaker function (see \cite[equation 13.14.3]{olver} for a definition) This formula corrects the formula of part (iii) of Corollary 3.1 of \cite{gl24} in which the denominator in the  argument of the Whittaker functions was erroneously given by $2t$ rather than by $t$.
%as stated in our  formula.
\end{remark}

%\vspace{3mm}
\noindent{\emph{Proof of Corollary \ref{cor3.1}.}} For parts (i) and (iii), we simply set $m_1=\cdots=m_N=1/2$ in (\ref{redf}) and (\ref{Cfhalf}), respectively. The proof of part (ii) is more involved. We prove this part for $z>0$; the case $z<0$ is similar and thus omitted. For $z>0$, we integrate the PDF from part (i) to get
\begin{align}
F_Z(z)&=1-\frac{1}{2^N\xi}  \bigg\{\prod_{i=1}^{N}\gamma_i^2\bigg\}
		\sum_{\sigma\in A(z)}\int_z^\infty G^{N,0}_{0,N} \bigg(\omega_\sigma y \,\bigg|\, \begin{aligned} -\atop  0,\ldots,0  \end{aligned}   \bigg)  \,\mathrm{d}y \nonumber\\
&=1-\frac{1}{2^N\xi}  \bigg\{\prod_{i=1}^{N}\gamma_i^2\bigg\}
		\sum_{\sigma\in A(z)}\frac{1}{\omega_\sigma}\int_{\omega_\sigma z}^\infty G^{N,0}_{0,N} \bigg(u \,\bigg|\, \begin{aligned} -\atop  0,\ldots,0  \end{aligned}   \bigg)  \,\mathrm{d}u \nonumber \\
&=1-\frac{1}{2^N\xi}  \bigg\{\prod_{i=1}^{N}\gamma_i^2\bigg\}
		\sum_{\sigma\in A(z)}\frac{1}{\omega_\sigma} \bigg[G^{N,1}_{1,N+1} \bigg(u \,\bigg|\, \begin{aligned} 1\atop  1,\ldots,1,0  \end{aligned}   \bigg) \bigg]_{\omega_\sigma z}^\infty,     \label{int789} 
\end{align}
where we used the  integral formula (\ref{mint1}) in the last step. Finally, we calculate the limit
\begin{align}
\lim_{u\rightarrow \infty}G^{N,1}_{1,N+1} \bigg(u \,\bigg|\, \begin{aligned} 1\atop  1,\ldots,1,0  \end{aligned}   \bigg)=\lim_{v\rightarrow 0}G^{1,N}_{N+1,1} \bigg(v \,\bigg|\, \begin{aligned} 0,\ldots,0,1\atop  0  \end{aligned}   \bigg)=1, \label{lim789}    
\end{align}
where we used the identity (\ref{ginv}) (and set $v=1/u$) to obtain the equality, and the final limit is easily computed by writing the Meijer $G$-function as a contour integral using (\ref{mdef}) and then computing the residues as we did in the proof of Proposition \ref{vg0}; we omit the details. Applying the limit (\ref{lim789}) to equation (\ref{int789}) now yields the desired formula for $F_{Z}(z)$ for $z>0$.
\qed

\vspace{3mm}
\noindent{\emph{Proof of Corollary \ref{cor3.2}.}} For the PDF, set $\beta_1=\cdots=\beta_N=0$ in part (i) of Corollary \ref{cor3.1}. For the CDF and characteristic function, set $m_1=\cdots=m_N=1/2$ in formulas (\ref{p2.73}) and (\ref{char1}), respectively.
%\vspace{2mm}
%\noindent(iii): Set $m_1=\cdots=m_N=1/2$ in formula (\ref{char1}). 
\qed

\subsection{Product of independent zero mean normal and Laplace random variables}\label{sec3.2}

Recall 
%from Section \ref{sec3.1} 
that the $\mathrm{VG}(1/2,\alpha,0,0)$ distribution corresponds to a Laplace distribution with density $f(x)=(\alpha/2)\mathrm{e}^{-\alpha|x|}$, $x\in\mathbb{R}$. The product of independent zero mean normal random variables is also VG distributed:
Let $X_1\sim N(0,\sigma_1^2)$ and $X_2\sim N(0,\sigma_2^2)$ be independent. Then, by \cite[Proposition 1.2]{gaunt vg}, $X_1X_2\sim \mathrm{VG}(0,1/(\sigma_1\sigma_2),0,0)$.
%\cite{gaunt prod}). 
% In the following corollary, we provide formulas for the PDF, CDF and characteristic function of the product $Z=N_1N_2Y$, where $N_1\sim N(0,\sigma_1^2)$, $N_2\sim N(0,\sigma_2^2)$ and $Y\sim\mathrm{Laplace}(\alpha_2)$ are mutually independent. 

\begin{corollary} Let $X_1,\ldots,X_{2M}$ and $Y_1,\ldots,Y_N$ be mutually independent random variables with $X_i\sim N(0,\sigma_i^2)$, $i=1,\ldots,2M$, and $Y_j\sim\mathrm{Laplace}(\alpha_j)$, $j=1,\ldots,N$. Let $Z=(\prod_{i=1}^{2M}X_i)(\prod_{j=1}^N Y_j)$. Denote $\nu=(\prod_{i=1}^{2M}\sigma_i^{-1})(\prod_{j=1}^N\alpha_j)$.  Also, let $\mathbf{0}$ and $\mathbf{0}'$ be the zero vectors of dimensions $2M+N$ and $2M+N-1$, respectively, and let $\boldsymbol{\frac{1}{2}}$ be a vector of dimension $N$ with all entries set to $1/2$. 
Then the PDF, CDF and characteristic function of $Z$ are given by
\begin{align*}
f_Z(z)&=\frac{\nu}{2^{M+N} \pi^{M+N/2}} G^{2(M+N),0}_{0,2(M+N)} \bigg( \frac{\nu^{2}z^2}{4^{M+N}}\, \bigg|\, \begin{aligned} - \atop  \mathbf{0}, \boldsymbol{\frac{1}{2}}  \end{aligned}   \bigg),\quad z\in\mathbb{R},   \\
F_Z(z)&=\frac{1}{2}+ \frac{\nu z }{  2^{M+N+1} \pi^{M+N/2}  } G^{2(M+N),1}_{1,2(M+N)+1}\bigg( \frac{\nu^2z^2}{4^{M+N}} \,\bigg|\,{ \frac{1}{2} \atop \mathbf{0}, \boldsymbol{\frac{1}{2}},-\frac{1}{2}}\bigg),\quad z\in\mathbb{R},\\
\varphi_Z(t)&=\frac{\nu |t|^{-1}}{2^{M+N-1}\pi^{M+(N-1)/2}}G^{2(M+N)-1,1}_{1,2(M+N)-1} \bigg( \frac{\nu^2}{4^{M+N-1}t^2} \,\bigg|\, \begin{aligned} \tfrac{1}{2}\atop  \mathbf{0}', \boldsymbol{\frac{1}{2}}  \end{aligned}   \bigg),\quad t\in\mathbb{R}.    
\end{align*}
\end{corollary}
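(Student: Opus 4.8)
The plan is to recognize that $Z=\big(\prod_{i=1}^{2M}X_i\big)\big(\prod_{j=1}^N Y_j\big)$ is itself a product of independent \emph{symmetric} VG random variables, so that all three formulas follow immediately by specializing the results of Section \ref{sec2.1}--\ref{sec2.3}. First I would pair up the normal factors: by \cite[Proposition 1.2]{gaunt vg}, $X_{2k-1}X_{2k}\sim\mathrm{VG}(0,1/(\sigma_{2k-1}\sigma_{2k}),0,0)$ for $k=1,\ldots,M$, and these $M$ products are mutually independent (they involve disjoint sets of the independent $X_i$). Likewise each $Y_j\sim\mathrm{Laplace}(\alpha_j)=\mathrm{VG}(1/2,\alpha_j,0,0)$. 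Hence $Z$ is a product of $M+N$ independent symmetric VG random variables: $M$ of them with shape parameter $m=0$ and scale parameters $1/(\sigma_{2k-1}\sigma_{2k})$, and $N$ of them with $m=1/2$ and scale parameters $\alpha_j$.

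Next I would apply the symmetric-case formulas directly with $N$ there replaced by $M+N$, and with the list $(m_1,\ldots,m_{M+N})=(0,\ldots,0,\tfrac12,\ldots,\tfrac12)$ ($M$ zeros followed by $N$ halves), so that the bottom parameter list of the Meijer $G$-function in \eqref{simp} becomes $\underbrace{0,\ldots,0}_{M},\tfrac12,\ldots,\tfrac12,\underbrace{0,\ldots,0}_{M+N}$, which after merging the zeros is exactly $(\mathbf{0},\boldsymbol{\tfrac12})$ with $\mathbf{0}$ of dimension $2M+N$ and $\boldsymbol{\tfrac12}$ of dimension $N$. The product of scale parameters is $\xi=\big(\prod_{k=1}^M 1/(\sigma_{2k-1}\sigma_{2k})\big)\big(\prod_{j=1}^N\alpha_j\big)=\big(\prod_{i=1}^{2M}\sigma_i^{-1}\big)\big(\prod_{j=1}^N\alpha_j\big)=\nu$, and $\eta=1/\prod\Gamma(m_i+1/2)=1/\big((\Gamma(1/2))^M(\Gamma(1))^N\big)=\pi^{-M/2}$. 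Substituting $\xi=\nu$, $\eta=\pi^{-M/2}$ and the replacement $N\mapsto M+N$ into \eqref{simp}, \eqref{cdf1} and \eqref{char1} then gives the three stated formulas after simplifying the powers of $2$ and $\pi$ (for instance $\xi\eta/(2^{M+N}\pi^{(M+N)/2})=\nu/(2^{M+N}\pi^{M+N/2})$ for the PDF).

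I expect there is essentially no obstacle here: the only points requiring a line of justification are (a) the independence of the $M$ normal pair-products and the $N$ Laplace variables, which is immediate since they are built from disjoint subsets of a mutually independent family, and (b) the bookkeeping that merges the ``$0$'' entries coming from the $m=0$ VG factors with the ``$0$'' entries that \eqref{simp}, \eqref{cdf1}, \eqref{char1} already contribute, and correctly tracks the constants $\eta=\pi^{-M/2}$ and $\xi=\nu$. Thus the proof is simply: observe $Z$ is a product of $M+N$ independent symmetric VG random variables with the stated parameters, then invoke \eqref{simp} for the PDF, \eqref{cdf1} for the CDF, and \eqref{char1} for the characteristic function.
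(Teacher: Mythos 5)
Your proposal is correct and follows exactly the paper's own route: pair the normals via \cite[Proposition 1.2]{gaunt vg} to get $M$ independent $\mathrm{VG}(0,1/(\sigma_{2k-1}\sigma_{2k}),0,0)$ factors, view each $Y_j$ as $\mathrm{VG}(1/2,\alpha_j,0,0)$, and then specialise (\ref{simp}), (\ref{cdf1}) and (\ref{char1}) with $m_1=\cdots=m_M=0$, $m_{M+1}=\cdots=m_{M+N}=1/2$ and $\xi=\nu$. Your explicit bookkeeping of $\eta=\pi^{-M/2}$ and the merged zero entries is accurate and, if anything, slightly more detailed than the paper's one-line proof.
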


\begin{proof} To obtain the formulas for the PDF, CDF and characteristic function, we apply formulas (\ref{simp}), (\ref{cdf1}) and (\ref{char1}), respectively, with
%$\beta_1=\cdots=\beta_{M+N}=0$, 
$m_1=\cdots=m_M=0$, $m_{M+1}=\cdots=m_{M+N}=1/2$ and $\xi=\nu$.
\end{proof}

\subsection{Product of correlated zero mean normal random variables}\label{sec3.3}

Consider a $2N$-dimensional multivariate normal random vector $(X_1,\ldots,X_{2N})^\intercal$ with zero mean vector and block diagonal covariance matrix $\Sigma=\mathrm{diag}(\Sigma_1,\Sigma_2,\ldots,\Sigma_N)$, where 
$\Sigma_i$ is a $2\times 2$ matrix given by
\begin{equation*}
\Sigma_i=\left[\begin{aligned} &\quad\,\sigma_{2i-1}^2  & \rho_i \sigma_{2i-1} \sigma_{2i} 
\\ &\rho_i \sigma_{2i-1} \sigma_{2i}   &\sigma_{2i}^2 \quad\,\,\,
\end{aligned} \right], \quad i=1,\ldots,N,    
\end{equation*}
and $\rho_1,\ldots,\rho_N\in(-1,1)$ are the correlation coefficients $\rho_i=\mathrm{Cov}(X_{2i-1},X_{2i})/(\sigma_{2i-1} \sigma_{2i})$. In this section, we apply Theorem \ref{thm1} to write down an exact formula for the PDF of the product $Z=\prod_{i=1}^{2N}X_i$. To this end, we note that the product $X_{2i-1}X_{2i}$ has a VG distribution:
\begin{equation}\label{vgrep}X_{2i-1}X_{2i}\sim\mathrm{VG}\bigg(0,\frac{1}{\sigma_{2i-1}\sigma_{2i}(1-\rho_i^2)},\frac{\rho_i}{\sigma_{2i-1}\sigma_{2i}(1-\rho_i^2)},0\bigg), \quad i=1,\ldots,N
\end{equation}
(see \cite[Theorem 1]{gaunt prod}). We remark that starting with the work of \cite{craig,wb32}, the distribution of the product of two correlated zero mean normal random variables 
has received considerable attention in the statistics literature (for an overview see \cite{gaunt22,np16}). On combining formula (\ref{eq:1}) for the VG product PDF with (\ref{vgrep}), we obtain the following formula for the PDF of $Z=\prod_{i=1}^{2N}X_i$.

\begin{corollary}Let the above notations hold, and set $s=\prod_{i=1}^{2N}\sigma_i$ and $\tau=\prod_{i=1}^N(1-\rho_i^2)$. Then
\begin{align}
		f_Z(z) & = \frac{1}{2^{2N-1}\pi^{N}s\sqrt{\tau}}
		\sum^\infty_{j_1=0} \ldots \sum^\infty_{j_N=0} \frac{(2\rho_{1})^{j_1}}{ j_1! } \ldots \frac{(2\rho_N)^{j_N} }{j_N!} a_{j_1,\ldots,j_N}(z) \nonumber
		\\ & \quad \times G^{2N,0}_{0,2N} \bigg( \frac{z^{2}}{4^Ns^2\tau^2}\, \bigg|\, { - \atop   \tfrac{j_1}{2},\ldots, \tfrac{j_N}{2}, \tfrac{j_1}{2}, \ldots, \tfrac{j_N}{2} }   \bigg), \quad z\in\mathbb{R}. \label{9pdf}
	\end{align}
\end{corollary}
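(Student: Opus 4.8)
The plan is to derive the formula as a direct specialisation of Theorem \ref{thm1}. First I would observe that, by \cite[Theorem 1]{gaunt prod} as recorded in (\ref{vgrep}), each pairwise product $W_i:=X_{2i-1}X_{2i}$ is $\mathrm{VG}(0,\alpha_i,\beta_i,0)$ distributed with parameters $\alpha_i=1/(\sigma_{2i-1}\sigma_{2i}(1-\rho_i^2))$ and $\beta_i=\rho_i/(\sigma_{2i-1}\sigma_{2i}(1-\rho_i^2))$, and that the block-diagonal structure of $\Sigma$ makes $W_1,\ldots,W_N$ mutually independent. Hence $Z=\prod_{i=1}^{2N}X_i=\prod_{i=1}^N W_i$ is a product of $N$ independent VG random variables, so Theorem \ref{thm1} applies verbatim with $m_i=0$ for all $i$.

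Next I would carry out the bookkeeping of substituting these parameter values into (\ref{eq:1}). The key quantities are: $\gamma_i^2=\alpha_i^2-\beta_i^2=(1-\rho_i^2)/(\sigma_{2i-1}\sigma_{2i}(1-\rho_i^2))^2=1/(\sigma_{2i-1}^2\sigma_{2i}^2(1-\rho_i^2))$, so $\gamma_i^{2m_i+1}/\alpha_i^{2m_i}=\gamma_i$ (since $m_i=0$), giving $\prod_{i=1}^N\gamma_i = \prod_{i=1}^N 1/(\sigma_{2i-1}\sigma_{2i}\sqrt{1-\rho_i^2}) = 1/(s\sqrt\tau)$ with $s=\prod_{i=1}^{2N}\sigma_i$ and $\tau=\prod_{i=1}^N(1-\rho_i^2)$; the ratio $2\beta_i/\alpha_i=2\rho_i$; the constant $\eta=1/\prod_{i=1}^N\Gamma(1/2)=\pi^{-N/2}$; and $\xi=\prod_{i=1}^N\alpha_i = 1/(s\tau)$, so $\xi^2/4^N = 1/(4^N s^2\tau^2)$. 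Collecting the prefactor $\eta/(2^{2N-1}\pi^{N/2})\cdot\prod\gamma_i = \pi^{-N/2}/(2^{2N-1}\pi^{N/2}s\sqrt\tau)=1/(2^{2N-1}\pi^N s\sqrt\tau)$, and noting that the $G$-function parameter list $m_i+j_i/2=j_i/2$ since $m_i=0$, yields exactly (\ref{9pdf}). The functions $a_{j_1,\ldots,j_N}(z)$ are unchanged as they do not depend on the VG parameters.

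This is essentially a substitution exercise, so there is no serious obstacle; the only point requiring a line of justification is the mutual independence of $W_1,\ldots,W_N$, which follows since the joint density of $(X_1,\ldots,X_{2N})$ factorises across the blocks $(X_{2i-1},X_{2i})$ by the block-diagonal form of $\Sigma$, and hence the functions $W_i$ of disjoint coordinate blocks are independent. I would state this briefly and then present the parameter substitution, so the whole proof reduces to: invoke (\ref{vgrep}) and block independence, then apply Theorem \ref{thm1} with $m_1=\cdots=m_N=0$ and simplify the constants as above.
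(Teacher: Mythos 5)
Your proposal is correct and coincides with the paper's (implicit) argument, which simply combines the VG representation (\ref{vgrep}) of each pairwise product with Theorem \ref{thm1} at $m_1=\cdots=m_N=0$; your parameter bookkeeping ($\gamma_i=1/(\sigma_{2i-1}\sigma_{2i}\sqrt{1-\rho_i^2})$, $2\beta_i/\alpha_i=2\rho_i$, $\eta=\pi^{-N/2}$, $\xi=1/(s\tau)$) checks out and reproduces (\ref{9pdf}) exactly. The only addition over the paper is your explicit remark on the mutual independence of the blocks, which is a worthwhile clarification but not a different route.
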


%\begin{proof} 
%\end{proof}

\begin{remark} Note that the PDF (\ref{9pdf}) simplifies to a single Meijer $G$-function if  $\rho_1=\cdots=\rho_N=0$, in which case we recover the formula of \cite{product normal} for the product of $2N$ independent zero mean normal random variables. Similarly, if $N-k$ of the correlation coefficients are equal to zero then the PDF (\ref{eq:1}) reduces to a $k$-fold infinite series.

There is interest in developing distributional theory for the product of three or more correlated zero mean normal random variables (see the review for an overview \cite{o25}); however, to date few explicit formulas are known for these distributions. Formula (\ref{9pdf}) is one of the first such explicit formulas, treating the case of a block diagonal covariance matrix, and we hope the result inspires further research into the study of the exact distribution of the product of more than two correlated zero mean normal random variables.
\end{remark}

\appendix
\begin{comment}
\section{Basic properties of the variance-gamma distribution}\label{appvg}

In this appendix, we collect some basic properties of the VG distribution that are relevant to this paper. Let $X\sim \mathrm{VG}(m,\alpha,\beta,0)$, where $m > -1/2$, $0\leq|\beta|<\alpha$. 
The VG PDF (\ref{vgpdf}) has the following asymptotic behaviour: as $x\rightarrow0$,
\begin{equation}\label{pzero}f_X(x)\sim\begin{cases}\displaystyle \frac{\gamma^{2m+1}}{2\alpha^{2m}\sqrt{\pi}}\frac{\Gamma(m)}{\Gamma(m+1/2)}, &  m>0, \\
		\displaystyle -\frac{\gamma}{\pi}\log|x|, &  m=0, \\
		\displaystyle \frac{\gamma^{2m+1}}{2^{2m+1}\sqrt{\pi}}\frac{\Gamma(-m)}{\Gamma(m+1/2)}|x|^{2m}, &  -1/2<m<0 \end{cases}
\end{equation}
(see \cite{gaunt thesis}), and, as $|x|\rightarrow\infty$,
\begin{equation}\label{pinfty}f_X(x)\sim \frac{\gamma^{2m+1}}{(2\alpha)^{m+1/2}\Gamma(m+1/2)}|x|^{m-1/2}\mathrm{e}^{-\alpha|x|+\beta x}
\end{equation}
(see \cite{gaunt vg3}). 
\end{comment}
\section{The Meijer $G$-function}\label{appa}
Here, we define the Meijer $G$-function, 
and present some of its relevant basic properties, all of which can be found in
%that are needed in this paper. 
%Unless  otherwise stated, 
%These and further properties can be found in the standard references 
\cite{
%gradshetyn,
luke,olver}. 

%\subsection{Modified Bessel function of the second kind}

%\noindent The integral involving the modified Bessel function of the second kind has the following form:
%\begin{align}
	%\int^\infty_0 t^{\mu-1} {K}_{\nu} (t)\, \mathrm{d}t = 2^{\mu-2} \Gamma \bigg( \frac{\mu-\nu}{2} \bigg) \Gamma \bigg( \frac{\mu-\nu}{2} \bigg) \label{eq:32}  
%\end{align}
%for $|\mathrm{Re} (\nu)| < \mathrm{Re}(\mu).$ 

%\subsection{Hypergeometric and Meijer $G$-functions}\label{B2}

The \emph{Meijer $G$-function} is defined, for $x\in\mathbb{R}$, by the contour integral
\begin{equation}\label{mdef}G^{m,n}_{p,q}\bigg(x \, \bigg|\, {a_1,\ldots, a_p \atop b_1,\ldots,b_q} \bigg)=\frac{1}{2\pi \mathrm{i}}\int_L\frac{\prod_{j=1}^m\Gamma(b_j-s)\prod_{j=1}^n\Gamma(1-a_j+s)}{\prod_{j=n+1}^p\Gamma(a_j-s)\prod_{j=m+1}^q\Gamma(1-b_j+s)}x^s\,\mathrm{d}s,
\end{equation}
where the path $L$ separates the poles of the gamma functions $\Gamma(b_j-s)$ from the poles of the gamma functions $\Gamma(1-a_j+s)$, and we employ the convention that the empty product is $1$.

The Meijer $G$-function satisfies the identities
\begin{align}\label{lukeformula}G_{p,q}^{m,n}\bigg(x \; \bigg| \;{a_1,\ldots,a_{p-1},b_1 \atop b_1,\ldots,b_q}\bigg)&=G_{p-1,q-1}^{m-1,n}\bigg(x \; \bigg| \;{a_1,\ldots,a_{p-1} \atop b_2,\ldots,b_q}\bigg), \quad m,p,q\geq 1, \\
\label{meijergidentity}x^\alpha G_{p,q}^{m,n}\bigg(x \; \bigg| \;{a_1,\ldots,a_p \atop b_1,\ldots,b_q}\bigg)&=G_{p,q}^{m,n}\bigg(x \, \bigg| \,{a_1+\alpha,\ldots,a_p+\alpha \atop b_1+\alpha,\ldots,b_q+\alpha}\bigg), \\
G^{m,n}_{p,q}\bigg(x \; \bigg|\; {a_1,\ldots, a_p \atop b_1,\ldots,b_q} \bigg)&=G^{n,m}_{q,p}\bigg(x^{-1} \; \bigg|\; {1-b_1,\ldots, 1-b_q \atop 1-a_1,\ldots,1-a_p} \bigg). \label{ginv}
\end{align}
%The modified Bessel function of the second kind is a special case of the Meijer $G$-function:
%\begin{equation}
%\label{km} K_\nu(x)=\frac{1}{2}G^{2,0}_{0,2} \bigg( \frac{x^2}{4}\, \bigg| \,{ - \atop \frac{\nu}{2},-\frac{\nu}{2}} \bigg).  
%\end{equation}
The Whittaker function arises as a special case of the Meijer $G$-function:
\begin{align}
\label{wmfor}G^{2,1}_{1,2} \bigg( x\, \bigg|\, { a \atop b,c } \bigg)&=\Gamma(b-a+1)\Gamma(c-a+1)x^{(b+c-1)/2}\mathrm{e}^{x/2}W_{\frac{1}{2}(2a-b-c-1),\frac{1}{2}(b-c)}(x).  
\end{align}
On combining equations 5.4(1) and 5.4(13) of \cite{luke}, we obtain the indefinite integral formula
\begin{equation}\label{mint}\int x^{\alpha-1}G_{p,q}^{m,n}\bigg(x \; \bigg| \;{a_1,\ldots,a_p \atop b_1,\ldots,b_q}\bigg)\,\mathrm{d}x=x^\alpha G_{p+1,q+1}^{m,n+1}\bigg(x \, \bigg| \,{1-\alpha,a_1,\ldots,a_p \atop b_1,\ldots,b_q,-\alpha}\bigg).
\end{equation}
Setting $\alpha=1$ in (\ref{mint}) and applying (\ref{meijergidentity}) yields 
\begin{equation}\label{mint1}\int G_{p,q}^{m,n}\bigg(x \; \bigg| \;{a_1,\ldots,a_p \atop b_1,\ldots,b_q}\bigg)\,\mathrm{d}x= G_{p+1,q+1}^{m,n+1}\bigg(x \, \bigg| \,{1,a_1+1,\ldots,a_p+1 \atop b_1+1,\ldots,b_m+1,0,b_{m+1}+1,\ldots, b_q+1}\bigg).
\end{equation}
The following limiting form is given in \cite[Section 5.7, Theorem 5]{luke}. For $x>0$,
\begin{equation}\label{asymg}G^{q,0}_{p,q}\bigg(x \; \bigg|\; {a_1,\ldots, a_p \atop b_1,\ldots,b_q} \bigg)\sim \frac{(2\pi)^{(\sigma-1)/2}}{\sigma^{1/2}}x^\theta \exp\big(-\sigma x^{1/\sigma}\big), \quad \text{as $x\rightarrow\infty$,}
\end{equation}
where $\sigma=q-p$ and $\theta=\sigma^{-1}\{(1-\sigma)/2+\sum_{i=1}^qb_i-\sum_{i=1}^pa_i\}$.
%\begin{equation*}\theta=\frac{1}{\sigma}\bigg\{\frac{1-\sigma}{2}+\sum_{i=1}^qb_i-\sum_{i=1}^pa_i\bigg\}.
%\end{equation*}

%\section*{Acknowledgements}

%\section*{Statements and Declarations}
%There are no competing interests. No funding was received to assist with the preparation of this manuscript.

\footnotesize

\normalsize


\begin{thebibliography}{99}
	\addcontentsline{toc}{section}{References}

    \bibitem{ad11} Arendarczyk, M. and D\c{e}bicki, K. Asymptotics of supremum distribution of a Gaussian process over a Weibullian time. \emph{Bernoulli} $\mathbf{17}$ (2011), 194--210.

    \bibitem{craig} Craig, C. C. On the Frequency Function of $xy$. \emph{Ann. Math. Stat.} $\mathbf{7}$ (1936), 1--15.
	
	\bibitem{e48} Epstein, B. Some applications of the Mellin transform in statistics. \emph{Ann. Math. Stat.} $\mathbf{19}$ (1948), 370--379.
	
	\bibitem{vg review} Fischer, A., Gaunt, R. E. and Sarantsev, A. The Variance-Gamma Distribution: A Review. \emph{Stat. Sci.} $\mathbf{40}$ (2025), 235--258.
	
%	\bibitem{gaunt thesis} \textcolor{blue}{ Gaunt, R. E.  \emph{Rates of Convergence of Variance-Gamma Approximations via Stein's Method.}  DPhil thesis, University of Oxford, 2013.}
	
	%\bibitem{gaunt vg3}  \textcolor{blue}{ Gaunt, R. E. Stein factors for variance-gamma approximation in the Wasserstein and Kolmogorov distances. \emph{J. Math. Anal. Appl.} $\mathbf{514}$ (2022), Art.\ 126274. }
	
	\bibitem{gaunt vg} Gaunt, R. E.  Variance-Gamma approximation via Stein's method.  \emph{Electron. J. Probab.} $\mathbf{19}$ no. 38 (2014).

    \bibitem{gaunt prod} Gaunt, R. E. A note on the distribution of the product of zero mean correlated normal random variables. \emph{Stat. Neerl.} $\mathbf{73}$ (2019),  176--179.

    \bibitem{gaunt22} Gaunt, R. E. The basic distributional theory for the product of zero mean correlated normal random variables.  \emph{Stat. Neerl.} $\mathbf{76}$ (2022), 450--470.
	
	\bibitem{gaunt on} Gaunt, R.E. On the cumulative distribution function of the variance-gamma distribution. \emph{Bull. Aust. Math. Soc.} $\mathbf{110}$ (2024), 389--397.
	
	%\bibitem{gl23} Gaunt, R. E. and Li, S. The variance-gamma ratio distribution. \emph{C. R. Math.} $\mathbf{361}$ (2023), 1151--1161.
	
	\bibitem{gl24} Gaunt, R. E. and Li, S. The distribution of the product of independent variance-gamma random variables.  \emph{J. Math. Anal. Appl.} $\mathbf{539}$ (2024), Art. 128530.
	
	\bibitem{gaunt algebra} Gaunt, R. E., Mijoule, G. and Swan, Y. An algebra of Stein operators. \emph{J. Math. Anal. Appl.} $\mathbf{469}$ (2019), 260--279.
	
	%\bibitem{gradshetyn}   Gradshetyn, I. S. and Ryzhik, I. M.  \emph{Table of Integrals, Series and Products,}  $7$th ed.  Academic Press, 2007.
	
	\bibitem{ha04} Holm, H. and Alouini, M.--S. Sum and Difference of two squared correlated Nakagami variates with the McKay distribution. \emph{IEEE T. Commun.} $\mathbf{52}$ (2004),  1367--1376.
	
	\bibitem{kkp01} Kotz, S., Kozubowski, T. J. and Podg\'{o}rski, K. \emph{The Laplace Distribution and Generalizations: A Revisit with New Applications.} Springer, 2001. 
	
	\bibitem{ks69} Kotz, S. and Srinivasan, R. Distribution of product and quotient of Bessel function variates. \emph{Ann. I. Stat. Math.} $\mathbf{21}$ (1969), 201--210.
	
	\bibitem{l67} Lomnicki, Z. A. On the distribution of products of random variables. \emph{J. Roy. Stat. Soc. B} $\mathbf{29}$ (1967), 513--524.
	
	\bibitem{luke} Luke, Y. L. \emph{The Special Functions and Their Approximations}, Vol. 1. New York: Academic Press, 1969. 
	
	\bibitem{mcc98} Madan, D. B., Carr, P. and Chang, E. C.  The variance gamma process and option pricing. \emph{Eur. Finance Rev.} $\mathbf{2}$ (1998),  74--105.
	
	\bibitem{madan} Madan, D. B. and Seneta, E. The Variance Gamma (V.G.) Model for Share Market Returns.  \emph{J. Bus.} $\mathbf{63}$ (1990),  511--524.
	
	\bibitem{m32} McKay, A. T. A Bessel function distribution. \emph{Biometrika} $\mathbf{24}$ (1932),  39--44.
	
	\bibitem{nad07} Nadarajah, S. The linear combination, product and ratio of Laplace random variables. \emph{Statistics} $\mathbf{41}$ (2007), 535--545.
	
	%\bibitem{nk06} Nadarajah, S. and Kotz, S. The Bessel ratio distribution. \emph{C. R. Acad. Sci. Paris, Ser. I} $\mathbf{343}$ (2006),  531--534.

    \bibitem{np16} Nadarajah, S. and Pog\'{a}ny, T. K. On the distribution of the product of correlated normal random variables. \emph{C. R. Acad. Sci. Paris, Ser. I} $\mathbf{354}$ (2016),  201--204.

    \bibitem{o25} Ogasawara, H. The product distribution of more-than-two correlated normal variables: A didactic review with some new findings. To appear in \emph{Commun. Stat. Theory}, 2025+.
	
	\bibitem{olver} Olver, F. W. J., Lozier, D. W., Boisvert, R. F. and Clark, C. W.  \emph{NIST Handbook of Mathematical Functions.} Cambridge University Press, 2010.
	
	%\bibitem{prud2} Prudnikov, A. B., Brychkov Y. A. and Marichev, O. I. \emph{Integrals and Series. Volume 3. More Special Functions.} Gordon and Breach Science Publishers, New York, 1990.
	
	%\bibitem{prud} Prudnikov, A. B., Brychkov Y. A. and Marichev, O. I. \emph{Integrals and Series. Volume 2. Special Functions.} Gordon and Breach Science Publishers, New York, 1992.
	
	\bibitem{s79} Springer, M. D. \emph{The Algebra of Random Variables.} John Wiley \& Sons Inc, 1979.
	
	\bibitem{st66} Springer, M. D. and Thompson, W. E. The distribution of products of independent random variables. \emph{SIAM. J. Appl. Math.} $\mathbf{14}$ (1966), 511-526.
	
	\bibitem{product normal} Springer, M. D. and Thompson, W. E. The distribution of products of beta, gamma and Gaussian random variables. \emph{SIAM. J. Appl. Math.} $\mathbf{18}$ (1970), 721--737. 
	
	%\bibitem{s17} Stojanac, Z., Suess, D. and Kliesch, M. On the distribution of a product of $N$ Gaussian random variables. \emph{Proc. SPIE 10394, Wavelets and Sparsity XVII} (2017), 1039419.
	
	\bibitem{wells} Wells, W. T., Anderson, R. L. and Cell, J. W. The distribution of the product of two central or non-central chi-square variates. \emph{Ann. Math. Stat.} $\mathbf{33}$ (1962), 1016--1020.

    \bibitem{wb32} Wishart, J. and Bartlett, M. S. The distribution of second order moment statistics in a normal system. \emph{Math. Proc. Cambridge} $\mathbf{28}$ (1932), 455--459.
	
\end{thebibliography}
\end{document}